\title{Jumps in speeds of hereditary properties in finite relational languages}
\author{Michael C. Laskowski}\thanks{The first author was partially supported
by NSF grants DMS-1308546 and DMS-1855789. The second author was partially supported by NSF grant DMS-1855711.}
\author{Caroline A. Terry}
\address{Department of Mathematics, University of Maryland, College Park,
MD 20742, USA}
\email{mcl@math.umd.edu}
\address{Department of Mathematics, The Ohio State University, Columbus,
OH 43210, USA}
\email{terry.376@osu.edu}
\def\showauthornotes{1}
\newcommand{\Authornote}[2]{{\sf\small\color{magenta}{[#1: #2]}}}
\newcommand{\Authornote}[2]{}
\def\abar{\overline{a}}
\def\bbar{\overline{b}}
\def\cbar{\overline{c}}
\def\dbar{\overline{d}}
\def\ebar{\overline{e}}
\def\hbar{\overline{h}}
\def\wbar{\overline{w}}
\def\xbar{\overline{x}}
\def\ybar{\overline{y}}
\def\zbar{\overline{z}}
\def\mbar{\overline{m}}
\def\phi{\varphi}
\def\calL{{\mathcal{L}}}
\def\calH{{\mathcal{H}}}
\def\calM{{\mathcal{M}}}
\def\calN{{\mathcal{N}}}
\def\calP{{\mathcal{P}}}
\def\calG{{\mathcal{G}}}
\def\calQ{{\mathcal{Q}}}
\def\calS{{\mathcal{S}}}
\def\calB{{\mathcal{B}}}
\def\calF{{\mathcal{F}}}
\def\calU{{\mathcal{U}}}
\def\calW{{\mathcal{W}}}
\def\FF{{\bf F}}
\def\Q{{\mathbb Q}}
\def\tp{{\rm tp}}
\def\qftp{{\rm qftp}}
\def\Fa0{{\FF^a_{\aleph_0}}}
\def\<{\langle}
\def\>{\rangle}
\def\o2{{^{\omega} 2}}
\def\n2{{^{n} 2}}
\def\tildeH{\widetilde{{\mathcal H}}}
\def\QMA{{\rm QMA}}
\def\Supp{{\rm Supp}}
\def\pp{{\bf p}}
\def\qq{{\bf q}}
\newtheorem{Theorem}{Theorem}[section]
\newtheorem{Proposition}[Theorem]{Proposition}
\newtheorem{Definition}[Theorem]{Definition}
\newtheorem{Remark}[Theorem]{Remark}
\newtheorem{Example}[Theorem]{Example}
\newtheorem{Lemma}[Theorem]{Lemma}
\newtheorem{Corollary}[Theorem]{Corollary}
\newtheorem{Fact}[Theorem]{Fact}
\newtheorem{Question}[Theorem]{Question}
\newtheorem{Problem}[Theorem]{Problem}
\newtheorem*{Definition4.2'}{Definition \ref{def:mam}'}
\def\lg{{\rm lg}}
\def\xhat{{\hat{x}}}
\newcommand\myrestriction{\mathord\restriction}
\def\mr#1{\myrestriction_{#1}}
\begin{document}

\date{February 3, 2022}

\begin{abstract}  
Given a finite relational language $\calL$, a \emph{hereditary $\calL$-property} is a class $\calH$ of finite $\calL$-structures closed under isomorphism and substructure.  The \emph{speed of $\calH$} is the function which sends an integer $n\geq 1$ to the number of distinct elements in $\calH$ with underlying set $\{1, . . . , n\}$.  In this paper we give a description of many new jumps in the possible speeds of a hereditary $\calL$-property, where $\calL$ is any finite relational language. In particular, we characterize the jumps in the polynomial and factorial ranges, and show they are essentially the same as in the case of graphs. The results in the factorial range are new for all examples requiring a language of arity greater than two, including the setting of hereditary properties of $k$-uniform hypergraphs for $k>2$. Further, adapting an example of Balogh, Bollob\'{a}s, and Weinreich, we show that for all $k\geq 2$, there are hereditary properties of $k$-uniform hypergraphs whose speeds oscillate between functions near the upper and lower bounds of the penultimate range, ruling out many natural functions as jumps in that range.  Our theorems about the factorial range use model theoretic tools related to the notion of mutual algebraicity.
\end{abstract}

\maketitle

\section{introduction}

A hereditary graph property, $\calH$, is a class of finite graphs which is closed under isomorphisms and induced subgraphs.  The \emph{speed} of $\calH$ is the function which sends a positive integer $n$ to $|\calH_n|$, where $\calH_n$ is the set of elements of $\calH$ with vertex set $[n]$.  Not just any function can occur as the speed of hereditary graph property.  Specifically, there are discrete ``jumps'' in the possible speeds. Study of these jumps began with work of Scheinerman and Zito in the 90's \cite{ZitoSch}, and culminated in a series of
papers from the 2000's by Balogh, Bollob\'{a}s, and Weinreich, which gave an almost complete picture of the jumps for hereditary graph properties. These results are summarized in the following theorem.

\begin{Theorem}[\cite{ABBM,BBW1, BBW2, ajump,BoTh2}]\label{thm:graphcase}
Suppose $\calH$ is a hereditary graph property.  Then one of the following holds.
\begin{enumerate}
\item There are $k\in \mathbb{N}$ and rational polynomials $p_1(x),\ldots, p_k(x)$ such that for sufficiently large $n$, $|\calH_n|=\sum_{i=1}^k p_i(n)i^n$.
\item There is an integer $k\geq 2$ such that $|\calH_n|=n^{(1-\frac{1}{k}+o(1))n}$.
\item There is an $\epsilon>0$ such that for sufficiently large $n$, $\mathcal{B}_n \leq |\calH_n|\leq 2^{n^{2-\epsilon}}$, where $\mathcal{B}_n\sim (n/\log n)^n$ denotes the $n$-th Bell number.
\item There is an integer $k\geq 2$ such that $|\calH_n|=2^{(1-\frac{1}{k}+o(1))n^2/2}$.
\end{enumerate}
\end{Theorem}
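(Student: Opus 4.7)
The plan is to establish the four regimes by proving three successive ``jumps'': between polynomial and factorial growth, between factorial and the penultimate range beginning at the Bell number $\mathcal{B}_n$, and between penultimate and exponential $2^{\Omega(n^2)}$; and then to characterize which speeds are actually attained inside each regime.

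For the polynomial-to-factorial jump (following Scheinerman-Zito), I would argue by contrapositive: if the speed is not eventually of the form $\sum_{i=1}^k p_i(n) i^n$, then one can extract an unbounded sequence of pairwise inequivalent one-vertex extensions over some fixed graph in $\calH$, and via a Ramsey/amalgamation argument produce at least $n!/C^n$ distinct graphs in $\calH_n$, which already exceeds any polynomial-exponential expression. Conversely, the exact form in case (1) follows from a finite-type analysis, bounding the number of orbits of vertices over each finite substructure and assembling the terms $p_i(n)i^n$ by inclusion-exclusion over how new vertices extend these substructures.

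For the factorial range (Balogh-Bollob\'as-Weinreich), I would introduce template classes $\calH^*_k$ built from vertex-partitions into $k$ ``homogeneous'' parts, each with speed exactly $n^{(1-1/k+o(1))n}$. Every $\calH$ with factorial speed is then shown to be sandwiched between $\calH^*_k$ and $\calH^*_{k+1}$ for some $k\geq 2$, yielding the claimed asymptotic. The factorial-to-penultimate gap is the assertion that if $|\calH_n|$ exceeds $n^{(1-1/k+o(1))n}$ for every $k$, then already $|\calH_n|\geq \mathcal{B}_n$; I would establish this by an extremal analysis of equivalence-relation-like substructures inside $\calH$, showing that unbounded ``partition complexity'' forces Bell-number-many distinct realizations.

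For the exponential range, I would invoke Szemer\'edi's regularity lemma together with the Alekseev-Bollob\'as-Thomason characterization: every large $G\in\calH$ admits a regular partition into $k$ parts, each essentially homogeneous, yielding $\log_2|\calH_n| = (1 - 1/k + o(1)) n^2/2$ where $k$ is the maximal coloring number realizable by colored graphs embeddable into members of $\calH$. The part I expect to be hardest is the penultimate range itself: establishing the lower bound $|\calH_n|\geq \mathcal{B}_n$ immediately above the factorial range requires a careful partition/coloring construction, and the upper bound $|\calH_n|\leq 2^{n^{2-\epsilon}}$ separating this range from the exponential one is the deepest application of regularity, because the width of the penultimate range (from near $(n/\log n)^n$ all the way up to $2^{n^{2-\epsilon}}$) admits no intermediate jump and must be handled by a direct structural dichotomy rather than a template-class argument.
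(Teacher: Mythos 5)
The paper does not prove Theorem~\ref{thm:graphcase}; it is stated as a survey of results from \cite{ABBM,BBW1,BBW2,ajump,BoTh2} and used as motivation for the generalization to arbitrary finite relational languages that the paper actually proves (Theorems~\ref{thm:mainthm1} and~\ref{thm:mainthm}). So there is no ``paper's own proof'' to compare your sketch against. What I can do is check your sketch against the closest thing available: the authors' generalization of cases (1)--(3) in Sections~\ref{sec:eu}--\ref{sec:ma}, which specializes to graphs.

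Your outline of the polynomial/exponential case is close in spirit but imprecise on the key object. The dichotomy is not governed by ``orbits of vertices over each finite substructure'' but by the equivalence relation $\sim$ of Definition~\ref{def:eq}: two elements are equivalent when they have the same quantifier-free type over the complement. Case (1) occurs exactly when the number of $\sim$-classes is uniformly bounded across models (the property is \emph{basic}), and the exact form $\sum p_i(n)i^n$ comes from reducing to finitely many infinite templates plus a multinomial-counting lemma (Lemma~\ref{lem:polcount0}); the inclusion-exclusion is over these finitely many templates, not over vertex extensions. For the factorial range the dividing line is mutual algebraicity (Section~\ref{sec:ma}), which reduces the counting to components in totally bounded structures; the parameter $k$ in $n^{(1-1/k+o(1))n}$ is the largest component size occurring infinitely often, not a sandwiching index between template classes.

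On the Bell-number jump, ``unbounded partition complexity forces Bell-number-many realizations'' elides the real content of \cite{ajump}: one proves a rigidity/minimality theorem, namely that any property escaping the factorial range must contain all disjoint unions of cliques or all disjoint unions of anti-cliques as a subproperty, and $\mathcal{B}_n$ appears because these are in bijection with set partitions of $[n]$. A direct lower-bound construction without the structural characterization would not establish the jump to $\mathcal{B}_n$ rather than to some other superfactorial function. Finally, the $2^{n^{2-\epsilon}}$ upper bound separating the penultimate range from the top range is not obtained here (or, in the general setting, in \cite{meVCcount}) from Szemer\'edi regularity but from VC-dimension counting; regularity enters only in the analysis of the top range and the Tur\'an-type densities governing it.
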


The jumps from (1) to (2) and within (2) are from \cite{BBW1}, the jump from (2) to (3) is from  \cite{BBW1, ajump}, the jump from (3) to (4) is from \cite{ABBM, BoTh2}, and the jumps within (4) are from \cite{BoTh2}.  Moreover, in \cite{BBW2}, Balogh, Bollob\'{a}s, and Weinreich showed that there exist hereditary graph properties whose speeds oscillate between functions near the lower and upper bound of range (3), which rules out most ``natural'' functions as possible jumps in that range.  Further, structural characterizations of the properties in ranges (1), (2) and (4) are given in \cite{BBW1} (ranges (1) and (2)) and \cite{BoTh2} (range (4)).

Despite the detailed understanding Theorem \ref{thm:graphcase} gives us about jumps in speeds of hereditary graph properties, relatively little was known about the jumps in speeds of hereditary properties of higher arity hypergraphs.  The goal of this paper is to generalize new aspects of Theorem \ref{thm:graphcase} to the setting of hereditary properties in arbitrary finite languages consisting of relations and/or constant symbols (we call such a language \emph{finite relational}). Specifically, we consider \emph{hereditary $\calL$-properties}, where, given a finite relational language $\calL$, a hereditary $\calL$-property is a class of finite $\calL$-structures closed under isomorphisms and substructures.  This notion encompasses most of the hereditary properties studied in the combinatorics literature\footnote{Notable exceptions include hereditary properties of permutations and non-uniform hypergraphs.}, including for example, hereditary properties of posets, of linear orders, and of $k$-uniform hypergraphs (ordered or unordered) for any $k\geq 2$.  We now summarize what was previously known about generalizing Theorem \ref{thm:graphcase} to hereditary properties of $\calL$-structures.

\begin{Theorem}[\cite{Alekseev1, BoTh1, meVCcount,TERRY2018}]\label{thm:hgcase}
Suppose $\calL$ is a finite relational language of arity $r\geq 1$ and $\calH$ is a hereditary $\calL$-property.  Then one of the following holds.
\begin{enumerate}[(i)]
\item There are constants $C, k\in \mathbb{N}^{>0}$ such that for sufficiently large $n$, $|\calH_n|\leq Cn^k$.\label{polgenweak}
\item There are constants $C,\epsilon>0$ such that for sufficiently large $n$, $2^{Cn}\leq |\calH_n|\leq 2^{n^{r-\epsilon}}$. \label{facgenweak}
\item There is a constant $C>0$ such that $|\calH_n|=2^{C{n\choose r}+o(n^r)}$.\label{topgenweak}
\end{enumerate}
\end{Theorem}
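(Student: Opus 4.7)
The plan is to derive the trichotomy by combining the four cited works, each of which handles one of the two ``jumps'' the theorem asserts. Conceptually, I would associate to $\calH$ two parameters: a local-complexity (VC-style) parameter controlling the polynomial-versus-exponential dichotomy, and an Alekseev-type coloring parameter controlling the sub-top-versus-top dichotomy. The three cases (\ref{polgenweak})--(\ref{topgenweak}) will then correspond to the three feasible combinations of these parameters.

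For the jump between (\ref{polgenweak}) and (\ref{facgenweak}), I would invoke the VC-counting results of the first author in \cite{meVCcount}. The key point is a Sauer-Shelah-type dichotomy for traces of quantifier-free definable sets in $\calL$-structures: either these traces grow polynomially in $n$, which forces the number of quantifier-free isomorphism types on $[n]$, and hence $|\calH_n|$, to be polynomially bounded, or one can exhibit an independent family allowing the construction of $2^{Cn}$ pairwise non-isomorphic members of $\calH_n$. This produces either the upper bound in (\ref{polgenweak}) or the lower bound $2^{Cn}$ appearing in (\ref{facgenweak}).

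For the jump between (\ref{facgenweak}) and (\ref{topgenweak}), I would appeal to the Alekseev-Bollob\'{a}s-Thomason style theorems \cite{Alekseev1, BoTh1} in the classical graph and hypergraph settings, together with their extension to general finite relational languages given by the second author in \cite{TERRY2018}. These results produce a ``coloring number'' $c(\calH)\in[0,1]$ measuring the maximum log-density of a partite blow-up construction lying in $\calH$. If $c(\calH)>0$, the speed equals $2^{c(\calH)\binom{n}{r}+o(n^r)}$, placing $\calH$ in case (\ref{topgenweak}); if $c(\calH)=0$, an upper bound of the form $2^{n^{r-\epsilon}}$ applies for some $\epsilon>0$, placing $\calH$ in case (\ref{facgenweak}) once combined with the lower bound from the previous step.

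The main obstacle is matching conventions across the four sources, which are phrased in different languages (pure graph theory, hypergraph theory, and relational model theory) and at different levels of generality. The crucial bookkeeping task is to verify that \cite{TERRY2018} applies uniformly across languages with symbols of mixed arity; this is typically handled by a standard padding reduction, in which each $s$-ary symbol is replaced by an $r$-ary symbol that is trivial outside its first $s$ coordinates, a change that preserves the qualitative speed class. Once this is done, the trichotomy falls out directly from the union of the cited theorems.
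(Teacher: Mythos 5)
This statement is not proved in the paper at all; it is presented as a survey theorem assembled from prior work, and the paper immediately follows it with an explicit accounting of which cited paper contributes which jump. Your high-level decomposition into two dichotomies (polynomial vs.\ exponential, and sub-top vs.\ top) matches that accounting, and the overall plan of ``cite and combine'' is exactly what the paper does, so there is no independent argument to compare against.

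One inaccuracy in your attribution, however, is worth flagging. You assign the upper bound $2^{n^{r-\epsilon}}$ in case (\ref{facgenweak}) to the Alekseev--Bollob\'as--Thomason line of work and to \cite{TERRY2018}. The paper states the opposite: \cite{Alekseev1, BoTh1} (for $r$-uniform hypergraphs) and \cite{TERRY2018} (for general finite relational languages) establish the \emph{existence} of the jump to the top range $2^{C\binom{n}{r}+o(n^r)}$, but the sharpened form of the gap --- i.e.\ the $2^{n^{r-\epsilon}}$ ceiling on speeds below the top range --- is an improvement that comes from \cite{meVCcount}. Your proposal inverts the roles, presenting the VC-counting paper as responsible only for the polynomial/exponential jump when in fact it also supplies the quantitatively stronger version of the second jump that the theorem ``as stated'' requires. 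The ``padding reduction'' you describe for handling mixed arities is plausible as a folklore device, but you should not assert it as the mechanism in \cite{TERRY2018} without checking that source; nothing in the present paper supports or refutes that claim.
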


The existence of a jump to (\ref{topgenweak}) was first shown for $r$-uniform hypergraphs in  \cite{Alekseev1, BoTh1}, and later for finite relational languages in \cite{TERRY2018}. The jump between (\ref{facgenweak}) and (\ref{topgenweak}) as stated (an improvement from \cite{Alekseev1,BoTh1, TERRY2018}) is from \cite{meVCcount}.  The jump from (\ref{polgenweak}) and (\ref{facgenweak}) for finite relational languages is from \cite{meVCcount} (similar results were also obtained in \cite{FROSU}).  Stronger results, including an additional jump from the exponential to the factorial range, have also been shown in special cases (see \cite{balogh2007hereditary, BALOGH20061263, JGT:JGT20266, KaKl,KLAZAR2007258}). However, to our knowledge, Theorem \ref{thm:graphcase} encompasses all that was known in general, and even in the special case of hereditary properties of $r$-uniform hypergraphs for $r\geq 3$ (unordered).  Our focus in this paper is on the polynomial, exponential, and factorial ranges, where we obtain results analogous to those in Theorem \ref{thm:graphcase} for arbitrary hereditary $\calL$-properties. The arity of $\calL$ is the maximum arity of its relation symbols.  By convention, if $\calL$ consists of constant symbols, we say it has arity $0$.

\begin{Theorem}\label{thm:mainthm1}
Suppose $\calH$ is a hereditary $\calL$-property, where $\calL$ is a finite relational language. Then one of the following hold.
\begin{enumerate}
\item There are $k\in \mathbb{N}$ and rational polynomials $p_1(x),\ldots, p_k(x)$ such that for sufficiently large $n$, $|\calH_n|=\sum_{i=1}^kp_i(n)i^n$.
\item There is an integer $k\geq 2$ such that $|\calH_n|=n^{(1-\frac{1}{k}-o(1))n}$.
\item $|\calH_n|\geq n^{n(1-o(1))}$.
\end{enumerate}
\end{Theorem}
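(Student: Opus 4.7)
The plan is to split the argument according to the wide ranges of Theorem~\ref{thm:hgcase}, then sharpen each range using a mixture of combinatorial enumeration and model-theoretic tools based on mutual algebraicity.

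\textbf{The polynomial range.} Suppose first that $|\calH_n|$ is bounded by a polynomial in $n$. I would show there is a finite list $F$ of finite $\calL$-structures, an integer $k$, and a bounded ``core'' $C$, such that every $H \in \calH$ is, up to isomorphism, obtained by attaching a disjoint union of at most $k$ copies of members of $F$ to $C$ with trivial cross-type. Direct labelled enumeration of such objects on $[n]$ yields exactly a sum $\sum_{i=1}^k p_i(n) i^n$, where the polynomials $p_i(n)$ record the finitely many attachment patterns and automorphism data. This adapts the Balogh--Bollob\'as--Weinreich analysis of the graph case, substituting quantifier-free $\calL$-types for edge/non-edge patterns.

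\textbf{Reduction and the main dichotomy.} If $|\calH_n|$ is not polynomially bounded then by Theorem~\ref{thm:hgcase} we are in range (ii) or (iii), and in particular $|\calH_n| \geq 2^{Cn}$. Introduce a mutual-algebraicity invariant $k_0 = k_0(\calH) \in \{2,3,\ldots\} \cup \{\infty\}$ defined so that $k_0 \geq k$ iff some $H \in \calH$ supports $k$ ``independent'' quantifier-free mutually algebraic relations acting on disjoint sets of variables. The heart of the argument is the dichotomy:
\begin{itemize}
\item If $k_0 = k < \infty$, then every $H \in \calH_n$ is determined, up to $n^{o(n)}$ choices, by a partition of $[n]$ into $k$ blocks plus bounded local data; applying Stirling to partitions of $[n]$ into $k$ roughly equal parts gives both $|\calH_n| \leq n^{(1-1/k+o(1))n}$ and a matching lower bound from an explicit family built from the $k$ independent relations, yielding case~(2).
\item If $k_0 = \infty$, compactness extracts inside $\calH$ arbitrarily large independent systems of mutually algebraic relations, and these can be used to encode every function $f:[n] \to [n]$ as a distinct $\calL$-structure on $[2n]$ lying in $\calH$. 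Counting such functions modulo $S_n$ produces $n^{n(1-o(1))}$ isomorphism types, giving case~(3).
\end{itemize}

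\textbf{Main obstacle.} I expect the hardest step to be the $k_0 = \infty$ case: producing $n^{n(1-o(1))}$ \emph{isomorphism} classes rather than labelled structures. To recover the encoded $f$ from an isomorphism class, the ``skeleton'' carrying the encoding must be rigid, and one must verify that the higher-arity relations of $\calL$ do not generate spurious automorphisms permuting the coded values. This forces a careful construction exploiting the mutual-algebraicity witnesses to guarantee rigidity, and is the place where the argument departs most significantly from the graph case treated in Theorem~\ref{thm:graphcase}.
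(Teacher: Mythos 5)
Your overall strategy—handle the slow range by a structural decomposition and then use a mutual-algebraicity dichotomy for the factorial versus superfactorial ranges—is the right shape, and it matches the paper in broad outline. However, there are several concrete gaps.

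First, your treatment of the slow range is simultaneously too narrow and structurally wrong. You only handle the polynomial-speed case, but case~(1) of the theorem also includes exponential speeds (the sum $\sum p_i(n)i^n$ is asymptotically $p_k(n)k^n$), and Theorem~\ref{thm:hgcase} gives no information between ``polynomial'' and ``$2^{Cn}$.'' More seriously, the decomposition you describe---a bounded core with a disjoint union of copies from a finite list $F$ attached with trivial cross-type---cannot describe even very simple properties that belong in case~(1). For instance, the class of all complete graphs, or of all complete multipartite graphs with a bounded number of parts, has speed in case~(1), but these structures are not disjoint unions of bounded pieces and the cross-types between the relevant blocks are far from trivial. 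The paper's correct notion is the equivalence $a\sim b$ iff $\qftp(ab/M\setminus\{a,b\})=\qftp(ba/M\setminus\{a,b\})$; the number of $\sim$-classes, not components, is what must be uniformly bounded, and the cross-behaviour between classes is a fixed but arbitrary pattern recorded by the sets $\Sigma^{\calM}_{\tau}$.

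Second, the $k_0$ dichotomy is imprecisely defined and, as stated, does not correctly separate cases~(2) and~(3). Mutual algebraicity alone does not place a property in the factorial range: in the paper, a mutually algebraic property is still superfactorial if the totally bounded cover obtained from Lemma~\ref{4.3} (after moving to an expanded language with parameters named as constants) has components of unbounded size. So whatever invariant you use must track component size after this change of language, not just the existence of ``$k$ independent mutually algebraic relations.'' Relatedly, the counting you outline is off: partitioning $[n]$ into $k$ roughly equal blocks gives roughly $k^n$ possibilities, which is exponential; the factorial speed $n^{(1-1/k)n}$ comes from partitioning into $\lfloor n/k\rfloor$ blocks each of size $k$ (Lemma~\ref{lem:lb}). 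Finally, the superfactorial lower bound is where the paper relies on the deep Theorem~\ref{mainthmold} (mutual algebraicity iff uniformly bounded arrays). The rigidity obstruction you flag is genuine, and the paper avoids it by an argument counting distinct \emph{labelled} structures $\calG_f$ directly (Proposition~\ref{thm:malowerbound}): distinctness is witnessed by the definable sets $\theta_i(\calG_f; f_0(A'\cup X))$ over a fixed parameter set, sidestepping the need to quotient by automorphisms. Your proposed encoding of arbitrary functions $f:[n]\to[n]$ would need a comparably concrete witness of distinctness, and you do not say how to produce one.
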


The most interesting and difficult parts of Theorem \ref{thm:mainthm1} are the jumps within range (2) and between ranges (2) and (3), which were not previously known for any hereditary property in a language of arity larger than two.  Combining Theorem \ref{thm:hgcase} with Theorem \ref{thm:mainthm1} yields the following overall result about jumps in speeds of hereditary $\calL$-properties.

\begin{Theorem}\label{thm:mainthm}
Suppose $\calH$ is a hereditary $\calL$-property, where $\calL$ is a finite relational language of arity $r\geq 0$. Then one of the following hold.
\begin{enumerate}
\item There are $k\in \mathbb{N}$ and rational polynomials $p_1(x),\ldots, p_k(x)$ such that for sufficiently large $n$, $|\calH_n|=\sum_{i=1}^kp_i(n)i^n$.
\item There is an integer $k\geq 2$ such that $|\calH_n|=n^{n(1-\frac{1}{k}-o(1))}$.
\item There is $\epsilon>0$ such that $n^{n(1-o(1))}\leq |\calH_n|\leq 2^{n^{r-\epsilon}}$.
\item There is a constant $C>0$ such that $|\calH_n|=2^{Cn^r+o(n^r)}$.
\end{enumerate}
\end{Theorem}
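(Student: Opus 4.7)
The plan is to deduce Theorem \ref{thm:mainthm} purely by combining Theorem \ref{thm:hgcase} with Theorem \ref{thm:mainthm1}; no new combinatorial input is needed beyond matching up the ranges. First I would apply Theorem \ref{thm:mainthm1} to $\calH$ and split into its three cases. Cases (1) and (2) of Theorem \ref{thm:mainthm1} already have exactly the form required by cases (1) and (2) of Theorem \ref{thm:mainthm}, so for these there is nothing further to prove.

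The interesting case is (3) of Theorem \ref{thm:mainthm1}, which gives $|\calH_n|\geq n^{n(1-o(1))}$. For such $\calH$, I would next invoke Theorem \ref{thm:hgcase}. Since $n^{n(1-o(1))}$ grows faster than any polynomial, case (\ref{polgenweak}) of Theorem \ref{thm:hgcase} is ruled out, so $\calH$ must fall into either (\ref{facgenweak}) or (\ref{topgenweak}). If it lies in (\ref{facgenweak}), we get $n^{n(1-o(1))}\leq |\calH_n|\leq 2^{n^{r-\epsilon}}$ for some $\epsilon>0$, which is exactly case (3) of Theorem \ref{thm:mainthm}. If it lies in (\ref{topgenweak}), we have $|\calH_n|=2^{C\binom{n}{r}+o(n^r)}$. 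Absorbing the lower order term $C\binom{n}{r}-Cn^r/r!=O(n^{r-1})$ into the $o(n^r)$ error and relabeling the constant $C/r!$ as $C'$, this becomes $|\calH_n|=2^{C'n^r+o(n^r)}$, which is case (4) of Theorem \ref{thm:mainthm}.

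There is no real obstacle here: the argument is a bookkeeping exercise that glues together the upper bounds of Theorem \ref{thm:hgcase} to the lower bound in Theorem \ref{thm:mainthm1}(3). The only point worth double-checking is that the jump from range (2) of Theorem \ref{thm:mainthm1} to range (3) of Theorem \ref{thm:mainthm1} meshes correctly with the ranges of Theorem \ref{thm:hgcase}; specifically, one must notice that $n^{(1-1/k-o(1))n}$ lies strictly inside the interval $[2^{Cn},2^{n^{r-\epsilon}}]$ for $r\geq 2$, so cases (2) and (3) of the final theorem are genuinely distinct and consistent with case (\ref{facgenweak}) of Theorem \ref{thm:hgcase}. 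With this verification the four cases of Theorem \ref{thm:mainthm} exhaust all possibilities.
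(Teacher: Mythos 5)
Your proposal is correct and matches the paper's intent exactly: the paper presents Theorem \ref{thm:mainthm} as the result of combining Theorem \ref{thm:hgcase} with Theorem \ref{thm:mainthm1}, which is precisely the case-matching you carry out (with the minor bookkeeping of rewriting $2^{C\binom{n}{r}+o(n^r)}$ as $2^{C'n^r+o(n^r)}$). No further comment is needed.
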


We also generalize results of \cite{BBW2} to show there are properties whose speeds oscillate between functions near the extremes of the penultimate range (case (3) of Theorem \ref{thm:mainthm}).  

\begin{Theorem}\label{thm:oscilate1}
For all integers $r\geq 2$,  and real numbers $c\geq 1/(r-1)$ and $\epsilon>1/c$, there is a hereditary property of $r$-uniform hypergraphs $\calH$ such that for arbitrarily large $n$, $|\calH_n|=n^{c n(r-1)(1-o(1))}$ and for arbitrarily large $n$, $|\calH_n|\geq 2^{n^{r-\epsilon}}$. 
\end{Theorem}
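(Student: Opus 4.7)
The plan is to adapt the oscillation construction of Balogh, Bollob\'{a}s, and Weinreich from \cite{BBW2} to $r$-uniform hypergraphs. Their graph-case argument produces a hereditary property whose speed hits both extremes of the penultimate range on infinite sequences of $n$, and the same template, with $r$-tuples in place of edges, should yield a property oscillating between $n^{cn(r-1)(1-o(1))}$ and $2^{n^{r-\epsilon}}$.

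The basic building block is a parameterized ``core plus blocks'' family. For integers $m, t \geq 1$, let $\calP_{m,t}$ be the class of $r$-uniform hypergraphs $H$ on $[n]$ admitting a vertex partition $V(H) = V_0 \sqcup V_1 \sqcup \cdots \sqcup V_s$ with $|V_0| \leq t$ and $|V_j| \leq m$ for $j \geq 1$, such that every hyperedge of $H$ either meets $V_0$ or is contained in some single block $V_j$. Each $\calP_{m,t}$ is closed under isomorphism and substructure, and unions of such classes remain hereditary. Writing $p_{m,t}(n)$ for the number of labelled elements of $\calP_{m,t}$ on $[n]$, a crude count gives
\[
p_{m,t}(n) \leq \binom{n}{t}\cdot m^n \cdot 2^{\binom{m}{r}\lceil n/m\rceil} \cdot 2^{t\cdot n^{r-1}},
\]
where the factors correspond, respectively, to choosing $V_0$, partitioning $[n]\setminus V_0$ into blocks of size at most $m$, picking the hyperedges internal to the blocks, and picking the hyperedges meeting $V_0$.

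Now define $\calH := \bigcup_{i\in\N} \calP_{m_i, t_i}$ for two interleaved families of parameters. Along a ``slow'' sequence of scales $s_1 < s_2 < \cdots$, take $m_i \sim s_i^{c(r-1)}$ and $t_i$ polylogarithmic in $s_i$, so that the displayed bound gives $p_{m_i, t_i}(s_i) \leq s_i^{cs_i(r-1)(1+o(1))}$. Along a ``fast'' sequence $f_1 < f_2 < \cdots$, take $m_i = 1$ and $t_i \sim f_i^{1-\epsilon'}$ for a fixed $\epsilon' < \epsilon$, so that already within $\calP_{m_i, t_i}$ one sees at least $2^{t_i\binom{f_i - t_i}{r-1}} \geq 2^{f_i^{r-\epsilon}}$ distinct labelled structures on $[f_i]$, by selecting independently each hyperedge of the form $\{v\}\cup E$ with $v\in V_0$ and $E\in\binom{[n]\setminus V_0}{r-1}$.

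The main obstacle is the bookkeeping required to keep the slow scales slow. At $n=s_i$, the speed $|\calH_{s_i}|$ is bounded by $\sum_j p_{m_j,t_j}(s_i)$, and one must ensure that the contributions of the $\calP_{m_j,t_j}$ with $j\neq i$ (in particular those coming from fast scales) do not pollute the target $s_i^{cs_i(r-1)(1+o(1))}$. As in \cite{BBW2}, one arranges this by interleaving the sequence $s_1 < f_1 < s_2 < f_2 < \cdots$ with extremely rapid growth (iterated exponentials suffice), so that at the scale $s_i$ the parameters set at future stages are still too small to contribute meaningfully while those set at past stages contribute only a lower-order term. The matching lower bound at the scales $f_i$ is immediate from a single term $\calP_{m_i, t_i}$, so once this separation is secured the speed of $\calH$ realises both extremes on infinite sets of $n$, as required.
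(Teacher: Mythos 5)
Your construction has a fatal flaw for exactly the new cases $r\geq 3$ that this theorem is about. Once a single ``fast'' piece $\calP_{1,t_i}$ with $t_i\geq 1$ is placed into the union, it contributes at least $2^{\binom{n-1}{r-1}}$ labelled structures at \emph{every} $n$ (fix $V_0=\{1\}$ and include an arbitrary set of hyperedges through the vertex $1$). For $r\geq 3$ this is $2^{\Omega(n^{r-1})}\geq 2^{\Omega(n^2)}$, which exceeds $n^{Cn}$ for every constant $C$. So after the first fast stage, $|\calH_n|$ can never return to $n^{cn(r-1)(1+o(1))}=2^{O(n\log n)}$ at the later slow scales; your assertion that parameters set at past stages ``contribute only a lower-order term'' is the graph-case intuition (where a star contributes only $2^{O(n)}$) and is false in higher arity. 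There are also quantitative errors in the slow-scale bound: the number of partitions of $[n]$ into blocks of size at most $m$ is about $n^{(1-1/m)n}$, not $m^n$; and since $c(r-1)\geq 1$, your choice $m_i\sim s_i^{c(r-1)}$ gives $m_i\geq s_i$, so a single block covers everything and $\calP_{m_i,t_i}$ restricted to $[s_i]$ is \emph{all} hypergraphs on $[s_i]$. Finally, the required statement at slow scales is an equality $|\calH_n|=n^{cn(r-1)(1-o(1))}$, so you also need a matching lower bound there, which your pieces do not supply.

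The paper avoids all of this by using a single property rather than a union of pieces: $\calP^{\nu,c}$ consists of those $G$ such that every induced subhypergraph on exactly $\nu_i$ vertices has at most $c\nu_i$ edges. The key point is that the constraint ``resets'' the count at the designated sizes: when $n=\nu_i$, the hypergraph $G$ itself is a substructure of size $\nu_i$, so $\calP^{\nu,c}_{\nu_i}\subseteq\calS^c_{\nu_i}$ and the speed is capped at $n^{(r-1)(c+o(1))n}$, with the matching lower bound coming from $\calQ^c\subseteq\calP^{\nu,c}$ and a blow-up of a strictly balanced $r$-uniform hypergraph of density $c$ (Matushkin's theorem). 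Between consecutive $\nu_i$'s, a sparse random hypergraph $G_{n,p}$ with $p=n^{-\delta}$, $1/c<\delta<\epsilon$, whp satisfies the finitely many small-subgraph density constraints while having $\Omega(pn^r)$ edges, giving $2^{n^{r-\epsilon}}$ structures; one then chooses $\nu_{i+1}$ greedily just past the point where this bound kicks in. If you want to salvage a union-based construction you would at minimum have to truncate each fast piece to structures on at most $f_i$ vertices, and you would still need a separate mechanism for the slow-scale lower bound.
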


While there still remain many open problems about the penultimate range, Theorem \ref{thm:oscilate1} shows that, for instance, there are no jumps of the form $n^{kn}$ for $k>1$ or $2^{n^{k}}$ for $1<k<r$.

Together, Theorems \ref{thm:mainthm} and \ref{thm:oscilate1} give us a much more complete picture of the possible speeds of hereditary properties in arbitrary finite relational languages. In particular, our results show the possibilities are very close to those for hereditary graph properties from Theorem \ref{thm:graphcase}. Our proof of Theorem \ref{thm:mainthm} also gives structural characterizations of the properties in cases (1) and (2), and we will give explicit characterizations of the minimal properties with each speed in range (1).  Characterizations of the minimal properties in range (2) are more complicated and will appear in forthcoming work of the authors.  

The proofs in this paper owe much to the original proofs from the graphs setting, especially those appearing in \cite{BBW1, BBW2}.  However, a wider departure was required to deal with the jumps in the factorial range, namely case (2) of Theorem \ref{thm:mainthm1}.  Our arguments use the model theoretic notion of \emph{mutual algebraicity}, first defined by Laskowski in \cite{laskowski2013}.  A mutually algebraic property can be thought of as a generalization of a hereditary graph property of bounded degree graphs.  On the one hand, we will use the technology developed in \cite{Laskowski2009,laskowski2013} to obtain accurate estimates of $|\calH_n|$ when $\calH$ is a mutually algebraic property.  On the other hand, when $\calH$ is not mutually algebraic, we will use a theorem from \cite{ourpaper}, along with a new result, Proposition~\ref{prop:notMA}, to obtain an infinite model  $\calN$ of $T_{\calH}$ that has arbitrarily large finite substructures with enough isomorphic copies to witness that $|\calH_n|\ge  n^{n(1-o(1))}$.
Here and elsewhere, we use the compactness theorem, which allows us to work with infinite structures rather than large finite ones. 


The effectiveness of model theoretic tools in the context of this paper can be attributed to the fact that any hereditary $\calL$-property $\calH$ can be viewed as the class of finite models of a universal, first-order theory $T_{\calH}$. The speed of $\calH$ is then the same as the function sending $n$ to the number of distinct quantifier-free types in the variables $(x_1,\ldots, x_n)$ which are consistent with $T_{\calH}$, and which imply $x_i\neq x_j$ for distinct $i$ and $j$.  Problems about counting types have been fundamental to model theory for many years (see e.g. \cite{classification}). From this perspective it is not surprising that tools from model theory turn out to be useful for solving problems about speeds of hereditary $\calL$-properties.  Further, variations of this kind of problem have previously been investigated in model theory (see for example, \cite{ Mac:growth, LM:growth, macpherson_schmerl_1991}).  We will point out direct connections with this line of work throughout the paper.

We end this introduction by outlining some problems which remain open around this topic.  First, Theorem \ref{thm:graphcase} describes precisely the speeds occurring within the fastest growth rate (case (4)).  A similar analysis in the hypergraph setting would amount to understanding the possible Tur\'{a}n densities of hereditary hypergraph properties, a notoriously difficult question which we have made no attempt to address in this paper (see e.g. \cite{pikhurko}).  

There are many questions remaining around the penultimate range. For instance, in the graph case, Theorem \ref{thm:graphcase} gives a precise lower bound for the penultimate range, namely the $n$-th Bell number $\calB_n$.  This is accomplished in \cite{ajump} by characterizing the minimal graph properties in this range, which they show are the properties consisting of disjoint unions of cliques $\calH_{cl}$, or disjoint unions of anti-cliques, $\calH_{\overline{cl}}$.  A general analogue of this kind of result would be very interesting.

\begin{Problem}
Given a finite relational language $\calL$, characterize the minimal hereditary $\calL$-properties in the penultimate range.
\end{Problem}

It is easy to see the answer must be more complicated in general than the graph case.  Indeed, let $\calL=\{R(x,y)\}$ and consider the hereditary $\calL$-property $\calH$ consisting of all finite, transitive tournaments. Then $|\calH_n|=n!<\calB_n$ falls into the penultimate range.  Consequently, while $\calH_{cl}$, $\calH_{\overline{cl}}$ are the only minimal hereditary \emph{graph} properties in the penultimate range, there are other hereditary $\calL$-properties in this range with strictly smaller speed.

Theorem \ref{thm:oscilate1} rules out many possible jumps in the penultimate range, however, there does not yet exist a satisfying formalization of the idea that there can be no more ``reasonable'' jumps in this range (see  \cite{BBW2} for a thorough discussion of this).  Finally, while Theorem \ref{thm:oscilate1} shows there are properties whose speeds oscillate infinitely often between functions near the upper and lower bounds, there also exist properties whose speeds lies in the penultimate range, and for which wide oscillation is not possible (for an example of this, see \cite{BNS}). This leads to the following question.

\begin{Question} Suppose $\calL$ is a finite relational language.  Are there jumps within the penultimate range among restricted classes of hereditary $\calL$-properties (for instance among those which can be defined using finitely many forbidden configurations)?
\end{Question}

Both authors are grateful to the anonymous referee, who identified a gap in the proof of Proposition \ref{thm:malowerbound} in a prior version of this paper, and whose careful reading greatly improved the exposition.

\subsection{Notation and outline}

We now give an outline of the paper.  In Section \ref{sec:eu} we deal with the polynomial/exponential case, i.e. case (1) of Theorem \ref{thm:mainthm}.  Specifically, we define the class of \emph{basic} hereditary properties, and show the speed of any basic $\calH$ has the form appearing in case (1) of Theorem \ref{thm:mainthm}.  In Section \ref{sec:tb} we prove counting dichotomies for a restricted class of properties called \emph{totally bounded properties}, which generalize bounded degree graph properties.  In Section \ref{sec:ma} we define mutually algebraic properties, and prove counting dichotomies for mutually algebraic properties by showing they are controlled by finitely many totally bounded properties, after an appropriate change in language. We then show non-mutually algebraic properties fall into cases (3) or (4). In Section \ref{sec:oscillate}, we generalize an example from \cite{BBW2} to show that for all $r\geq 2$, there are hereditary properties of $r$-uniform hypergraphs whose speeds oscillate between functions near the upper and lower bounds of the penultimate range.

We spend the rest of this subsection fixing notation and definitions.  We have attempted to include sufficient information here so that the reader with a only a basic knowledge of first-order logic could read this paper.  

Suppose $\ell \geq 1$ is an integer, $X$ is a set, and $\xbar=(x_1,\ldots, x_{\ell})\in X^{\ell}$.  Then $[\ell]=\{1,\ldots, \ell\}$, $\cup \xbar =\{x_1,\ldots, x_{\ell}\}$, and $|\xbar|=\ell$.  We will sometimes abuse notation and write $\xbar$ instead of $\cup \xbar$ when it is clear from context what is meant. Given $\xbar=(x_1,\ldots, x_{\ell})$ and $I\subseteq [\ell]$, $\xbar_I$ is the tuple $(x_i:i\in I)$.  We write $\xbar\subseteq \ybar$ to denote that $\xbar$ is a subtuple of $\ybar$, i.e. $\xbar=\ybar_I$ for some $I$.  Given a sequence of variables $(z_1,\ldots, z_s)$, we write $\zbar=\xbar\wedge \ybar$ to mean there is a partition $I\cup J$ of $[s]$ into nonempty sets such that $\xbar=\zbar_I$ and $\ybar=\zbar_J$.  In this case, we call $\xbar\wedge \ybar$ a \emph{proper partition of $\zbar$}.  Set
$$
X^{\underline{\ell}}=\{(x_1,\ldots, x_{\ell}) \in X^{\ell}: x_i\neq x_j\text{ for each }i\neq j\}\quad \hbox{ and }\quad {X\choose \ell}=\{Y\subseteq X: |Y|=\ell\}.
$$
Notice that ${X\choose \ell}=\{\cup \xbar: \xbar\in X^{\underline{\ell}}\}$.  Given $u,v\in \mathbb{N}^{>0}$, a permutation $\sigma:[u]\rightarrow [u]$, and a set $\Sigma\subseteq [v]^u$ let 
$$
\sigma(\Sigma):=\{(v_{\sigma(1)},\ldots, v_{\sigma(u)}): (v_1,\ldots, v_u)\in \Sigma\}.
$$
We say $\Sigma$ is \emph{invariant under $\sigma$} when $\sigma(\Sigma)=\Sigma$.

We say a first order language $\calL$ is \emph{finite relational} if it consists of finitely many relation and constant symbols and no function symbols. Suppose $\calL$ is a finite relational language.  We let $|\calL|$ denote the total number of constants and relations in $\calL$.  By convention, the \emph{arity} of $\calL$ is $0$ if $\calL$ consists of only constant symbols, and is otherwise the largest arity of a relation in $\calL$.  Given an $\calL$-formula $\phi$ and a tuple of variables $\xbar$, we write $\phi(\xbar)$ to denote that the free variables of $\phi$ are all in the set $\cup \xbar$.  Similarly, if $p$ is a set of formulas, we write $p(\xbar)$ to mean every formula in $p$ has free variables in the set $\cup \xbar$. We will use script letters for $\calL$-structures and the corresponding non-script letters for their underlying set.  So for instance if $\calM$ is an $\calL$-structure, $M$ denotes the underlying set of $\calM$.  

Suppose $\calM$ is an $\calL$-structure. Given a formula $\phi(x_1,\ldots, x_s)$, 
$$
\phi^{\calM}=\{(m_1,\ldots, m_s)\in M^s: \calM\models \phi(m_1,\ldots, m_s)\}.
$$
A \emph{formula with parameters from $\calM$} is a an expression of the form $\phi(\xbar,\abar)$ where $\phi(\xbar,\ybar)$ is a formula and $\abar\in M^{|\ybar|}$.  The \emph{set of realizations of $\phi(\xbar,\abar)$ in $\calM$} is
$$
\phi(\calM,\abar):=\{\mbar\in M^{|\xbar|}: \calM\models \phi(\mbar,\abar)\}.
$$
Given $A\subseteq M$, a set $B\subseteq M^{|\xbar|}$ is \emph{defined by $\phi(\xbar,\ybar)$ over $A$} if there is $\abar\in A^{|\ybar|}$ such that $B=\phi(\calM; \abar)$.  In this case we say $B$ is \emph{definable in $\calM$}.  If $B$ is defined by a formula without parameters, we say that $B$ is \emph{$0$-definable}.  If $\Delta(x_1,\ldots, x_s)$ is a set of formulas (possibly with parameters from $M$), a \emph{realization of $\Delta$ in $\calM$} is a tuple $\mbar\in M^s$ such that $\calM\models \phi(\mbar)$ for all $\phi(x_1,\ldots, x_s)\in \Delta$.

If $c$ is a constant of $\calL$, recall that $c^\calM$ denotes the interpretation of $c$ in $\calM$.  Similarly, if $R$ is a $t$-ary relation of $\calL$, then $R^\calM:=\{\xbar\in M^t: \calM\models R(\xbar)\}$ is the interpretation of $R$ in $\calM$.  If $C$ is the set of constants of $\calL$, let $C^{\calM}=\{c^\calM: c\in C\}$. If $f:M\rightarrow N$ is a bijection, then $f(\calM)$ is the $\calL$-structure with domain $N$ such that $c^{f(\calM)}=f(c^{\calM})$ for each $c\in C$, and for each relation $R\in \calL$, $R^{f(\calM)}=f(R^{\calM})$.  An \emph{isomorphism from $\calM$ to $\calN$} is a bijection $f:M\rightarrow N$ such that $\calN=f(\calM)$.   An \emph{automorphism} of $\calM$ is an isomorphism from $\calM$ to $\calM$.

Given $X\subseteq M$ containing $C^{\calM}$, $\calM[X]$ is the $\mathcal{L}$-structure with domain $X$ such that for all $c\in C$, $c^{\calM[X]}=c^{\calM}$ and for all relations $R(x_1,\ldots, x_s)\in \calL$, $R^{\calM[X]}=R^{\calM}\cap X^s$.  An $\calL$-structure $\calN$ is an \emph{$\calL$-substructure of $\calM$}, denoted $\calN\subseteq_{\calL}\calM$ if and only if $\calN=\calM[X]$ for some $X\subseteq M$.  If $\calL$ is clear from context we will just write $\calN\subseteq \calM$.  The \emph{atomic formulas of $\calL$} are the formulas of the following forms.
\begin{itemize}
\item $t_1=t_2$ where each of $t_1,t_2$ is either a variable or a constant from $\calL$.
\item $R(t_1,\ldots, t_n)$, where $R(x_1,\ldots, x_n)$ is a relation symbol of $\calL$ and each $t_i$ is either a variable or a constant from $\calL$.  
\end{itemize}

The \emph{quantifier-free $\calL$-formulas} consist of all boolean combinations of atomic $\calL$-formulas.
When we write $\phi(x_1,\dots,x_n)$, we require the variable symbols $x_1,\dots,x_n$ be distinct.

\begin{Definition}  {\em  Suppose $\calM$ is a (possibly infinite) $\calL$-structure, $A\subseteq M$, and $\xbar$ is a subsequence of $\zbar$. 
For $\bbar\in M^{|\xbar|}$, 
$$\qftp(\bbar/A)=\{\hbox{quantifier-free formulas}\ \theta(\xbar,\abar): \calM\models\theta(\bbar,\abar)\ \hbox{and $\cup\abar\subseteq A$}\}$$
A {\em complete quantifier-free type $p(\xbar)$ over $A$} is anything of the form $\qftp(\bbar/A)$, where $\bbar\in N^{|\xbar|}$ for some $\calL$-structure $\calN\supseteq \calM$.
We let $S_{\xbar}(A)$ denote the set of complete quantifier-free types over $A$ in the variables $\xbar$.
}\end{Definition}

In model theory, it is more common to work with the notion of $\tp(\bbar/A)$, which denotes the set of all formulas (including those with quantifiers) over $A$ satisfied by $\bbar$.  However, as we
will be passing to finite substructures, we work exclusively with quantifier-free types.  Thus, any reference to ``types'' from here on out refers to quantifier-free types.

It will be convenient in Section \ref{sec:eu} to work with the following set of formulas.

 \begin{Definition}\label{def:neq}{\em 
 $$\Delta_{neq}:=\{\varphi(x_1,\ldots,x_s)\wedge \bigwedge_{1\leq i< j\leq s}x_i\neq x_j: \varphi(x_1,\ldots, x_s)\text{ is an atomic $\calL$-formula}\}.$$
}
\end{Definition}
For example, if $R(x,y)$ is a relation of $\calL$, then both $\tau(x_1,x_2)=R(x_1,x_2)\wedge x_1\neq x_2$ and $\phi(x)=R(x,x)$ and are in $\Delta_{neq}$.  Observe that for any $\calL$-structure $\calM$ and $\tau(x_1,\ldots, x_s)\in \Delta_{neq}$, $\tau^{\calM}\subseteq M^{\underline{s}}$.  Further, $\calM$ is completely determined by knowing $\tau^{\calM}$ for each $\tau\in \Delta_{neq}$.  Specifically, if $\calN$ is an $\calL$-structure satisfying $\tau^{\calN}=\tau^{\calM}$ for all $\tau\in \Delta_{neq}$, then $\calM=\calN$. 

A \emph{hereditary $\calL$-property} is a collection of finite $\calL$-structures which is closed under isomorphism and $\calL$-substructures. Every hereditary $\calL$-property $\calH$ can be axiomatized using a (usually incomplete) universal theory, which we denote by $T_{\calH}$. Specifically, for every hereditary $\calL$-property $\calH$, there is a set of universal sentences $T_{\calH}$, such that for any finite $\calL$-structure $\calM$, $\calM\in \calH$ if and only if $\calM\models T_{\calH}$.

A hereditary $\calL$-property $\calH$ is \emph{trivial} if there are only finitely many non-isomorphic $\calM\in \calH$.  Equivalently, $\calH$ is trivial if there is $N\in \mathbb{N}$ such that $\calH_n=\emptyset$ for all $n\geq N$.  Since we are interested in the size of $\calH_n$ for large $n$, we will be exclusively concerned with non-trivial hereditary $\calL$-properties in this paper.

\begin{Definition}{\em
Given an $\calL$-structure $\calM$, the \emph{universal theory of $\calM$}, $Th_{\forall}(\calM)$ is the set of sentences true in $\calM$ which are of the form $\forall x_1\ldots \forall x_n \phi(x_1,\ldots, x_n)$, where $\phi(x_1,\ldots, x_n)$ is a quantifier-free $\calL$-formula.

The \emph{age of $\calM$}, denoted $age(\calM)$, is the class of finite models of $Th_{\forall}(\calM)$. }
\end{Definition}

The age of $\calM$ is always a hereditary $\calL$-property (but not every hereditary $\calL$-property is the age of a single structure).  We will use throughout the paper the following standard model theoretic facts (see for instance Section 6.5 of \cite{hodges})
\begin{enumerate}
\item If $\calN\subseteq \calM$, and $\varphi$ is a universal sentence, then $\calM\models \varphi$ implies $\calN\models \varphi$.
\item If $\calN\models Th_{\forall}(\calM)$, then there is $\calM'\models Th(\calM)$ such that $\calN\subseteq \calM'$. 
\end{enumerate}
Together these imply that if $\calH=age(\calM)$, then for all $n\in \mathbb{N}$, $\calH_n$ is the set of all $\calL$-structures with domain $[n]$ and which are isomorphic to a substructure of $\calM$.  More generally, if $\calH$ is any hereditary property, and $\calM\models T_{\calH}$, then $age(\calM)\subseteq \calH$.

\section{Case 1: Polynomial/Exponential Growth}  \label{sec:eu}
In this section we give a sufficient condition for a hereditary property to have speed of the special form appearing in case (1) of Theorem \ref{thm:mainthm} (we will see later it is in fact necessary and sufficient).  Throughout this section, $\calL$ is a finite relational language, and $r\geq 0$ is the arity of $\calL$. We will use the following natural relation defined on any $\calL$-structure.

\begin{Definition}\label{def:eq}{\em
Given an $\calL$-structure $\calM$ and $a,b\in M$, define $a\sim b$ if and only if for every atomic formula $R(x_1,\ldots, x_s)$ and $m_2,\ldots, m_s\in M\setminus \{a,b\}$, 
$$
\calM\models \Big(R(a,b,m_3,\ldots, m_s)\leftrightarrow R(b,a,m_3,\ldots, m_s)\Big)\wedge\Big(R(a,m_2,\ldots, m_s)\leftrightarrow R(b,m_2,\ldots, m_s)\Big).
$$
In model theory terms, $a\sim b$ if and only if $\qftp^{\calM}(ab/(M\setminus \{a,b\}))=\qftp^{\calM}(ba/(M\setminus \{a,b\}))$.  }
\end{Definition}

We observe that $a\sim b$ holds if and only if the map from $M$ to $M$ fixing $M\setminus \{a,b\}$ and permuting $a$ and $b$ is an automorphism of $\calM$.

\begin{Example}
Suppose $\calM=(M,E)$ is a directed $r$-uniform hypergraph with vertex set $M$ and edge set $E\subseteq M^{\underline{r}}$.  Given $a,b\in M$ an $1\leq i<j\leq r$, let 
\begin{align*}
N_{ij}(a,b)&= \{(c_1,\ldots, c_{r-2})\in M^{r-2}: (c_1,\ldots, c_{i-1},a,c_i,\ldots, c_j, b,c_{j+1},\ldots, c_{r-2})\in E\}\text{ and }\\
N_i(a,b)&=\{(c_1,\ldots,c_{r-1})\in (M\setminus \{b\})^{r-1}: (c_1,\ldots, c_{i-1},a,c_i,\ldots, c_r)\in E\}.
\end{align*}
  Considering $\calM$ as an $\calL=\{R(x_1,\ldots, x_r)\}$ structure in the usual way yields that for all $a, b\in \calM$, $a\sim b$ holds if and only if for all $1\leq i<j\leq r$, $N_{ij}(a,b)=N_{ij}(b,a)$ and $N_i(a,b)=N_i(b,a)$
\end{Example}

It is easy to check that for any $\calL$-structure $\calM$, $\sim$ is an equivalence relation on $\calM$. 

\begin{Definition}\label{def:basic}{\em
A hereditary $\calL$-property $\calH$ is \emph{basic} if there is $k\in \mathbb{N}$ such that every $\calM\in \calH$ has at most $k$ distinct $\sim$-classes. }
\end{Definition}

The main theorem of this section shows that the speeds of basic properties have the form appearing in case (1) of Theorem \ref{thm:mainthm}.

\begin{Theorem}\label{thm:efthm}
Suppose $\calH$ is a basic hereditary $\calL$-property. Then there is $k\in \mathbb{N}$ and rational polynomials $p_1(x),\ldots, p_k(x)$ such that for sufficiently large $n$, $|\calH_n|=\sum_{i=1}^kp_i(n)i^n$. 
\end{Theorem}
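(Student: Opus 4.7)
The plan is to decompose $\calH$ into finitely many ``shape classes'' and count the contribution of each separately. The starting point is that the defining condition of $\sim$ is equivalent to the model-theoretic statement $\qftp^{\calM}(ab/(M\setminus\{a,b\}))=\qftp^{\calM}(ba/(M\setminus\{a,b\}))$, and applying this equivalence to quantifier-free formulas in which the swapped variables occupy arbitrary positions shows that the pair-swap $\pi_{ab}$, extended by the identity on $M\setminus\{a,b\}$, is an automorphism of $\calM$. Since within each $\sim$-class $C_i$ of $\calM$ every pair of distinct elements is $\sim$-equivalent, the transpositions $\pi_{ab}$ for $a,b\in C_i$ generate the full symmetric group $\mathrm{Sym}(C_i)$, and hence the product group $G_{\calM}:=\prod_{i=1}^{j}\mathrm{Sym}(C_i)$ acts on $\calM$ by automorphisms. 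Consequently, for each $\tau(x_1,\ldots,x_s)\in\Delta_{neq}$, the set $\tau^{\calM}\subseteq M^{\underline{s}}$ is a union of $G_{\calM}$-orbits, each such orbit being determined by its \emph{index profile}---the function $\phi\colon[s]\to[j]$ recording which $\sim$-class each coordinate lies in.

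Next, I will define the \emph{shape} of $\calM$ to be the finite combinatorial object consisting of (i) the number $j\le k$ of $\sim$-classes; (ii) for each $\tau\in\Delta_{neq}$ and each admissible index profile $\phi$, the truth value of $\tau$ on tuples with that profile; and (iii) the exact size of each $\sim$-class of size strictly less than $r$. The finiteness of $|\calL|$, $k$, and $r$ forces the set of shapes realized in $\calH$ to be a finite list $\sigma_1,\ldots,\sigma_N$, and (by closure under substructures together with a compactness argument on $T_{\calH}$) every shape realized in $\calH$ is realized on $[n]$ for all sufficiently large $n$. For a realizable $\sigma$ with prescribed sizes $c_1,\ldots,c_t$ on its ``small'' classes (set $C=\sum c_s$) and minimum-size requirements on the remaining $j-t$ ``large'' classes, straightforward multinomial counting gives
$$
|\{\calM\in\calH_n:\calM\text{ has shape }\sigma\}|=\frac{1}{|\mathrm{Aut}(\sigma)|}\binom{n}{c_1,\ldots,c_t,n-C}\,S_{\sigma}(n-C),
$$
where $\mathrm{Aut}(\sigma)\le S_j$ is the subgroup of class-relabelings preserving $\sigma$, and $S_{\sigma}(m)$ counts colorings of $m$ labeled points by $j-t$ colors meeting the minimum-size thresholds. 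By inclusion-exclusion, $S_{\sigma}(n-C)$ is a rational constant-coefficient combination of powers $i^{n-C}$ for $i\le j-t\le k$, while the multinomial coefficient is a rational polynomial in $n$ of degree $C$. Summing the contributions of the finitely many realizable shapes then yields $|\calH_n|=\sum_{i=1}^{k}p_i(n)i^n$ for all sufficiently large $n$.

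The principal obstacle I anticipate is promoting pairwise $\sim$-equivalence within a class to the full $\mathrm{Sym}(C_i)$-action by automorphisms on $\calM$: the explicit formulas in the definition of $\sim$ only describe swaps at the first coordinate, so I will have to rely on the model-theoretic reformulation of $\sim$, applied to atomic formulas in which the distinguished pair of variables appears in arbitrary coordinates, to obtain invariance of the atomic diagram under each pair-swap. A secondary technical issue is the correct accounting in the $|\mathrm{Aut}(\sigma)|$ factor, that is, verifying that two distinct colorings of $[n]$ with shape $\sigma$ produce the same $\calL$-structure precisely when they differ by an element of $\mathrm{Aut}(\sigma)$; this is needed to convert the labeled-coloring count into a structure count without duplication, and it explains why the coefficients $p_i(n)$ are rational rather than integer valued.
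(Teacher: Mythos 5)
Your overall strategy is the same as the paper's: decompose $\calH$ into finitely many ``templates'' (your shapes), count the structures on $[n]$ matching each template as a multinomial-type sum divided by an automorphism factor, and invoke compactness to see that finitely many templates suffice. Your orbit argument via $\prod_i\mathrm{Sym}(C_i)$ is a clean way to get the decomposition $\tau^{\calM}=\bigcup\{(A_{i_1}\times\cdots\times A_{i_s})\cap M^{\underline{s}}\}$ that the paper takes as its starting point, and your worry about coordinate positions is resolved exactly as you suggest, by the $\qftp$ formulation applied to atomic formulas with permuted variables.

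There is, however, a genuine gap in item (iii) of your definition of shape: recording only the exact sizes of $\sim$-classes of size strictly less than $r$ is not enough. The relevant dichotomy is not ``size $<r$'' versus ``size $\ge r$'' but ``bounded in $\calH$'' versus ``unbounded in $\calH$,'' i.e.\ finite versus infinite in a countably infinite model of $T_{\calH}$; the paper's threshold is $K=\max\{r,|A_t|\}$ where $A_t$ is the \emph{largest finite class of the infinite template}, and $|A_t|$ can be arbitrarily large relative to $r$. Concretely, take $\calL=\{P(x)\}$ and $\calH$ the structures with at most $50$ elements satisfying $P$; this is basic with $|\calH_n|=\sum_{i=0}^{50}\binom{n}{i}$, but your shape records no class sizes (none are $<r$), treats both classes as ``large'' with only a minimum-size requirement, and your formula would return roughly $2^n$. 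The underlying issue is that a shape, as you have defined it, does not determine which structures on $[n]$ of that shape actually lie in $\calH_n$: two structures with identical index-profile data and identical small-class data can differ in that one embeds into an infinite member of $\calH$ and the other does not. This is precisely what the paper's template machinery is for --- a finite structure ``compatible'' with an infinite $\calM\in\calH$ (large classes of size $>K$, finite classes of exactly the prescribed sizes) is automatically a substructure of $\calM$ and hence in $\calH$, and compactness over the sentences $\theta_{\calM}$ shows finitely many such $\calM$ cover all large members of $\calH$. To repair your argument you would need to (a) index shapes by infinite models rather than by an intrinsic invariant of finite structures with the ad hoc cutoff $r$, and (b) handle the fact that distinct templates can be compatible with the same finite structure, which the paper does via inclusion--exclusion together with its Lemma on satisfiable conjunctions $\theta_{\calM_1}\wedge\cdots\wedge\theta_{\calM_\ell}$. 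A further minor point: with minimum-size thresholds $>K$ on the large classes, your $S_{\sigma}(m)$ is a combination of powers $i^m$ with \emph{polynomial} (not constant) rational coefficients, though this does not affect the final form $\sum_i p_i(n)i^n$.
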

 We will see later that something even stronger holds, namely that $\calH$ is basic \emph{if and only if} its speed has the form appearing in case (1) of Theorem \ref{thm:mainthm} (see Corollary \ref{cor:basic}).    {\bf For the rest of this section, $\calH$ is a fixed non-trivial, basic hereditary $\calL$-property.  } 
 
We end this introductory subsection with a historical note.  The equivalence relation of Definition \ref{def:eq} also makes an appearance in \cite{HM:growth} (see section 2 there).  In that paper, the authors show that for a countably infinite $\calL$-structure $\calM$, several properties are equivalent to $\calM$ having finitely many $\sim$-classes. As a direct consequence, we obtain equivalent formulations of basic properties. Specifically, in the terminology of \cite{HM:growth}, a hereditary $\calL$-property $\calH$ is basic if and only if every countable model of $T_{\calH}$ is \emph{finitely partitioned}, if and only if every countable model of $T_{\calH}$ is \emph{absolutely ubiquitous}.  Observe that if $T_{\calH}$ is basic, then it is $\aleph_0$-categorical.

\subsection{Infinite models as templates}\label{countingESP}

Our proof of Theorem \ref{thm:efthm} can be seen as a generalization of the proof of Theorem 20 in \cite{BBW1}.  One idea used in our proof of Theorem \ref{thm:efthm} is to view countably infinite $\calM\models T_{\calH}$ as ``templates'' for finite elements of $\calH$.  In this subsection we fix notation to make this idea precise, and show that the set of finite structures compatible with a fixed template can be described using first order sentences. The main advantage of this approach is that it allows us to leverage the compactness theorem in the next subsection.  

Fix a countably infinite $\calM\models T_{\calH}$. We make a series of definitions related to $\calM$.  First, by our assumption on $\calH$, $\calM$ has finitely many $\sim$-classes. Fix an enumeration of them, say $A_1,\ldots, A_k$, satisfying $0<|A_1|\leq \ldots \leq |A_k|$, and call this the \emph{canonical decomposition} of $\calM$. It is straightforward to check that for each $\tau(x_1,\ldots, x_s)\in \Delta_{neq}$, there is $\Sigma^{\calM}_{\tau}\subseteq [k]^s$  such that
\begin{align*}
\tau^{\calM}=\bigcup\{(A_{i_1}\times \ldots \times A_{i_s})\cap M^{\underline{s}}:(i_1,\ldots, i_s)\in \Sigma^{\calM}_{\tau}\}.
\end{align*}
Let $t=\max\{i\in [k]: A_i \text{ is finite}\}$ and set $K=\max\{r, |A_t|\}$. Given any set $X$, let $\Omega^{\calM}(X)$ denote the set of ordered partitions $(X_1,\ldots, X_k)$ of $X$ satisfying $|X_i|=|A_i|$ for each $i\in [t]$, $\min\{|X_i|:t<i\leq k\}>K$.   Note $(A_1,\ldots, A_k)\in \Omega^{\calM}(M)$.

\begin{Definition}{\em
An $\calL$-structure $\calN$ is \emph{compatible} with $\calM$ if there is $(B_1,\ldots, B_k)\in \Omega^{\calM}(N)$  such that for each $\tau(x_1,\ldots, x_s)\in \Delta_{neq}$, $\tau^{\calN}=\bigcup \{(B_{i_1}\times \ldots \times B_{i_s})\cap N^{\underline{s}}:(i_1,\ldots, i_s)\in \Sigma^{\calM}_{\tau}\}$}.
\end{Definition}

Observe that if $\calN$ is compatible with $\calM$, witnessed by $(B_1,\ldots, B_k)\in \Omega^{\calM}(N)$, then $\{B_1,\ldots, B_k\}$ are the $\sim$-classes of $\calN$.  It is straightforward to see that if $\calN$ is finite and compatible with $\calM$, then $\calN$ is isomorphic to a substructure of $\calM$, and is thus in $\calH$.  For this reason we think of $\calM$ as forming a ``template'' for the finite structures $\calN$ which are compatible with $\calM$.  We now show that being compatible with $\calM$ can be defined using a first-order sentence.  We leave it to the reader to check that there is a formula $\phi(x,y)$ (with quantifiers) such that for any $\calL$-structure $\calG$ and $a,b\in G$, $a\sim b$ if and only if $\calG\models \phi(a,b)$.  We will abuse notation and write $x\sim y$ for this formula. 

\begin{Lemma}
There is a sentence $\theta_{\calM}$ such that for any $\calL$-structure $\calN$, $\calN\models \theta_{\calM}$ if and only if $\calN$ is compatible with $\calM$.
\end{Lemma}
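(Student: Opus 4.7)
The plan is to construct $\theta_{\calM}$ as an existential sentence that guesses representatives $x_1,\ldots,x_k$ for the $k$ $\sim$-classes $B_1,\ldots,B_k$ in a would-be compatibility witness $(B_1,\ldots,B_k)\in\Omega^{\calM}(N)$, and then uses the remainder of the sentence to verify, as a first-order condition, the three ingredients required by compatibility: (i) that the $x_i$ represent exactly the $\sim$-classes of $\calN$, (ii) that the resulting classes have the sizes prescribed by $\Omega^{\calM}$, and (iii) that every atomic relation of $\calN$ realizes the $\Sigma^{\calM}_\tau$-pattern on tuples of pairwise distinct coordinates. The only enabling fact I need is the one recorded just before the lemma, namely that the relation $x\sim y$ is already first-order definable by some $\calL$-formula $\phi(x,y)$, so I may treat $x\sim y$ as a first-order predicate throughout.

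Using $\phi$, it is standard to write down, for each $m\geq 1$, $\calL$-formulas $\sigma_m(x)$ and $\sigma_{\geq m}(x)$ saying that the $\sim$-class of $x$ has exactly, respectively at least, $m$ elements. Since $\calL$ is finite, $\Delta_{neq}$ is finite and each $\Sigma^{\calM}_\tau\subseteq[k]^s$ is a finite list of tuples. I would take $\theta_{\calM}$ to be $\exists x_1\ldots\exists x_k\,\Theta(x_1,\ldots,x_k)$, where $\Theta$ is the conjunction of a skeleton clause $\bigwedge_{i\neq j}\lnot(x_i\sim x_j)\wedge\forall y\bigvee_{i=1}^k y\sim x_i$, a size clause $\bigwedge_{i\leq t}\sigma_{|A_i|}(x_i)\wedge\bigwedge_{i>t}\sigma_{\geq K+1}(x_i)$, and, for each $\tau(u_1,\ldots,u_s)\in\Delta_{neq}$, an atomic-pattern clause $\forall\ubar\bigl(\tau(\ubar)\leftrightarrow\bigwedge_{j\neq j'}u_j\neq u_{j'}\wedge\bigvee_{(i_1,\ldots,i_s)\in\Sigma^{\calM}_\tau}\bigwedge_{j=1}^s u_j\sim x_{i_j}\bigr)$.

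The verification is then essentially bookkeeping. If $\calN$ is compatible with $\calM$ via $(B_1,\ldots,B_k)$, then the observation immediately following the definition of compatibility gives that the $B_i$ are exactly the $\sim$-classes of $\calN$, so any choice of $x_i\in B_i$ satisfies the skeleton clause; the size clause is just $(B_1,\ldots,B_k)\in\Omega^{\calM}(N)$ rewritten, and the atomic-pattern clause is a sentence-level rewriting of $\tau^{\calN}=\bigcup\{(B_{i_1}\times\ldots\times B_{i_s})\cap N^{\underline{s}}:(i_1,\ldots,i_s)\in\Sigma^{\calM}_\tau\}$. Conversely, if $\calN\models\theta_{\calM}$ is witnessed by $x_1,\ldots,x_k$, setting $B_i:=\{y\in N:y\sim x_i\}$ produces a partition of $N$ by the skeleton clause, places it in $\Omega^{\calM}(N)$ by the size clause, and satisfies the compatibility equations by the atomic-pattern clause. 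I expect no serious obstacle: the lemma is pure translation from semantics to syntax, and the only point needing mild care is the $\bigwedge_{j\neq j'}u_j\neq u_{j'}$ conjunct in clause (iii), included because $\Sigma^{\calM}_\tau$ may contain tuples with repeated indices while $\tau\in\Delta_{neq}$ rules out realizations with repeated coordinates.
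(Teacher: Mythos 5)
Your construction is correct and is essentially the paper's: you existentially quantify representatives of the $k$ classes, impose the cardinality constraints from $\Omega^{\calM}$, and impose the biconditional pattern clauses $\phi_{\tau,\calM}$ for each $\tau\in\Delta_{neq}$, then verify both directions by translating the compatibility equations. The one substantive difference is your ``skeleton clause'' $\bigwedge_{i\neq j}\lnot(x_i\sim x_j)\wedge\forall y\bigvee_i y\sim x_i$, which the paper's $\theta_{\calM}$ omits. This is not mere pedantry: without it, the witnesses $z_i$ could lie in the same $\sim$-class of $\calN$, or fail to meet every class, and then the sets $B_i=\{y:y\sim x_i\}$ need not form an ordered partition in $\Omega^{\calM}(N)$, so the ``$\calN\models\theta_{\calM}$ implies compatible'' direction can fail (e.g.\ with $\calL=\{R(x,y)\}$ and $\calM$ having two infinite classes with $R$ holding exactly from class $1$ to class $2$, a structure in which $R$ holds on all distinct pairs satisfies the paper's sentence with $z_1\sim z_2$ but is not compatible with $\calM$). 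So your version of the sentence is the one that actually makes the equivalence hold, and your verification of both directions is complete; the forward direction correctly leans on the paper's observation that a compatibility witness $(B_1,\ldots,B_k)$ consists precisely of the $\sim$-classes of $\calN$.
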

\begin{proof}
Given $\tau(x_1,\ldots, x_s)\in \Delta_{neq}$, let $\phi_{\tau,\calM}(z_1,\ldots, z_k)$ be the following formula.
\begin{align*}
\forall x_1\ldots \forall x_s\Big(\tau(x_1,\ldots, x_s) \leftrightarrow \Big(\Big(\bigwedge_{1\leq i\neq j\leq s}x_i\neq x_j\Big)\wedge\Big( \bigvee_{(i_1,\ldots, i_s)\in \Sigma^{\calM}_{\tau}}\Big(\bigwedge_{j=1}^sx_j\sim z_{i_j} \Big)\Big)\Big).
\end{align*}
Note that for any $(a_1,\ldots, a_k)\in A_1\times \ldots \times A_k$ and $\tau\in \Delta_{neq}$, $\calM\models \phi_{\tau,\calM}(a_1,\ldots, a_k)$.  Define $\theta_{\calM}$ to be the following $\calL$-sentence, where $n_i=|A_i|$ for each $i\in [t]$. 
\begin{align*}
&\exists z_1\ldots \exists z_k\Big(  \Big(\bigwedge_{i\in [t]} \exists^{=n_i}x (x\sim z_i) \Big)\wedge \Big(\bigwedge_{i\in [k]\setminus [t]}\exists^{>K}x(x\sim z_i)\Big) \wedge \Big(\bigwedge_{\tau\in \Delta_{neq}} \phi_{\tau,\calM}(z_1,\ldots, z_k)\Big)\Big).
\end{align*}
We leave it to the reader to verify that for any $\calL$-structure $\calN$, $\calN\models\theta_{\calM}$ if and only if $\calN$ is compatible with $\calM$.
\end{proof}

Observe that for any $\calN\models \theta_{\calM}$, there is a sufficiently saturated elementary extension $\calM\prec \calM'$ such that $\calN$ is isomorphic to a substructure of $\calM'$. Thus $\calN\models Th_{\forall}(\calM)$, so $\calN\models T_{\calH}$, and consequently $age(\calN)\subseteq \calH$.

\subsection{Proof of Theorem \ref{thm:efthm}}

In this subsection we prove Theorem \ref{thm:efthm} by showing the speed of $\calH$ is asymptotically equal to a sum of the form $\sum_{i=1}^kp_i(n)i^n$ for some rational polynomials $p_1,\ldots, p_k$.  Our strategy is as follows. First, we compute the the number of $\calG\in \calH_n$ compatible with a single fixed $\calM\models T_{\calH}$ (Proposition \ref{lem:polcount1}).  We then use the compactness theorem to show there are finitely many $\calM\models T_{\calH}$ which serve as templates for all sufficiently large elements of $\calH$.  This will then allow us to compute the speed of $\calH$.

The first goal of the section is to prove Proposition \ref{lem:polcount1}, which shows the number of elements of $\calH_n$ which are compatible with a fixed $\calM\models T_{\calH}$ is equal to $C|\Omega^{\calM}([n])|$ for some constant $C$ depending on $\calM$. We give a brief outline of the argument here. Given a fixed $\calM\models T_{\calH}$ and $n\in \mathbb{N}$, every element of $\calH_n$ which is compatible with $\calM$ can be constructed by choosing an element of $\calP\in \Omega^{\calM}([n])$, then choosing the realizations of each $\tau\in \Delta_{neq}$ as prescribed by the set $\Sigma_{\tau}^{\calM}$.  This gives an upper bound of $|\Omega^{\calM}([n])|$.  The constant factor then arises from considering double counting.  Dealing with the double counting is the motivation for the next definition.

\begin{Definition}{\em
Suppose $\calM\models T_{\calH}$ is countably infinite and $A_1,\ldots, A_k$ is its canonical decomposition.  Define $Aut^*(\calM)$ to be the set of permutations $\sigma:[k]\rightarrow [k]$ with the property that there is an automorphism $f$ of $\calM$ satisfying $f(A_i)=A_{\sigma(i)}$ for each $i\in [k]$.}
\end{Definition}

We will show in Proposition \ref{lem:polcount1} that the number of element of $\calH_n$ compatible with a fixed $\calM$ is $|\Omega^{\calM}([n])|/|Aut^*(\calM)|$.  We need the next two lemmas for this.

\begin{Lemma}\label{lem:part}
Let $k\in \mathbb{N}^{>0}$, and let $\sigma:[k]\rightarrow [k]$ be a permutation.  Assume $\calM_1,\calM_2\in \calH$ are countably infinite, both have $k$ distinct $\sim$-classes, and $\calM_2$ has at least as many finite $\sim$-classes as $\calM_1$.  If $\sigma(\Omega^{\calM_2}(X))\cap \Omega^{\calM_1}(X)\neq \emptyset$ for some set $X$, then $\sigma(\Omega^{\calM_2}(Y))\subseteq \Omega^{\calM_1}(Y)$ for all sets $Y$.
\end{Lemma}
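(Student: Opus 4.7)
The plan is to reduce the lemma to a combinatorial statement about size profiles, using the fact that whether $(Y_1,\ldots,Y_k)$ lies in $\Omega^{\calM_j}(Y)$ depends only on the tuple $(|Y_1|,\ldots,|Y_k|)$ together with the invariants $n_i^{(j)} := |A_i^{\calM_j}|$, $t_j$, and $K_j$ attached to $\calM_j$. So once I fix a witness $(X_1,\ldots,X_k)\in\Omega^{\calM_2}(X)$ whose $\sigma$-image lies in $\Omega^{\calM_1}(X)$, writing $\ell_j = |X_j|$, the task is to extract structural identifications that transfer to any $Y$. Unpacking the two memberships immediately yields the twin descriptions $\{j : \ell_j \leq K_2\} = [t_2]$ and $\{j : \ell_j \leq K_1\} = \sigma([t_1])$, since the indexings of both canonical decompositions are ordered by increasing size.

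The first real step is to prove $K_1 \leq K_2$, which will immediately give $\sigma([t_1]) \subseteq [t_2]$. Suppose instead $K_1 > K_2$. Then $\{j : \ell_j \leq K_2\} \subseteq \{j : \ell_j \leq K_1\}$ gives $[t_2] \subseteq \sigma([t_1])$, and combined with $|\sigma([t_1])| = t_1 \leq t_2$ this forces $t_1 = t_2$ and $\sigma([t_1]) = [t_2]$. For each $i \in [t_1]$, the twin equations $\ell_{\sigma(i)} = n_i^{(1)} = n_{\sigma(i)}^{(2)}$ identify the multisets of finite class sizes of $\calM_1$ and $\calM_2$, and by the monotonicity $n_1^{(j)} \leq \ldots \leq n_{t_j}^{(j)}$ this forces $n_{t_1}^{(1)} = n_{t_2}^{(2)}$ and hence $K_1 = K_2$, contradicting $K_1 > K_2$. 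With $\sigma([t_1]) \subseteq [t_2]$ in hand, comparing the two formulas for each $\ell_{\sigma(i)}$ in the witness yields the further structural facts $n_i^{(1)} = n_{\sigma(i)}^{(2)}$ for all $i \leq t_1$, and $n_{\sigma(i)}^{(2)} > K_1$ whenever $i > t_1$ and $\sigma(i) \leq t_2$.

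To conclude, take any $(Y_1,\ldots,Y_k) \in \Omega^{\calM_2}(Y)$ and verify the three cases needed for $(Y_{\sigma(1)},\ldots,Y_{\sigma(k)}) \in \Omega^{\calM_1}(Y)$: for $i \leq t_1$, $|Y_{\sigma(i)}| = n_{\sigma(i)}^{(2)} = n_i^{(1)}$; for $i > t_1$ with $\sigma(i) \leq t_2$, $|Y_{\sigma(i)}| = n_{\sigma(i)}^{(2)} > K_1$; and for $i > t_1$ with $\sigma(i) > t_2$, $|Y_{\sigma(i)}| > K_2 \geq K_1$. The main obstacle I anticipate is the derivation $K_1 \leq K_2$ from a single witness: that witness fixes just one size profile, whereas the lemma asserts containment for arbitrary $Y$, where the sizes $|Y_j|$ for $j > t_2$ may vary freely above $K_2$. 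The argument hinges on transferring the witness data to level sets of the form $\{j : \ell_j \leq K_i\}$ and exploiting the hypothesis $t_1 \leq t_2$ together with the monotone ordering of the canonical decompositions to pin the invariants of $\calM_1$ down in terms of those of $\calM_2$.
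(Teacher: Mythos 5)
Your proof is correct and follows essentially the same route as the paper's: read off the size profile of the single witness partition, identify the finite-class invariants of $\calM_1$ with those of $\calM_2$, and then verify the membership conditions case by case for an arbitrary $Y$. If anything, you are more careful than the paper on the one delicate point---justifying $\sigma([t_1])\subseteq [t_2]$ (equivalently $K_1\leq K_2$), which the paper passes over when it asserts $n_i=m_{\sigma(i)}$ ``by definition''---and you correctly note that the paper's stronger intermediate conclusion $\sigma([t_1])=[t_1]$ is not actually needed.
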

\begin{proof}
Let $n_1\leq \ldots \leq n_{t_1}$ and $m_1\leq \ldots \leq m_{t_2}$ be the sizes of the finite $\sim$-classes of $\calM_1$ and $\calM_2$, respectively.  Note by assumption, $t_1\leq t_2$.  Set $K_1=\max\{r, n_{t_1}\}$.  By assumption, there is $(X_1,\ldots, X_k)\in \Omega^{\calM_2}(X)$ such that $(X_{\sigma(1)},\ldots, X_{\sigma(k)})\in \Omega^{\calM_1}(X)$.  By definition, we must have that for each $i\in [t_1]$, $n_i=m_{\sigma(i)}$, and for all $i>t_1$, $m_{\sigma(i)}>K_1$. Since $t_1\leq t_2$, this implies $\sigma([t_1])=[t_1]$, and for all $i\in [t_1]$, $n_i=m_i$. Consequently, for all $j>t_1$, $m_j>K_1$. Clearly this implies that for any set $Y$, if $(Y_1,\ldots, Y_k)\in \Omega^{\calM_2}(Y)$, then $(Y_{\sigma(1)},\ldots, Y_{\sigma(k)})\in \Omega^{\calM_1}(Y)$, i.e. $\sigma(\Omega^{\calM_2}(Y))\subseteq \Omega^{\calM_1}(Y)$. 

\end{proof}

The next Lemma gives us useful information about elements of $Aut^*(\calM)$. 

\begin{Lemma}\label{rem:aut}
Suppose $\calM\models T_{\calH}$ is countably infinite, $A_1,\ldots, A_k$ is its canonical decomposition, and $\sigma:[k]\rightarrow [k]$ is a permutation. Then $\sigma\in Aut^*(\calM)$ if and only if for all $\tau(x_1,\ldots, x_s)\in \Delta_{neq}$, $\Sigma^{\calM}_{\tau}$ and $\Omega^{\calM}(M)$ are invariant under $\sigma$. 
\end{Lemma}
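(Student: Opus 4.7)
I would prove the biconditional in two separate directions, with the essentially calculational heart of each direction relying on how $f$ transfers witnesses of $\tau$ between $\sim$-classes.

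For the forward direction, fix an automorphism $f$ of $\calM$ with $f(A_i) = A_{\sigma(i)}$. To prove $\sigma(\Sigma^{\calM}_{\tau}) = \Sigma^{\calM}_{\tau}$, take $(i_1,\ldots,i_s) \in \Sigma^{\calM}_{\tau}$ with witness $\mbar \in (A_{i_1} \times \cdots \times A_{i_s}) \cap M^{\underline{s}}$ realizing $\tau$. Then $f(\mbar) \in (A_{\sigma(i_1)} \times \cdots \times A_{\sigma(i_s)}) \cap M^{\underline{s}}$ still realizes $\tau$ (as $f$ is an automorphism), so $(\sigma(i_1),\ldots,\sigma(i_s)) \in \Sigma^{\calM}_{\tau}$; this gives $\sigma(\Sigma^{\calM}_{\tau}) \subseteq \Sigma^{\calM}_{\tau}$ and hence equality, since $\sigma$ induces a bijection on the finite set $[k]^s$. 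For $\Omega^{\calM}(M)$-invariance, the restriction $f|_{A_i} \colon A_i \to A_{\sigma(i)}$ is a bijection, yielding $|A_i| = |A_{\sigma(i)}|$ for every $i$. In particular $\sigma([t]) = [t]$ and $\sigma$ preserves the sizes of the finite classes, so any $(X_1,\ldots,X_k) \in \Omega^{\calM}(M)$ satisfies $|X_{\sigma(i)}| = |A_i|$ for $i \in [t]$ and $|X_{\sigma(i)}| > K$ for $i > t$, placing $(X_{\sigma(1)},\ldots,X_{\sigma(k)})$ in $\Omega^{\calM}(M)$.

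For the reverse direction, I would apply the $\Omega^{\calM}(M)$-invariance to the canonical decomposition $(A_1,\ldots,A_k) \in \Omega^{\calM}(M)$ itself to obtain $(A_{\sigma(1)},\ldots,A_{\sigma(k)}) \in \Omega^{\calM}(M)$, forcing $|A_{\sigma(i)}| = |A_i|$ for all $i \in [t]$, $\sigma([t]) = [t]$, and consequently $|A_{\sigma(i)}| = \aleph_0 = |A_i|$ for $i > t$ as well. Pick any bijection $f \colon M \to M$ with $f(A_i) = A_{\sigma(i)}$ for each $i$. I would then verify that $f$ is an automorphism by showing $f(\tau^{\calM}) = \tau^{\calM}$ for each $\tau(x_1,\ldots,x_s) \in \Delta_{neq}$, which suffices since $\calM$ is determined by the family $\{\tau^{\calM} : \tau \in \Delta_{neq}\}$. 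The calculation
\begin{align*}
f(\tau^{\calM}) &= \bigcup\{(A_{\sigma(i_1)} \times \cdots \times A_{\sigma(i_s)}) \cap M^{\underline{s}} : (i_1,\ldots,i_s) \in \Sigma^{\calM}_{\tau}\} \\
&= \bigcup\{(A_{j_1} \times \cdots \times A_{j_s}) \cap M^{\underline{s}} : (j_1,\ldots,j_s) \in \sigma(\Sigma^{\calM}_{\tau})\} = \tau^{\calM}
\end{align*}
uses the canonical decomposition of $\tau^{\calM}$ together with $f(A_i) = A_{\sigma(i)}$ in the first line, re-indexes via $j_\ell = \sigma(i_\ell)$ in the second, and invokes the assumed $\sigma$-invariance of $\Sigma^{\calM}_{\tau}$ in the third.

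The main thing to keep straight is that $\sigma$ acts in two different ways in the statement: on $\Omega^{\calM}(M)$ it permutes the positions of an ordered $k$-tuple of sets, while on $\Sigma^{\calM}_{\tau} \subseteq [k]^s$ it acts on the values inside each index-tuple (the natural value-action since $\sigma$ permutes $[k]$). Once these two actions are distinguished, the proof is a direct computation whose only real content is the transfer of $\tau$-witnesses by $f$ in the forward direction and the structural determination principle from $\Delta_{neq}$ in the reverse.
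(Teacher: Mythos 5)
Your proposal is correct and follows essentially the same route as the paper's proof: the forward direction transfers $\tau$-witnesses and cardinalities via the automorphism $f$, and the reverse direction uses the $\Omega^{\calM}(M)$-invariance to build a class-respecting bijection and the $\Sigma^{\calM}_{\tau}$-invariance to verify it preserves each $\tau\in\Delta_{neq}$. Your set-level computation $f(\tau^{\calM})=\tau^{\calM}$ is just a repackaging of the paper's element-by-element check, and your remark distinguishing the position-action on $\Omega^{\calM}(M)$ from the value-action on $\Sigma^{\calM}_{\tau}$ matches the paper's usage.
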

\begin{proof}
Let $t=\max\{i\in [k]: A_i \text{ is finite}\}$.  First, suppose $\sigma\in Aut^*(\calM)$.  Then there is an automorphism $f$ of $\calM$ such that for each $i\in [k]$, $f(A_i)=A_{\sigma(i)}$.  This implies that for each $\tau(x_1,\ldots, x_s)\in \Delta_{neq}$,
$$
\tau^{\calM}=\bigcup_{(i_1,\ldots, i_s)\in \Sigma^{\calM}_{\tau}}(A_{i_1}\times \ldots \times A_{i_s})\cap M^{\underline{s}}=\bigcup_{(i_1,\ldots, i_s)\in \Sigma^{\calM}_{\tau}}(A_{\sigma(i_1)}\times \ldots \times A_{\sigma(i_s)})\cap M^{\underline{s}},
$$
which implies $\sigma(\Sigma^{\calM}_{\tau})=\Sigma^{\calM}_{\tau}$, i.e. $\Sigma_{\tau}^{\calM}$ is invariant under $\sigma$.   Further, since $f$ is a bijection, $n_i=n_{\sigma(i)}$ for each $i\in [t]$, and $\sigma([t])=[t]$.  Therefore $\Omega^{\calM}(M)$ is invariant under $\sigma$. 

Suppose conversely that for all $\tau(x_1,\ldots, x_s)\in \Delta_{neq}$, $\Omega^{\calM}(M)$ and $\Sigma^{\calM}_{\tau}$ are invariant under $\sigma$.  Since $\Omega^{\calM}(M)$ is invariant under $\sigma$, $(A_{\sigma(1)},\ldots, A_{\sigma(k)})\in \Omega^{\calM}(M)$.  Therefore, for each $i\in [t]$, $|A_i|=|A_{\sigma(i)}|$, and for each $i\in [k]\setminus [t]$, $|A_i|=|A_{\sigma(i)}|=\aleph_0$.  Thus there is a bijection $f:M\rightarrow M$ satisfying $f(A_i)=A_{\sigma(i)}$ for each $i\in [k]$.  

Now fix $\tau(x_1,\ldots, x_s)\in \Delta_{neq}$ and $(a_1,\ldots, a_s)\in M^{\underline{s}}$.  Suppose $\calM\models \tau(a_1,\ldots, a_s)$. Then $(a_1,\ldots, a_s)\in A_{i_1}\times \ldots \times A_{i_s}$ for some $(i_1,\ldots, i_s)\in \Sigma^{\calM}_{\tau}$.  By definition of $f$, $(f(a_1),\ldots, f(a_s))\in A_{\sigma(i_1)}\times \ldots \times  A_{\sigma(i_s)}$.  Since $\sigma(\Sigma^{\calM}_{\tau})=\Sigma^{\calM}_{\tau}$, $(\sigma(i_1),\ldots, \sigma(i_s))\in \Sigma_{\tau}^{\calM}$, so by definition of $\Sigma_{\tau}^{\calM}$, $\calM\models \tau(f(a_1),\ldots, f(a_s))$.  This shows that $\calM\models \tau(f(a_1),\ldots, f(a_s))$ if and only if $\calM\models \tau(a_1,\ldots, a_s)$ (the ``only if'' part comes from the same argument applied to $f^{-1}$). Thus $f$ is an automorphism of $\calM$ and consequently $\sigma \in Aut^*(\calM)$.   
\end{proof}

\begin{Proposition}\label{lem:polcount1}
For any countably infinite $\calM\models T_{\calH}$ and sufficiently large $n\in \mathbb{N}$, 
$$
|\{\calN\in \calH_n: \calN\models \theta_{\calM}\}|=|\Omega^{\calM}([n])|/|Aut^*(\calM)|.
$$
\end{Proposition}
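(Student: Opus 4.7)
The plan is to exhibit an action of $Aut^*(\calM)$ on $\Omega^\calM([n])$ that is free (for large $n$) and whose orbits are exactly the fibers of a natural surjection onto $\{\calN\in\calH_n:\calN\models \theta_\calM\}$. The surjection sends a partition $(X_1,\ldots,X_k)\in \Omega^\calM([n])$ to the unique $\calL$-structure $\calN$ on $[n]$ given by $\tau^\calN=\bigcup\{(X_{i_1}\times\cdots\times X_{i_s})\cap [n]^{\underline{s}}:(i_1,\ldots,i_s)\in \Sigma^\calM_\tau\}$ for each $\tau\in \Delta_{neq}$; this is well-defined, produces a structure compatible with $\calM$ (hence satisfying $\theta_\calM$), and every such structure arises this way by the definition of compatibility.

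Next I would define the action by $\sigma\cdot (X_1,\ldots,X_k)=(X_{\sigma(1)},\ldots,X_{\sigma(k)})$ for $\sigma\in Aut^*(\calM)$. That this lands back in $\Omega^\calM([n])$ follows from Lemma \ref{rem:aut}, which gives invariance of $\Omega^\calM(M)$ under $\sigma$, combined with Lemma \ref{lem:part} applied with $\calM_1=\calM_2=\calM$ and $Y=[n]$. Moreover, Lemma \ref{rem:aut} also gives $\sigma(\Sigma^\calM_\tau)=\Sigma^\calM_\tau$ for every $\tau\in\Delta_{neq}$, so reindexing a partition through $\sigma$ leaves the defining unions for $\tau^\calN$ unchanged; thus orbits of the action lie inside fibers of the surjection.

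The main step is the converse: if $(X_1,\ldots,X_k)$ and $(X_1',\ldots,X_k')$ in $\Omega^\calM([n])$ produce the same $\calN$, they must lie in a common orbit. For this, observe that the blocks of each partition are precisely the $\sim$-classes of $\calN$ listed in some order, so the two partitions agree as unordered collections of blocks, determining a unique permutation $\sigma$ of $[k]$ with $X'_i=X_{\sigma(i)}$. Since the blocks are pairwise disjoint and (for $n$ large) all nonempty, the sets $X_{i_1}\times\cdots\times X_{i_s}$ are pairwise disjoint across distinct tuples in $[k]^s$; equating the two expressions for $\tau^\calN$ then forces $\sigma(\Sigma^\calM_\tau)=\Sigma^\calM_\tau$ for every $\tau$. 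Because $(X_{\sigma(1)},\ldots,X_{\sigma(k)})\in\Omega^\calM([n])$, Lemma \ref{lem:part} gives $\sigma(\Omega^\calM(M))=\Omega^\calM(M)$, and Lemma \ref{rem:aut} concludes that $\sigma\in Aut^*(\calM)$. This is the step I expect to be the trickiest, as it requires carefully extracting the correct $\sigma$ and verifying both the $\Sigma^\calM_\tau$ and $\Omega^\calM$ invariances needed to apply Lemma \ref{rem:aut}.

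Finally, the action is free for sufficiently large $n$: the blocks $X_1,\ldots,X_k$ of a partition in $\Omega^\calM([n])$ are nonempty and pairwise disjoint subsets of $[n]$, hence pairwise distinct, so $X_{\sigma(i)}=X_i$ for all $i$ forces $\sigma=\mathrm{id}$. Every fiber of the surjection is therefore a single orbit of cardinality $|Aut^*(\calM)|$, yielding $|\{\calN\in\calH_n:\calN\models\theta_\calM\}|=|\Omega^\calM([n])|/|Aut^*(\calM)|$.
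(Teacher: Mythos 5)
Your proposal is correct and follows essentially the same route as the paper: the same map $\calP\mapsto\calN_\calP$ onto the compatible structures, the same use of Lemma \ref{rem:aut} together with Lemma \ref{lem:part} to show that permuting by an element of $Aut^*(\calM)$ preserves $\Omega^{\calM}([n])$ and the fiber, and the same reverse argument extracting a permutation from two partitions with $\calN_{\calP}=\calN_{\calQ}$ and verifying the two invariances needed for Lemma \ref{rem:aut}. Your explicit remark that the action is free (since the blocks are nonempty and pairwise disjoint) is a point the paper leaves implicit, but the argument is otherwise identical.
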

\begin{proof}
Fix a countably infinite $\calM\models T_{\calH}$  and a large $n\in \mathbb{N}$.  Given $\calP=(X_1,\ldots, X_k)\in \Omega^{\calM}([n])$, let $\calN_{\calP}$ be the structure with domain $[n]$ satisfying, for each $\tau(x_1,\ldots, x_s)\in \Delta_{neq}$, 
$$
\tau^{\calN_{\calP}}=\bigcup\{(X_{i_1}\times \ldots \times X_{i_s})\cap [n]^{\underline{s}}:(i_1,\ldots, i_s)\in \Sigma^{\calM}_{\tau}\}.
$$
By definition, $\calG\in \calH_n$ is compatible with $\calM$ if and only if $\calG=\calN_{\calP}$ for some $\calP\in \Omega^{\calM}([n])$. Thus if we define $\Phi(\calP)=\calN_{\calP}$ for each $\calP\in \Omega^{\calM}([n])$, then $\Phi$ is a function $\Phi:\Omega^{\calM}([n])\rightarrow \calH_n$ satisfying $Im(\Phi)=\{\calN\in \calH_n: \calN\models \theta_{\calM}\}$. It suffices to show that for all $\calG\in Im(\Phi)$, $|\Phi^{-1}(\calG)|=|Aut^*(\calM)|$, since then 
\begin{align*}
|Im(\Phi)|=|\{\calN\in \calH_n: \calN\models \theta_{\calM}\}|=|\Omega^{\calM}([n])|/|Aut^*(\calM)|.
\end{align*}
Fix $\calG\in Im(\Phi)$. By definition, there is a $\calP\in \Omega^{\calM}([n])$ so that $\calG=\calN_{\calP}$.  We show 
\begin{align}\label{align2}
\Phi^{-1}(\calG)=\{\sigma(\calP): \sigma \in Aut^*(\calM)\}.
\end{align}  

Suppose $\sigma\in Aut^*(\calM)$.  By Lemma \ref{rem:aut}, $\Omega^{\calM}(M)\cap \sigma(\Omega^{\calM}(M))\neq \emptyset$, so Lemma \ref{lem:part} implies $\sigma(\Omega^{\calM}([n]))\subseteq \Omega^{\calM}([n])$. Thus $\sigma(\calP)\in \Omega^{\calM}([n])$.  Then, also by Lemma \ref{rem:aut}, $\sigma(\Sigma^{\calM}_{\tau})=\Sigma^{\calM}_{\tau}$ for each $\tau\in \Delta_{neq}$, which implies $\tau^{\calN_{\calP}}={\tau}^{\calN_{\sigma(\calP)}}$.  Thus $\Phi(\sigma(\calP))=\calN_{\sigma(\calP)}=\calN_{\calP}=\calG$, so $\sigma(\calP)\in \Phi^{-1}(\calG)$. 

On the other hand, suppose $\calQ\in \Phi^{-1}(\calG)$.  Note $\calG=\calN_{\calP}=\calN_{\calQ}$ implies there is a permutation $\eta:[k]\rightarrow [k]$ such that $\calQ=\eta(\calP)$. Then $\calQ\in \Omega^{\calM}([n])\cap \eta(\Omega^{\calM}([n]))$ and $\calP\in \Omega^{\calM}([n])\cap \eta^{-1}(\Omega^{\calM}([n]))$ imply by Lemma \ref{lem:part} that $\Omega^{\calM}(M)= \eta(\Omega^{\calM}(M))$.  Further, $\calN_{\calP}=\calN_{\calQ}$ implies that for each $\tau(x_1,\ldots, x_s)\in \Delta_{neq}$, $\eta(\Sigma^{\calM}_{\tau})=\Sigma^{\calM}_{\tau}$.  Thus by Lemma \ref{rem:aut}, $\eta \in Aut^*(\calM)$. Since $Aut^*(\calM)$ is clearly closed under inverses and $\calQ=\eta^{-1}(\calP)$, $\calQ\in \{\sigma(\calP): \sigma \in Aut^*(\calM)\}$. Thus we have shown (\ref{align2}) and $|\Phi^{-1}(\calG)|=|Aut^*(\calM)|$. 
\end{proof}

Lemma \ref{lem:polcount0} below, proved in \cite{BBW1}, will be used to compute $|\Omega^{\calM}([n])|$ for a fixed $\calM\models T_{\calH}$. 

\begin{Lemma}[Lemma 19 in \cite{BBW1}]\label{lem:polcount0}
Suppose $\ell,s,c_1,\ldots, c_{t}$ are integers.  If $c_1,\ldots, c_t\leq s$, then there are rational polynomials $p_1,\ldots, p_{\ell}$ such that the following holds for all $n\geq \ell s+c$, where $c=\sum_{i=1}^tc_i$.
\begin{align}\label{lem19}
\sum_{n_1,\ldots, n_{\ell}>s, \sum_{i=1}^{\ell}n_i=n-c} {n\choose n_1,\ldots, n_{\ell},c_1,\ldots, c_t}=\sum_{i=1}^{\ell}p_i(n)i^n.
\end{align}
Further, if $\ell=1$, then $p_1$ has degree $c$.
\end{Lemma}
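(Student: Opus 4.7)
\textbf{Proof plan for Lemma \ref{lem:polcount0}.} The idea is to separate out the fixed block of parts $c_1,\dots,c_t$ and then compute the remaining restricted multinomial sum by exponential generating functions. First, I would rewrite the summand as
\[
\binom{n}{n_1,\dots,n_\ell,c_1,\dots,c_t}
\;=\; \frac{1}{c_1!\cdots c_t!}\cdot\frac{n!}{(n-c)!}\cdot\binom{n-c}{n_1,\dots,n_\ell},
\]
and set $m=n-c$, so that the LHS of \eqref{lem19} equals
\[
\frac{1}{c_1!\cdots c_t!}\cdot\frac{n!}{(n-c)!}\cdot S_m,\qquad
S_m := \!\!\sum_{\substack{n_1,\dots,n_\ell>s\\ n_1+\cdots+n_\ell=m}}\!\!\binom{m}{n_1,\dots,n_\ell}.
\]
Since $\frac{n!}{(n-c)!}=n(n-1)\cdots(n-c+1)$ is a rational polynomial in $n$ of degree exactly $c$, it suffices to exhibit rational polynomials $\widetilde p_1,\dots,\widetilde p_\ell$ with $S_m=\sum_{j=1}^\ell \widetilde p_j(m)j^m$ for $m$ large; multiplying by $\frac{n!}{(n-c)!}/(c_1!\cdots c_t!)$ and writing $j^{m}=j^{-c}j^{n}$ will then produce the desired $p_j(n)$.

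To compute $S_m$, I would use the standard exponential generating function identity. The EGF for a single part of size strictly greater than $s$ is $\sum_{k>s}x^k/k!=e^x-q_s(x)$, where $q_s(x):=\sum_{k=0}^{s}x^k/k!$. Since the $\ell$ slots are distinguishable and the $m$ elements are labelled, $S_m=m!\,[x^m](e^x-q_s(x))^\ell$. Applying the binomial theorem,
\[
(e^x-q_s(x))^\ell=\sum_{j=0}^{\ell}\binom{\ell}{j}(-1)^{\ell-j}e^{jx}\,q_s(x)^{\ell-j}.
\]
For each fixed $j\in\{1,\dots,\ell\}$, the polynomial $q_s(x)^{\ell-j}=\sum_{i=0}^{s(\ell-j)}a_{i,j}x^i$ has degree $s(\ell-j)\le \ell s$, so once $m\ge \ell s$ we may extract coefficients freely:
\[
m!\,[x^m]e^{jx}q_s(x)^{\ell-j}=\sum_{i=0}^{s(\ell-j)}a_{i,j}\,\frac{m!}{(m-i)!}\,j^{m-i}=\Big(\sum_{i=0}^{s(\ell-j)}a_{i,j}j^{-i}\,m^{\underline{i}}\Big)j^{m},
\]
where $m^{\underline{i}}=m(m-1)\cdots(m-i+1)$ is a polynomial in $m$. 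Summing over $j\ge 1$ with the binomial weights packages these pieces into the desired rational polynomials $\widetilde p_j(m)$. The $j=0$ term contributes $m!\,[x^m]q_s(x)^\ell$, which is identically zero once $m>\ell s$ since $\deg q_s(x)^\ell=\ell s$; this is precisely the source of the lower bound $n\ge \ell s+c$ in the statement, and is where I expect to need the most care (one has to check the $j=0$ term does not spoil the identity exactly at the boundary of the claimed range).

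Finally, for the last clause, when $\ell=1$ the constraint $n_1>s$ together with $n_1=n-c$ leaves exactly one term, so $S_{n-c}=1$ for every $n\ge s+c+1=\ell s+c+1$. Thus
\[
p_1(n)\;=\;\frac{1}{c_1!\cdots c_t!}\cdot\frac{n!}{(n-c)!}\;=\;\frac{n(n-1)\cdots(n-c+1)}{c_1!\cdots c_t!},
\]
a rational polynomial in $n$ of degree $c$ as claimed. The main technical obstacle is the coefficient-extraction bookkeeping in step five (verifying that the resulting polynomials are rational and tracking the minimal $m$ for which the $j=0$ contribution vanishes); everything else is routine manipulation of EGFs and polynomials.
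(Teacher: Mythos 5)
The paper does not actually prove this lemma; it is imported verbatim from \cite{BBW1} (their Lemma 19), so there is no in-paper argument to compare yours against, and your exponential-generating-function proof must be judged on its own. On its own terms it is correct and essentially complete, with one exception precisely at the boundary point you flagged. The reduction to $S_m$ with $m=n-c$, the identity $S_m=m!\,[x^m](e^x-q_s(x))^{\ell}$, the binomial expansion, and the coefficient extraction for $j\geq 1$ are all right (note $m!\,[x^m]e^{jx}x^i=m^{\underline{i}}\,j^{m-i}$ holds for \emph{all} $m\geq 0$ under the falling-factorial convention, so no lower bound on $m$ is even needed for those terms), the resulting $p_j$ are visibly rational, and the $\ell=1$ degree computation is correct. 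The worry you left open does, however, genuinely bite: at $m=\ell s$ the $j=0$ term contributes $(-1)^{\ell}(\ell s)!/(s!)^{\ell}\neq 0$, while $S_{\ell s}=0$ because $n_i>s$ for all $i$ forces $\sum n_i\geq\ell(s+1)>\ell s$. Hence your polynomials satisfy \eqref{lem19} exactly for $n\geq\ell s+c+1$ and fail at $n=\ell s+c$; moreover no other choice of polynomials can repair this, since the sequences $n^k i^n$ are linearly independent, so the $p_i$ are forced by the eventual behaviour of the left-hand side and the resulting value at $n=\ell s+c$ is nonzero. (Concretely, $\ell=1$, $t=0$, $s=1$: the sum is $1$ for $n\geq 2$ and $0$ at $n=1=\ell s+c$.) The defect is therefore in the statement as transcribed --- the correct range is $n>\ell s+c$ --- and not in your argument; since the paper only invokes the lemma for sufficiently large $n$ (Corollary \ref{cor:polcor}), nothing downstream is affected.
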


Note that for any $\calM\models T_{\calH}$, $|\Omega^{\calM}([n])|$ is by definition of the form appearing in the left hand side of Lemma \ref{lem:polcount0}.  We use this along with Proposition \ref{lem:polcount1} to compute the number of $\calG\in \calH_n$ compatible with a fixed $\calM\models T_{\calH}$.

\begin{Corollary}\label{cor:polcor}
Suppose $\calM\models T_{\calH}$ is countably infinite with $\ell$ infinite $\sim$-classes.  Then there are rational polynomials $p_1,\ldots, p_{\ell}$ such that for all sufficiently large $n\in \mathbb{N}$, 
$$
|\{\calN\in \calH_n: \calN\models \theta_{\calM}\}|=\sum_{i=1}^{\ell}p_i(n)i^n.
$$
Further, when $\ell=1$, the degree of $p_1(x)$ is equal to the number of elements of $\calM$ in a finite $\sim$-class.
\end{Corollary}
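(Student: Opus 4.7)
The plan is to express $|\Omega^{\calM}([n])|$ as a multinomial sum of the shape on the left-hand side of Lemma \ref{lem:polcount0}, apply that lemma to get a polynomial/exponential formula, and then divide by the constant $|Aut^*(\calM)|$ using Proposition \ref{lem:polcount1}.

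More precisely, let $A_1,\ldots,A_k$ be the canonical decomposition of $\calM$, let $t=\max\{i\in [k]:A_i\text{ finite}\}$, and set $K=\max\{r,|A_t|\}$, so that the number of infinite $\sim$-classes is $\ell=k-t$. Writing $c_i=|A_i|$ for $i\in[t]$ and $c=\sum_{i=1}^t c_i$, the definition of $\Omega^{\calM}([n])$ gives immediately
\begin{equation*}
|\Omega^{\calM}([n])|=\sum_{\substack{n_{t+1},\ldots,n_k>K\\ n_{t+1}+\cdots+n_k=n-c}}\binom{n}{c_1,\ldots,c_t,n_{t+1},\ldots,n_k}.
\end{equation*}
This is exactly the left-hand side of equation (\ref{lem19}) in Lemma \ref{lem:polcount0}, with $\ell$ as above, $s=K$ and constants $c_1,\ldots,c_t$. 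The hypothesis $c_i\le s$ of Lemma \ref{lem:polcount0} is satisfied because $c_i=|A_i|\le |A_t|\le K=s$ for each $i\in[t]$, by the ordering in the canonical decomposition and the definition of $K$. Hence Lemma \ref{lem:polcount0} furnishes rational polynomials $q_1(x),\ldots,q_\ell(x)$ and a threshold $N$ such that $|\Omega^{\calM}([n])|=\sum_{i=1}^\ell q_i(n)i^n$ for all $n\ge N$.

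Combining this with Proposition \ref{lem:polcount1}, for $n$ sufficiently large we obtain
\begin{equation*}
|\{\calN\in\calH_n:\calN\models\theta_{\calM}\}|=\frac{|\Omega^{\calM}([n])|}{|Aut^*(\calM)|}=\sum_{i=1}^\ell p_i(n)i^n,
\end{equation*}
where $p_i(x)=q_i(x)/|Aut^*(\calM)|$ is still a rational polynomial (note $Aut^*(\calM)$ is a subgroup of the symmetric group on $[k]$, hence finite and nonzero). This establishes the main assertion.

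For the last sentence, assume $\ell=1$. Then Lemma \ref{lem:polcount0} says the polynomial $q_1(x)$ produced above has degree $c=\sum_{i\le t}|A_i|$, which is precisely the number of elements of $\calM$ lying in a finite $\sim$-class. Since $p_1(x)=q_1(x)/|Aut^*(\calM)|$ differs from $q_1$ only by a nonzero scalar, $p_1$ has the same degree, as required. There is no real obstacle here beyond the bookkeeping of matching the indices $c_i,s,\ell$ with the data of the canonical decomposition; the verification $c_i\le K$ is the one place the choice of $K$ (rather than just $r$) in the definition of $\Omega^{\calM}$ is used in an essential way.
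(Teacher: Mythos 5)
Your proof is correct and follows essentially the same route as the paper: write $|\Omega^{\calM}([n])|$ as the multinomial sum in Lemma \ref{lem:polcount0}, apply that lemma to get $\sum_i q_i(n)i^n$, and divide by $|Aut^*(\calM)|$ via Proposition \ref{lem:polcount1}. Your explicit check of the hypothesis $c_i\le s$ (which the paper leaves implicit) is a welcome addition, but otherwise the arguments coincide.
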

\begin{proof}
Let $c_1\leq \ldots\leq c_t$ be the sizes of the finite $\sim$-classes of $\calM$.  Set $c=\sum_{i=1}^tc_i$, and $K=\max\{r,c_t\}$.  By definition, if $\ell=k-t$, then
\begin{align}\label{align11}
|\Omega^{\calM}([n])|=\sum_{n_1,\ldots, n_{\ell}>K, \sum_{i=1}^{\ell}n_i=n-c} {n\choose n_1,\ldots, n_{\ell},c_1,\ldots, c_t}.
\end{align}
By Lemma \ref{lem:polcount0}, there are rational polynomials $q_1,\ldots, q_{\ell}$ such that for large enough $n$, $|\Omega^{\calM}([n])|=\sum_{i=1}^{\ell}q_i(n)i^n$.  Further, if $\ell=1$, then $q_1(x)$ has degree $c$. Combining this with Proposition \ref{lem:polcount1}, we obtain that for sufficiently large $n$, 
$$
|\{\calN\in \calH_n: \calN\models \theta_{\calM}\}|=|\Omega^{\calM}([n])|/|Aut^*(\calM)|=\Big(\sum_{i=1}^{\ell}q_i(n)i^n\Big)/|Aut^*(\calM)|=\sum_{i=1}^{\ell}p_i(n)i^n,
$$
where each $p_i(x)$ is the rational polynomial obtained by dividing the coefficients of $q_i(x)$ by the integer $|Aut^*(\calM)|$.
\end{proof}

We now prove our final lemma, which reduces the problem of counting the number of $\calG\in \calH_n$ compatible with finitely many templates to the problem of counting the number compatible with a single template.

\begin{Lemma}\label{lem:polcount2}
Suppose $\ell\geq 1$ is an integer, $\calM_1,\ldots, \calM_{\ell}\models T_{\calH}$ are countably infinite, and $\theta_{\calM_1}\wedge \ldots \wedge \theta_{\calM_{\ell}}$ is satisfiable.  Then there is $i\in [\ell]$ such that $ \theta_{\calM_1}\wedge \ldots \wedge \theta_{\calM_{\ell}}\equiv \theta_{\calM_i}$.
\end{Lemma}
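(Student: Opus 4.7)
The plan is to identify a distinguished index $i^*\in[\ell]$ such that $\theta_{\calM_{i^*}}$ is the strongest of the formulas $\theta_{\calM_1},\ldots,\theta_{\calM_\ell}$, i.e.\ implies each of them; the reverse implication is immediate since $\theta_{\calM_{i^*}}$ appears as a conjunct. My candidate is $i^*:=\arg\max_i K_i$, where $K_i=\max\{r,n_{t_i}^i\}$ is the threshold constant from the definition of $\Omega^{\calM_i}$. Intuitively, $\theta_{\calM_{i^*}}$ is the most restrictive template because it forces the largest possible set of $\sim$-classes to have prescribed finite sizes.

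To set things up, I would fix a model $\calN\models\bigwedge_i\theta_{\calM_i}$. Since each $\theta_{\calM_i}$ entails $T_\calH$ (via the embedding of $\calN$ into a saturated extension of $\calM_i$), $\calN$ lies in $\calH$ and hence has finitely many $\sim$-classes. Using that the parts of any compatibility witness are exactly the $\sim$-classes of the ambient structure, every $\calM_i$ must have the same number $k$ of $\sim$-classes, equal to that of $\calN$. I would then enumerate the $\sim$-classes of $\calN$ once as $C_1,\ldots,C_k$ and, for each $i$, record the permutation $\sigma_i$ of $[k]$ for which $(C_{\sigma_i(1)},\ldots,C_{\sigma_i(k)})\in\Omega^{\calM_i}(N)$. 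This determines $\Sigma^{\calM_i}_\tau=\sigma_i^{-1}(\Sigma^{\calN}_\tau)$ for every $\tau\in\Delta_{neq}$, where $\Sigma^{\calN}_\tau$ records which cross-products of the $C_m$'s lie in $\tau^{\calN}$.

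The structural fact driving the proof is that $\sigma_i([t_i])=\{m\in[k]:|C_m|\leq K_i\}$, so the sets $\sigma_i([t_i])$ are nested and are all contained in $\sigma_{i^*}([t_{i^*}])$. For the main implication, let $\calG\models\theta_{\calM_{i^*}}$, witnessed by a partition of $G$ indexed by some permutation $\sigma^G_{i^*}$. I propose to define the candidate witness permutation for $\calM_j$-compatibility by the composite $\sigma^G_j:=\sigma^G_{i^*}\,\sigma_{i^*}^{-1}\,\sigma_j$. The $\Sigma$-condition collapses automatically, since
\[
\sigma^G_j(\Sigma^{\calM_j}_\tau)=\sigma^G_{i^*}\sigma_{i^*}^{-1}(\Sigma^{\calN}_\tau)=\sigma^G_{i^*}(\Sigma^{\calM_{i^*}}_\tau)=\Sigma^{\calG}_\tau,
\]
and the size conditions reduce, by a two-case split on whether $\sigma_j(m)$ lies in $\sigma_{i^*}([t_{i^*}])$, to the inequality $K_{i^*}\geq K_j$.

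The main obstacle I anticipate is purely notational: one must juggle three families of permutations ($\sigma_i$ encoding the compatibility of $\calN$ with each $\calM_i$, $\sigma^G_{i^*}$ encoding the given compatibility of $\calG$ with $\calM_{i^*}$, and the derived $\sigma^G_j$) and carefully verify that the size/threshold bookkeeping goes through. Beyond the observation that maximality of $K_{i^*}$ yields nested ``finite parts'', no new ideas should be required.
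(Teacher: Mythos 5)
Your proposal is correct and follows essentially the same route as the paper: fix a common model, read off a permutation matching the two witness partitions, and transport a witness for the most restrictive template to the others, using that the ``designated finite'' classes are nested. The paper selects the distinguished index by maximizing the number of finite $\sim$-classes rather than the threshold $K_i$ (these criteria coincide once the conjunction is satisfiable), reduces to $\ell=2$ by induction, and packages your size bookkeeping as Lemma \ref{lem:part}; otherwise the arguments are the same.
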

\begin{proof}
By induction it suffices to do the proof for $\ell=2$.
Suppose $\calM_1, \calM_2\models T_{\calH}$ are countably infinite and $\theta_{\calM_1}\wedge \theta_{\calM_2}$ is satisfiable.  Clearly this implies $\calM_1$ and $\calM_2$ must have the same number of $\sim$-classes, say this number is $k\geq 1$.  Without loss of generality, assume $\calM_2$ has at least as many finite $\sim$-classes as $\calM_1$. 

By assumption, there is an $\calL$-structure $\calB$ satisfying both $\theta_{\calM_1}$ and $\theta_{\calM_2}$.  Since $\calB\models \theta_{\calM_2}$, there is $(B_1,\ldots, B_k)\in \Omega^{\calM_2}(B)$ so that for each $\tau(x_1,\ldots, x_s)\in \Delta_{neq}$,
 \begin{align}\label{11}
 \tau^{\calB}=\bigcup\{(B_{j_1}\times \ldots \times B_{j_s})\cap B^{\underline{s}}:(j_1,\ldots, j_s)\in \Sigma_{\tau}^{\calM_2}\}.
 \end{align}
Since $\calB\models \theta_{\calM_1}$, there is a permutation $\sigma:[k]\rightarrow [k]$ so that $(B_{\sigma(1)},\ldots, B_{\sigma(k)})\in \Omega^{\calM_1}(B)$ and for all $\tau(x_1,\ldots, x_s)\in \Delta_{neq}$,
 \begin{align}\label{2}
 \tau^{\calB}=\bigcup\{(B_{\sigma(j_1)}\times \ldots \times B_{\sigma(j_s)})\cap B^{\underline{s}}:(j_1,\ldots, j_s)\in \Sigma_{\tau}^{\calM_1}\}.
 \end{align}
Observe (\ref{11}) and (\ref{2}) imply $\sigma(\Sigma_{\tau}^{\calM_1})=\Sigma_{\tau}^{\calM_2}$ for all $\tau\in \Delta_{neq}$.  Further, $\sigma(\Omega^{\calM_2}(B))\cap \Omega^{\calM_1}(B)\neq \emptyset$, so Lemma \ref{lem:part} implies that $\sigma(\Omega^{\calM_2}(Y))\subseteq \Omega^{\calM_1}(Y)$ for any set $Y$.  

We now show $\theta_{\calM_1}\wedge \theta_{\calM_2}\equiv \theta_{\calM_2}$.  Clearly it suffices to show $\theta_{\calM_2}\models \theta_{\calM_1}$.  Fix $\calN\models \theta_{\calM_2}$.  Then there is $(N_1,\ldots, N_k)\in \Omega^{\calM_2}(N)$ so that for each $\tau(x_1,\ldots, x_s)\in \Delta_{neq}$ 
\begin{align}\label{5}
 \tau^{\calN}=\bigcup \{(N_{i_1}\times \ldots \times N_{i_s})\cap N^{\underline{s}}: (i_1,\ldots, i_s)\in \Sigma_{\tau}^{\calM_2}\}.
\end{align}
Since $\sigma(\Omega^{\calM_2}(N))\subseteq \Omega^{\calM_1}(N)$, we have $(N_{\sigma(1)},\ldots, N_{\sigma(k)})\in \Omega^{\calM_1}(N)$.  Further, for each $\tau(x_1,\ldots, x_s)\in \Delta_{neq}$, $\sigma(\Sigma_{\tau}^{\calM_1})=\Sigma_{\tau}^{\calM_2}$, so  (\ref{5}) implies that 
\begin{align*}
\tau^{\calN}&=\bigcup\{(N_{\sigma(i_1)}\times \ldots \times N_{\sigma(i_s)})\cap N^{\underline{s}}:(i_1,\ldots, i_s)\in \Sigma_{\tau}^{\calM_1}\}.
\end{align*}
Thus by definition, $\calN\models \theta_{\calM_1}$.
\end{proof}

We now prove the main result of this subsection. The proof uses the compactness theorem and the preceding lemma to show the speed of $\calH$ is a linear combination of finitely many functions of the form appearing in Corollary \ref{cor:polcor}.

\vspace{5mm}

\begin{Theorem}\label{thm:efthm1}
There are $k\in \mathbb{N}$ and rational polynomials $p_1(x),\ldots, p_k(x)$ such that for sufficiently large $n$, $|\calH_n|=\sum_{i=1}^kp_i(n)i^n$. 
\end{Theorem}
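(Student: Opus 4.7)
The plan is to combine Corollary \ref{cor:polcor} (which counts structures compatible with a single infinite template) with a compactness argument that produces finitely many templates sufficient to cover $\calH$, and an inclusion--exclusion that collapses thanks to Lemma \ref{lem:polcount2}. Since every countable model of $T_\calH$ is itself an element of $\calH$, and $\calM \models \theta_\calM$ for every countably infinite $\calM \in \calH$, one expects that a finite list of $\theta_{\calM_i}$'s should already account for every sufficiently large $\calN \in \calH$.

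First I would argue via compactness that there exist countably infinite $\calM_1, \ldots, \calM_s \in \calH$ and $N \in \mathbb{N}$ such that every $\calN \in \calH$ with at least $N$ elements satisfies $\theta_{\calM_i}$ for some $i \in [s]$. Consider the theory
\[
T_\calH \cup \{\neg \theta_\calM : \calM \in \calH \text{ countably infinite}\} \cup \{\sigma_n : n \geq 1\},
\]
where $\sigma_n$ asserts that the universe has at least $n$ elements. If this theory were consistent, L\"owenheim--Skolem would yield a countably infinite model $\calN^*$; but $\calN^* \models T_\calH$ forces $\calN^* \in \calH$, hence $\calN^* \models \theta_{\calN^*}$, contradicting $\calN^* \models \neg \theta_{\calN^*}$. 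Compactness then produces the desired finite list $\calM_1, \ldots, \calM_s$ and threshold $N$.

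Given such a covering, I would apply inclusion--exclusion to write, for all $n \geq N$,
\[
|\calH_n| = \sum_{\emptyset \neq I \subseteq [s]} (-1)^{|I|+1} \Big| \Big\{ \calN \in \calH_n : \calN \models \bigwedge_{i \in I} \theta_{\calM_i} \Big\} \Big|.
\]
For each such $I$, Lemma \ref{lem:polcount2} implies that either $\bigwedge_{i \in I} \theta_{\calM_i}$ is unsatisfiable (so that term is $0$) or it is logically equivalent to $\theta_{\calM_j}$ for some $j \in I$. In the latter case Corollary \ref{cor:polcor} provides rational polynomials $p^I_1,\ldots,p^I_{\ell_I}$ with $|\{\calN \in \calH_n : \calN \models \bigwedge_{i \in I} \theta_{\calM_i}\}| = \sum_{i=1}^{\ell_I} p^I_i(n) i^n$ for sufficiently large $n$. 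Collecting coefficients of $i^n$ across these finitely many signed contributions yields the desired expression $|\calH_n| = \sum_{i=1}^k p_i(n) i^n$.

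The main obstacle I anticipate is justifying the compactness step cleanly, in particular the observation that $\calN^* \models \theta_{\calN^*}$ for every countably infinite $\calN^* \in \calH$. This amounts to unwinding the definition of $\theta_{\calM}$ and verifying that the canonical decomposition of $\calN^*$ is itself a witness to its own sentence. Once this is in hand, the remainder is bookkeeping: ensuring that the ``sufficiently large $n$'' thresholds from Corollary \ref{cor:polcor} can be taken uniform over the finitely many $\calM_i$, and confirming that the polynomial--exponential form is preserved under signed summation of finitely many such expressions.
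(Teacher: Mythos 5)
Your proposal is correct and follows essentially the same route as the paper: the same compactness argument (the paper's ``clearly inconsistent'' is exactly your observation that a countably infinite $\calN^*\in\calH$ satisfies $\theta_{\calN^*}$, since its canonical decomposition lies in $\Omega^{\calN^*}(N^*)$), the same inclusion--exclusion, and the same collapse of each conjunction via Lemma \ref{lem:polcount2} followed by Corollary \ref{cor:polcor}. No substantive differences.
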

\begin{proof}
Clearly the following set of sentences is inconsistent.
$$
T_{\calH}\cup \{\neg \theta_{\calM}: \calM\models T_{\calH}\text{ is countably infinite}\}\cup \{\exists x_1 \ldots \exists x_n\bigwedge_{i\neq j}x_i\neq x_j: n\geq 1\}.
$$
Thus by compactness, there are finitely many $\theta_{\calM_1},\ldots, \theta_{\calM_k}$ such that for sufficiently large $n$, any element of $\calH_n$ must satisfy $\bigvee_{i=1}^k \theta_{\calM_i}$.  Combining this with the inclusion/exclusion principle yields that for large $n$,
\begin{align*}
|\calH_n|&=|\{\calM\in \calH_n: \calM\models \theta_{\calM_1}\vee \ldots \vee \theta_{\calM_k}\}|\\
&=\sum_{u=1}^k(-1)^{u+1}\Big(\sum_{1\leq i_1<\ldots <i_{u}\leq k}|\{\calM\in \calH_n: \calM\models \theta_{\calM_{i_1}}\wedge \ldots \wedge \theta_{\calM_{i_u}}\}|\Big).
\end{align*}
Apply Corollary \ref{cor:polcor} and Lemma \ref{lem:polcount2} to finish the proof.
\end{proof}

Theorem \ref{thm:efthm1} proves Theorem \ref{thm:efthm}, since $\calH$ was an arbitrary basic non-trivial hereditary $\calL$-property.  We have actually shown more about basic properties, which we sum up in Corollary \ref{cor:polcount1} below.  We leave the proof to the reader, as it follows from the proof of Theorem \ref{thm:efthm1} and the fact that for any $\calM\models T_{\calH}$, $\{\calN\in \calH: \calN\models \theta_{\calM}\}\subseteq age(\calM)\subseteq \calH$.
\begin{Corollary}\label{cor:polcount1}
Suppose $\calH$ is a non-trivial basic hereditary $\calL$-property.  Then there is a trivial hereditary $\calL$-property $\calF$ and finitely many countably infinite basic $\calL$-structures $\calM_1,\ldots, \calM_m$ such that $\calH=\calF\cup \bigcup_{i=1}^mage(\calM_i)$.  

Moreover the following hold, where for each $1\leq i\leq m$, $m_i$ is the number of elements of $\calM_i$ in a finite $\sim$-class and $\ell_i$ is the number of infinite $\sim$-classes of $\calM_i$.
\begin{enumerate}
\item If $\ell=\max\{\ell_i:i\in [m]\}=1$ then for large $n$,  $|\calH_n|=p(n)$ where $p(n)$ is a rational polynomial of degree $c:=\max\{m_i: i\in [m]\}$.
\item If $\ell =\max\{\ell_i: i\in [m]\}\geq 2$, then for large $n$,  $|\calH_n|=\sum_{i=1}^{\ell}p_i(n)i^n$ where each $p_i(n)$ is a rational polynomial.
\end{enumerate}
\end{Corollary}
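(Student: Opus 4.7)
The plan is to recycle the compactness step that opens the proof of Theorem \ref{thm:efthm1}, and then refine its output in two small ways: carve off a bounded-size ``remainder'' so that the decomposition is genuinely hereditary, and cull the template list so that the inclusion--exclusion used in that proof collapses to a clean sum.

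First I would invoke compactness exactly as in the proof of Theorem \ref{thm:efthm1} to extract countably infinite $\calM_1,\ldots,\calM_k\in\calH$ and a threshold $N$ such that every $\calN\in\calH$ with $|\calN|>N$ (and every infinite $\calN\in\calH$) satisfies $\bigvee_{i=1}^k\theta_{\calM_i}$. Set $\calF:=\{\calN\in\calH:|\calN|\leq N\}$, which is a trivial hereditary $\calL$-property (closed under substructure for size reasons, and finite up to isomorphism). Using the stated inclusion $\{\calN\in\calH:\calN\models\theta_{\calM}\}\subseteq age(\calM)\subseteq\calH$ I would then read off $\calH=\calF\cup\bigcup_{i=1}^{k}age(\calM_i)$. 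Each $\calM_i$ is basic because $age(\calM_i)\subseteq\calH$ and $\calH$ is basic by hypothesis.

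Next I would cull: whenever $i\neq j$ with $\theta_{\calM_i}\models\theta_{\calM_j}$, discard $\calM_i$. Coverage is preserved, since any $\calN$ with $\calN\models\theta_{\calM_i}$ automatically satisfies $\theta_{\calM_j}$ and so lies in $age(\calM_j)$ by the same hint; iterating leaves a list $\calM_1,\ldots,\calM_m$ that still decomposes $\calH$. By Lemma \ref{lem:polcount2}, any two-fold conjunction $\theta_{\calM_i}\wedge\theta_{\calM_j}$ with $i\neq j$ surviving in this list must now be \emph{unsatisfiable}: if it were satisfiable it would be equivalent to one of its conjuncts, producing an implication between two survivors and contradicting minimality. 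Hence every higher-order conjunction is unsatisfiable too, and the inclusion--exclusion identity inside the proof of Theorem \ref{thm:efthm1} collapses, for large $n$, to
\begin{equation*}
|\calH_n|=\sum_{i=1}^{m}\bigl|\{\calN\in\calH_n:\calN\models\theta_{\calM_i}\}\bigr|.
\end{equation*}

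Finally I would feed each summand through Corollary \ref{cor:polcor}. When $\ell=\max_i\ell_i\geq 2$, the $i$-th summand has the form $\sum_{j=1}^{\ell_i}p_{ij}(n)j^n$, and collecting by exponential base yields the desired $|\calH_n|=\sum_{j=1}^{\ell}q_j(n)j^n$ with $q_j(n)=\sum_{i:\ell_i\geq j}p_{ij}(n)$. When $\ell=1$ every $\ell_i=1$, and the ``further'' clause of Corollary \ref{cor:polcor} makes each summand a rational polynomial of degree exactly $m_i$; its leading coefficient is strictly positive because the explicit multinomial expression $\binom{n}{c_1,\ldots,c_t,n-c}/|Aut^*(\calM_i)|$ from the proof of that corollary has positive top coefficient. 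Summing polynomials with positive leading coefficients cannot cancel the top degree, so $|\calH_n|$ is a rational polynomial of degree exactly $c=\max_i m_i$. The step requiring the most care is the cull, namely confirming that discarding a template never breaks the decomposition and then that Lemma \ref{lem:polcount2} truly empties every higher-order intersection; once these bookkeeping points are settled, both asymptotic statements fall out of Corollary \ref{cor:polcor} with no further work.
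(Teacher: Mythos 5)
Your proposal is correct and follows the route the paper intends: the paper leaves this corollary to the reader as a consequence of the proof of Theorem \ref{thm:efthm1} together with the inclusion $\{\calN\in\calH:\calN\models\theta_{\calM}\}\subseteq age(\calM)\subseteq\calH$, and your culling step is a clean way to collapse the signed inclusion--exclusion into a disjoint sum, while your positivity argument for the leading coefficients of the multinomial expressions in case (1) is exactly what is needed to pin the degree down to $c$ rather than merely at most $c$. One small repair to the step you yourself flag: the rule ``whenever $\theta_{\calM_i}\models\theta_{\calM_j}$, discard $\calM_i$'' could, applied symmetrically, delete both members of a pair with $\theta_{\calM_i}\equiv\theta_{\calM_j}$ and thereby break coverage, so you should first pass to $\models$-equivalence classes of the templates, retain one representative of each maximal class, and only then conclude from Lemma \ref{lem:polcount2} that every surviving pairwise conjunction is unsatisfiable.
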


The structural dichotomy between cases (1) and (2) in Corollary \ref{cor:polcount1} also made an appearance in  \cite{HM:growth}. Specifically, in the terminology of \cite{HM:growth}, case (1) in Corollary \ref{cor:polcount1} holds if and only if every model $\calM\models T_{\calH}$ is absolutely $|M|$-ubiquitous.

\section{Totally bounded properties}\label{sec:tb}
In this section we prove results about a very restricted class of properties, namely those which are \emph{totally bounded}. The main result of this section is Theorem \ref{thm:tb} which tells us about the speeds of totally bounded properties.  Totally bounded hereditary $\calL$-properties behave much like hereditary graph properties with uniformly bounded degree, and our proofs in this section largely follow the corresponding proofs for these kinds of graphs (see Lemmas 24 and 25 in \cite{BBW1}). The results of this subsection will be used in Section \ref{sec:ma} to prove counting dichotomies for more general classes.

In this section $\calL$ is a finite relational language of arity $r\geq 0$.  Throughout, $C$ denotes the set of constants of $\calL$.

\begin{Definition}\label{def:tb}{\em 
An $\calL$-structure $\calM$ is \emph{totally $k$-bounded} if for every relation symbol $R(x_1,\ldots, x_s)$ in $\calL$ and every partition $[s]=I\cup J$ into nonempty sets $I$ and $J$, 
$$
\calM\models \forall \xbar_I\exists^{<k}\xbar_JR(x_1,\ldots, x_s).
$$
A hereditary $\calL$-property $\calH$ is \emph{totally bounded} if there is an integer $k$ such that every $\calM\in \calH$ is totally $k$-bounded.
}
\end{Definition}

For example, if $k\in \mathbb{N}$ and $\calH$ is a hereditary graph property, then $\calH$ is totally $k$-bounded if and only if all of the graphs in $\calH$ have maximum degree less than $k$.  Note that an $\calL$-structure $\calM$ is totally $k$-bounded if and only if for every relation $R(x_1,\ldots, x_s)$ in $\calL$, and every partition $[s]=I\cup J$ \emph{with $|I|=1$}, $\calM\models \forall \xbar_I\exists^{<k}\xbar_JR(x_1,\ldots, x_s)$.

We begin by considering a generalization of the notion of a connected component in a graph. Given an $\calL$-structure $\calM$ and $a,b\in M$, a \emph{path from $a$ to $b$} is a finite sequence, $\abar_1,\ldots, \abar_k$, of tuples of elements of $M$ such that the following hold.
\begin{enumerate}
\item For each $1\leq i\leq k-1$, $(\cup \abar_i)\cap (\cup \abar_{i+1})\neq \emptyset$.
\item $a\in \cup \abar_1$ and $b\in \cup \abar_k$.
\item $\calM\models \psi_{1}(\abar_1)\wedge \ldots \wedge \psi_{k}(\abar_k)$ for some relations $\psi_{1}(\xbar_{1}),\ldots, \psi_{k}(\xbar_{k})$ from $\calL$.
\end{enumerate}
The \emph{length} of the path is $k$.  We say a subset $A\subseteq M$ is \emph{connected} if for all $a\neq b\in A$, there is a path from $a$ to $b$ which is contained in $A$. When $\calM$ is a graph, then these are just the usual graph theoretic notions of a path and of a connected set.  

\begin{Definition}{\em
Suppose $\calM$ is an $\calL$-structure, and $A\subseteq M$.  We say that $A$ is a \emph{component in $\calM$} if it is a maximal connected set, i.e. if $A$ is connected and for all $a\in A$ and $c\in M$, if there is a path from $a$ to $c$, then $c\in A$.}
\end{Definition}

We would like to point out that the notion of components and $\sim$-classes are different. For example, if $\calM$ is an infinite graph with no edges, then $\calM$ has infinitely many components (since each vertex is in a component of size $1$), but only one $\sim$-class.  On the other hand, if $\calM$ is an infinite path, then it it has only one component, but infinitely many $\sim$-classes.

Given a hereditary $\calL$-property $\calH$ and $m\in \mathbb{N}^{>0}$, we say $\calH$ has \emph{infinitely many components of size $m$} if there is $\calM\models T_{\calH}$ such that $\calM$ has infinitely many distinct components of size $m$. We say $\calH$ has \emph{infinitely many components} if it there is $\calM\models T_{\calH}$ with infinitely many components.  Otherwise we say $\calH$ has finitely many components. We say $\calH$ has \emph{finite components} if there is $K\in \mathbb{N}$ such that for every $\calM\models T_{\calH}$, every component of $\calM$ has size at most $K$.  Otherwise $\calH$ has \emph{infinite components}. Note that if $\calH$ is non-trivial and has finite components, then it must have infinitely many components.

Our first goal is to prove two lemmas about hereditary $\calL$-properties with restrictions placed on their components. Specifically, Lemma \ref{lem:lowerboundcomp} will give us lower bounds for any hereditary $\calL$-property with infinitely many components of a fixed finite size, and Lemma \ref{lem:exact} will characterize the speeds of hereditary $\calL$-properties with finite components.  We require the following lemma, which which appears within the proof of Lemma 24 in \cite{BBW1}.

\begin{Lemma}\label{lem:lb}
Suppose $k, n\in \mathbb{N}^{>0}$ and $k\ll n$. Then the number of ways to partition $k\lfloor n/k\rfloor$ into $\lfloor n/k\rfloor$ parts of size $k$ is at least $n^{n(1-1/k-o(1))}$.
\end{Lemma}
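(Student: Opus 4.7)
The plan is to compute the exact partition count and apply Stirling's formula. Writing $m = \lfloor n/k \rfloor$, the number of ways to partition $[km]$ into $m$ unordered blocks of size $k$ is exactly
$$
N(n,k) \;=\; \frac{(km)!}{(k!)^{m}\, m!}.
$$
So the task reduces to showing $\log N(n,k) \geq n(1-1/k-o(1))\log n$ as $n \to \infty$ with $k$ fixed (or at least $k \ll n$, which is how the hypothesis should be read).

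First, I would apply Stirling's approximation in the form $\log(N!) = N\log N - N + O(\log N)$ to each factorial. The dominant term from $(km)!$ is $km\log(km)$, from $m!$ is $m\log m$, and $(k!)^m$ contributes only $m\log(k!)$, which is $O(n\log k /k)$ and therefore lower order compared to $n\log n$. Collecting main terms,
$$
\log N(n,k) \;=\; km\log(km) - m\log m - km + m - m\log(k!) + O(\log n),
$$
and regrouping $km\log(km) - m\log m = m\bigl(k\log k + (k-1)\log m\bigr)$ gives
$$
\log N(n,k) \;=\; (k-1)m\log m + O(m\log k).
$$

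Finally, I would substitute $m = \lfloor n/k\rfloor = (n/k)(1-o(1))$, so $(k-1)m = n(1-1/k)(1-o(1))$ and $\log m = \log n - \log k + o(1) = (1-o(1))\log n$ (using $k \ll n$). Therefore
$$
\log N(n,k) \;\geq\; n(1-1/k)(1-o(1))\log n,
$$
which is exactly $n(1-1/k-o(1))\log n$, proving the claim. There is no real obstacle: everything is a direct Stirling computation, and the only mild care needed is tracking that the $\log k$ terms are absorbed into the $o(1)$ under the hypothesis $k \ll n$.
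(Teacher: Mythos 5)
Your proof is correct and follows essentially the same route as the paper: both write the count exactly as $\frac{(k\lfloor n/k\rfloor)!}{(k!)^{\lfloor n/k\rfloor}\lfloor n/k\rfloor!}$ and then apply Stirling's approximation, with the $(k!)^{\lfloor n/k\rfloor}$ and $\lfloor n/k\rfloor!$ contributions absorbed into the $o(1)$. The only difference is cosmetic (you work with logarithms throughout rather than with the $m^{m+1/2}e^{-m}$ form directly).
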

\begin{proof}
Set $m=k\lfloor n/k\rfloor$ and let $f(n)$ be the number of ways to partition $[m]$ into $\ell:=\lfloor n/k\rfloor$ parts, each of size $k$.  Clearly $f(n)\geq {m\choose k,\ldots, k}\frac{1}{\ell!}$.  Using Stirling's approximation and the definitions of $\ell$ and $m$, we obtain the following.
$$
{m\choose k,\ldots, k}\frac{1}{\ell!}= \frac{m!}{(k!)^{\lfloor \frac{n}{k}\rfloor}\lfloor \frac{n}{k}\rfloor!}\geq \frac{\sqrt{2\pi}m^{m+\frac{1}{2}}e^{-m}}{(k!)^{\lfloor \frac{n}{k}\rfloor}( \frac{n}{k})^{\frac{n}{k}+\frac{1}{2}}e^{1-\frac{n}{k}}}=m^{m(1-o(1))}n^{-\frac{n}{k}(1+o(1))}=n^{n(1-\frac{1}{k}-o(1))},
$$
where the last two equalities are because $n-m<k$ and $n\gg k$. Thus $f(n)\geq n^{n(1-1/k-o(1))}$.
\end{proof}

\begin{Lemma}\label{lem:lowerboundcomp}
Suppose $\calH$ is a hereditary $\calL$-property and $k\geq 2$ is an integer. Assume $\calH$ has infinitely many components of size $k$. Then $|\calH_n|\geq n^{(1-1/k-o(1))n}$. 
\end{Lemma}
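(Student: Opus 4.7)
The plan is to construct many distinct elements of $\calH_n$ by varying how a subset of $[n]$ of size $k\lfloor n/k\rfloor$ is partitioned into blocks of size $k$, placing a fixed $k$-element ``template'' structure on each block, and then invoking Lemma \ref{lem:lb} to count the resulting partitions. This is the direct analog of the bounded-degree graph argument in \cite{BBW1}.

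First I would fix the template. Because $\calL$ is finite there are only finitely many isomorphism types of $\calL$-structures on $k$ elements, so pigeonhole applied to the infinitely many components of size $k$ afforded by hypothesis yields some $\calM\in \calH$ and some $\calL$-structure $\calC_0$ on $k$ elements such that $\calM$ has infinitely many distinct components isomorphic to $\calC_0$. Each such component is, by definition, a connected subset of $\calM$, so the underlying set of $\calC_0$ is connected in $\calC_0$; moreover the component property forces every tuple in every atomic relation of $\calM$ that meets a given component $C$ to lie entirely inside $C$. Given large $n$, set $\ell=\lfloor n/k\rfloor$, $m=k\ell$, and pick distinct components $C_1,\dots,C_\ell$ of $\calM$ isomorphic to $\calC_0$ together with a set $D\subseteq M\setminus(C_1\cup\cdots\cup C_\ell)$ of size $n-m$. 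Then $\calM^*:=\calM[C_1\cup\cdots\cup C_\ell\cup D]$ lies in $\calH$ by heredity, and by the previous observation, $C_1,\dots,C_\ell$ are pairwise disjoint components of $\calM^*$ with no atomic relation tying any $C_i$ to $D$ or to any $C_j$ for $j\neq i$.

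For each unordered partition $\calP=\{P_1,\dots,P_\ell\}$ of $[m]$ into $\ell$ blocks of size $k$, choose any bijection $\beta_\calP\colon [n]\to C_1\cup\cdots\cup C_\ell\cup D$ sending $\{m+1,\dots,n\}$ onto $D$ and each block $P_i$ bijectively onto some $C_{\pi_\calP(i)}$, and define $\calN_\calP$ to be the unique $\calL$-structure on $[n]$ making $\beta_\calP$ an isomorphism $\calN_\calP\to \calM^*$. Then $\calN_\calP\cong \calM^*\in \calH$, so $\calN_\calP\in \calH_n$. To see the assignment $\calP\mapsto \calN_\calP$ is injective, observe that each $P_i$ is a component of $\calN_\calP$ (connected because $\calC_0$ is, and maximal because $\calM^*$ has no relations tying $C_{\pi_\calP(i)}$ to anything outside itself), so $\{P_1,\dots,P_\ell\}$ can be read off intrinsically from $\calN_\calP$ as the set of components of $\calN_\calP$ that are contained in $[m]$. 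Thus distinct partitions yield distinct structures, and Lemma \ref{lem:lb} furnishes at least $n^{n(1-1/k-o(1))}$ such partitions, giving $|\calH_n|\geq n^{(1-1/k-o(1))n}$.

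The main subtlety I anticipate is ensuring that the partition $\calP$ really is a structural invariant of $\calN_\calP$: this is precisely what connectedness of $\calC_0$ (guaranteed by $k\geq 2$) and the component-level absence of cross-relations in $\calM^*$ are used for. The $n-m<k$ leftover vertices of $[n]$ are absorbed painlessly into the auxiliary set $D$.
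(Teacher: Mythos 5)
Your proof is correct and follows essentially the same route as the paper's: restrict $\calM$ to $\ell=\lfloor n/k\rfloor$ of its size-$k$ components plus a small leftover set, transport this substructure to $[n]$ in many ways, recover the partition intrinsically as the set of size-$k$ components sitting inside $[m]$, and invoke Lemma \ref{lem:lb}. The only detail the paper treats that you gloss over is that $\calL$ may contain constant symbols in Section \ref{sec:tb}, so the substructure must contain $C^{\calM}$ and the chosen components should be taken disjoint from it; this costs only a bounded number of vertices and does not affect the asymptotics.
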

\begin{proof}
Suppose $\calH$ has infinitely many components of size $k$. By definition, there is $\calM\models T_{\calH}$ with infinitely many distinct components, each of size $k$. Let $D=C^{\calM}$.  Since $\calL$ is finite, we can find  distinct components $\{A_i: i\in \mathbb{N}\}$, each of which has size $k$ and is disjoint from $D$ (since $D$ is finite, and each element of $D$ is in at most one component).

Fix $n\gg |D|$ and set $\ell=\lfloor (n-|D|)/k\rfloor$.  Now choose $B\subseteq A_{\ell+1}$ of size $n-|D|-k\ell$ and set $A=A_1\sqcup \ldots \sqcup A_{\ell}\sqcup B\sqcup D$.  Observe $|B|<k$, $|A|=n$ and for every bijection $f:A\rightarrow [n]$, $f(\calM[A])\in \calH_n$.  Note that the only components of size $k$ in $f(\calM[A])$ which are also disjoint from $f(D)$ are $f(A_1),\ldots, f(A_{\ell})$.

Fix $f_0:D\cup B\rightarrow [n]$.  Suppose $f$ and $f'$ are bijections from $A$ to $[n]$ extending $f_0$ with $\{f(A_1),\ldots, f(A_{\ell})\}\neq \{f'(A_1),\ldots, f'(A_{\ell})\}$. Then clearly $f(\calM[A])\neq f'(\calM[A])$, since these structures disagree on what are the components of size $k$ disjoint from $f_0(D)$.  Therefore $|\calH_n|$ is at least the number of distinct ways to choose $\ell$ disjoint sets of size $k$ in $[n_0]$ where $n_0=n-|B|-|D|$.  By Lemma \ref{lem:lb}, this is at least $n_0^{n_0(1-1/k-o(1))}$.  Since $|B|<k$, $|D|$ is constant, and $n$ is large, this shows $|\calH_n|\geq n^{(1-1/k-o(1))n}$.  
\end{proof}

We now consider the case where $\calH$ has finite components.  We will use the following fact, which is a consequence of the inequality of arithmetic and geometric means.

\begin{Fact}\label{factorial}
Suppose $a_1,\ldots, a_t\in \mathbb{N}^{>0}$. Then $a_1!\ldots a_t!\geq (((\sum_{i=1}^t a_i)/t)!)^t$.
\end{Fact}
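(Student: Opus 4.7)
The plan is a smoothing (majorization) argument. Write $S = \sum_{i=1}^t a_i$ so $\bar a = S/t$, and let $m$ denote the minimum of $\prod_{i=1}^t b_i!$ taken over all tuples $(b_1,\ldots,b_t) \in (\mathbb{N}^{>0})^t$ with $\sum b_i = S$. The heart of the proof is the one-line observation that if some minimizer has $b_i \geq b_j + 2$, then the swap $(b_i, b_j) \mapsto (b_i - 1, b_j + 1)$ preserves $\sum b_i$ and scales $\prod b_i!$ by $(b_j + 1)/b_i < 1$, strictly decreasing the product and contradicting minimality. Iterating this ``balancing'' move forces every minimizer to have all entries in $\{\lfloor \bar a\rfloor, \lceil \bar a\rceil\}$.

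In the case $t \mid S$ this already identifies $m = (\bar a!)^t$ outright, giving $\prod_{i=1}^t a_i! \geq m = (\bar a!)^t$ as desired. For the case $t \nmid S$, write $S = t\lfloor \bar a\rfloor + s$ with $0 < s < t$; the balanced minimizer then consists of $s$ copies of $\lceil \bar a\rceil$ and $t - s$ copies of $\lfloor \bar a\rfloor$, so $m = (\lceil \bar a\rceil!)^s (\lfloor \bar a\rfloor!)^{t-s}$. Interpreting $(\bar a)! := \Gamma(\bar a + 1)$, the remaining inequality $\Gamma(\bar a + 1)^t \leq m$ follows from log-convexity of $\Gamma$ (a classical consequence of H\"older's inequality, itself an AM-GM-type statement) applied at the convex combination $\bar a + 1 = \tfrac{s}{t}(\lceil \bar a\rceil + 1) + \tfrac{t-s}{t}(\lfloor \bar a\rfloor + 1)$.

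The combinatorial content --- the one-line balancing swap --- is elementary, and this single step already suffices in the clean case $t \mid S$. The only real subtlety, and so the main potential obstacle, is fixing the interpretation of $(\bar a)!$ when $\bar a \notin \mathbb{N}$; I handle this via the gamma function, though I expect the applications of this fact (e.g.\ in the component-counting arguments of the present section) to involve sums $S$ divisible by $t$, in which case the log-convexity step is not needed at all.
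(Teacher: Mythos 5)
Your proof is correct. The paper offers no argument for this Fact beyond the remark that it ``is a consequence of the inequality of arithmetic and geometric means,'' which points to the one-step route: $\log\Gamma$ is convex (Bohr--Mollerup, or H\"older's inequality applied to the integral representation), so Jensen gives $\log\Gamma\bigl(\tfrac{1}{t}\sum_i (a_i+1)\bigr)\leq \tfrac{1}{t}\sum_i\log\Gamma(a_i+1)$, which is exactly the claim. Your route is different in emphasis: the discrete smoothing swap $(b_i,b_j)\mapsto(b_i-1,b_j+1)$, which rescales the product by $(b_j+1)/b_i<1$, is a correct and fully elementary argument that reduces everything to the balanced configuration, and it settles the case $t\mid S$ without any analytic input; you then invoke log-convexity of $\Gamma$ only to absorb the fractional remainder, and your convex-combination identity $\bar a+1=\tfrac{s}{t}(\lceil\bar a\rceil+1)+\tfrac{t-s}{t}(\lfloor\bar a\rfloor+1)$ checks out. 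What your version buys is a self-contained combinatorial core plus an explicit resolution of an ambiguity the paper glosses over: the statement's $\bigl((\sum_i a_i)/t\bigr)!$ is undefined for non-integer arguments, and the paper does apply the Fact (in the proof of Lemma \ref{lem:exact}) at a non-integer average, so your choice to read $x!$ as $\Gamma(x+1)$ is not optional pedantry but is needed for the Fact to be used as stated. The direct Jensen argument is shorter; yours is more elementary where it can be and equally rigorous where it cannot.
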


\begin{Lemma}\label{lem:exact}
Suppose $\calH$ has finite components and $k$ is the largest integer such that $\calH$ contains infinitely many components of size $k$. If $k=1$ then $\calH$ is basic.  If $k\geq 2$ then $|\calH_n|= n^{(1-1/k-o(1))n}$.
\end{Lemma}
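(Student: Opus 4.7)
First, by compactness I would establish that for each $m$ with $k<m\leq K$ (where $K$ uniformly bounds component sizes, by the finite-components hypothesis), there is a finite $B_m$ such that no $\calM\in\calH$ has more than $B_m$ components of size $m$. The key point is that ``$\{c_1,\ldots,c_m\}$ is a component of size $m$'' is expressible by a single first-order formula $\phi_m(\cbar)$: within a fixed finite set of size $m$ connectedness is a finite condition, and maximality---that no outside element shares a relation tuple with any $c_i$---is a universal condition. If no uniform bound existed, then for each $N$ the theory $T_\calH$ would be consistent with ``there exist $N$ pairwise disjoint tuples $\cbar^{(i)}$ satisfying $\phi_m$'', so compactness would produce a model of $T_\calH$ with infinitely many size-$m$ components, contradicting the maximality of $k$.

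For the case $k=1$, I would show $\calH$ is basic by bounding the number of $\sim$-classes in any $\calM\in\calH$. Elements lying in components of size $\geq 2$ number at most $\sum_{m=2}^K mB_m$, contributing at most that many $\sim$-classes. For any remaining element $a$, the set $\{a\}$ is a component, so the only atomic facts involving $a$ are those whose arguments are all $a$ itself (or constants interpreted as $a$). Consequently, two size-$1$-component elements $a,b$, neither interpreting a constant and sharing the same quantifier-free $1$-type, are $\sim$-equivalent. Since $\calL$ is finite there are only finitely many such $1$-types, and at most $|C|$ elements interpret constants, giving a uniform bound on the number of $\sim$-classes independent of $\calM$.

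For the case $k\geq 2$, the lower bound $|\calH_n|\geq n^{(1-1/k-o(1))n}$ follows immediately from Lemma \ref{lem:lowerboundcomp}, since by assumption $\calH$ has infinitely many components of size $k$. For the matching upper bound, every $\calM\in\calH_n$ is determined by the partition of $[n]$ into its components together with the induced $\calL$-structure on each component. Writing $n_j$ for the number of components of size $j$, the bounds $n_j\leq B_j$ for $j>k$ force $\sum_{j>k}jn_j\leq M_0$ for a constant $M_0$, leaving $n-O(1)$ elements to distribute among parts of size at most $k$. The number of unordered partitions of $[n]$ with prescribed sizes is at most
\[
\binom{n}{M_0}\cdot\frac{(n-M_0)!}{\prod_{j\leq k}(j!)^{n_j}\,n_j!},
\]
and the number of $\calL$-structures on a fixed part of size $j$ is at most $2^{|\calL|j^r}\leq D^j$ for a constant $D$, contributing at most $D^n$ in total. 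Summing over the polynomially many compositions $(n_j)_{j\leq k}$ with $\sum_{j\leq k}jn_j=n-O(1)$, a Stirling computation paralleling Lemma \ref{lem:lb} shows that on writing $p_j:=jn_j/n$ the log of the multinomial equals $(1-\sum_j p_j/j)n\log n+O(n)$; under the constraints $p_j\geq 0$, $\sum_j p_j=1$, and $p_j=0$ for $j>k$, the sum $\sum_j p_j/j$ is minimized at $p_k=1$ with value $1/k$. Combining these bounds gives $|\calH_n|\leq n^{(1-1/k+o(1))n}$.

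The principal obstacle is the Stirling analysis in case $k\geq 2$, verifying that the sum over all admissible compositions only contributes a subdominant factor relative to the ``all components of size $k$'' term. A secondary delicate point is formalizing ``is a component of size $m$'' as a single first-order sentence, which relies on the uniform bound $K$ on component sizes so that reachability within any one component becomes a bounded-length (hence first-order) condition.
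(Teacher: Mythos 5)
Your proof is correct and follows essentially the same route as the paper's: the $k=1$ case bounds the number of $\sim$-classes in the identical way, the lower bound for $k\geq 2$ is the same appeal to Lemma \ref{lem:lowerboundcomp}, and the upper bound counts decompositions into components of size at most $k$ plus a bounded exceptional set. The only differences are cosmetic --- the paper overcounts with ordered sequences of parts and then divides by the symmetry factor $a_1!\cdots a_d!$ via Fact \ref{factorial}, whereas you count unordered partitions directly and minimize $\sum_j p_j/j$ by a Stirling computation; and you supply a compactness justification for the uniform bound on the number of elements in components of size $>k$, which the paper asserts without proof.
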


\begin{proof}
Suppose first that $k=1$.  Then there is a fixed integer $w$ such that for any $\calM\models T_{\calH}$, all but $w$ elements of $M$ are in a component of size $1$.  Fix $\calM\models T_{\calH}$. Let $D=C^{\calM}$, and let $X\subseteq M$ be the set of elements contained in a component of size greater than $1$.  Observe that for all $a\neq b\in M\setminus (X\cup D)$, $a\sim b$ if and only if for every relation $R(x_1,\ldots, x_s)$ of $\calL$, $\calM\models R(a,\ldots, a)\leftrightarrow R(b,\ldots, b)$.  Consequently, the number of distinct $\sim$-classes of $\calM$ is at most $|X|+|D|+2^{|\calL|}\leq w+|C|+2^{|\calL|}$.  Since this bound does not depend on $\calM$, this shows $\calH$ is basic.

Suppose now $k\geq 2$.  That $|\calH_n|\geq n^{(1-1/k-o(1))n}$ is immediate from Lemma \ref{lem:lowerboundcomp}.  We now show that $|\calH_n|\leq n^{(1-1/k+o(1))n}$.  By choice of $k$, there is an integer $w$ such that for all $\calM\models T_{\calH}$, all but $w$ elements of $M$ are contained in a component of size at most $k$ in $\calM$.  Let $c=|C|$, and let $d=\sum_{i=1}^k |\calH_i|$. By convention, let $\calH_0$ consist of the empty structure, so $|\calH_0|=1$.  Suppose now $n$ is large.  Then we can construct every $\calG\in \calH_n$ as follows.
\begin{enumerate}[1.]
\item Choose  $D=C^{\calG}$, the interpretations of the constants in $\calG$. There are at most $n^c$ ways to do this.

\item Choose a set $A\subseteq [n]$ of size at most $w$ and choose $\calG[A]$.  The number of ways to do this is at most $\sum_{i=0}^w{n\choose i}|\calH_i|\leq (w+1)n^w2^{|\calL|w^r}$.

\item Choose a sequence of natural numbers $(b_1,\ldots, b_n)$ (some of which may be $0$) such that each $b_i\leq k$ and $\sum_{i=1}^nb_i=|[n]\setminus A|$.  Then partition $[n]\setminus A$ into parts $B_1,\ldots, B_n$ of sizes $b_1,\ldots, b_n$, respectively (note some of the $B_i$ may be empty). The number of ways to do this step is at most $\sum_{\{(b_1,\ldots, b_n)\in [n]^n: b_i\leq k, \sum b_i=n-|A|\}}{n\choose b_1,\ldots, b_n}\leq k^nn^n$.  

\item  Choose a sequence $(\calG_1,\ldots, \calG_n)$ such that for each $1\leq i\leq n$, $\calG_i\in \calH_{b_i}$.  Then make $\calG[B_i]$ isomorphic to $\calG_i$ via the order-preserving bijection from $[b_i]$ to $B_i$. There are at most $(d+1)^n$ ways to do this.  

\item For all relations $R(\xbar)$ in $\calL$ and $\abar\in [n]^{|\xbar|}\setminus ((A^{|\xbar|}\cup \bigcup_{i=1}^n B_i^{|\xbar|})$, let $\calG\models \neg R(\abar)$.  There is only one way to do this given our previous choices.
\end{enumerate}
This yields the following upper bound (recall $c,w, |\calL|, d$ are all constants).
\begin{align}\label{align9}
|\calH_n|&\leq n^c(w+1)n^w2^{|\calL|w^r} k^n n^n (d+1)^n=n^{n(1+o(1))}.
\end{align}
We now consider how many times each $\calG\in \calH_n$ was counted. Fix $\calG\in \calH_n$, and assume $\calG$ is constructed from the sets $D=C^{\calG}$ (step 1), $A$ (step 2), and the sequences $(b_1,\ldots, b_n)$, $(B_1,\ldots, B_n)$, and $(\calG_1,\ldots, \calG_n)$ (steps 3, 4 and 5 respectively).  Let $\calN_1,\ldots, \calN_d$ enumerate all the distinct elements of $\cup_{i=1}^k\calH_i$, and for each $i\in [d]$, let $J_i=\{j\in [n]: \calG_j=\calN_i\}$ and set $a_i=|J_i|$.  Then for any permutation $\sigma:[n]\rightarrow [n]$ satisfying $\sigma(J_i)=J_i$ for each $i\in [d]$, $\calG$ is also generated by making the same choices in steps 1 and 2, while in steps 3-5 choosing the sequences $(b_1,\ldots, b_n)$, $(B_{\sigma(1)},\ldots, B_{\sigma(n)})$, and $(\calG_1,\ldots, \calG_n)$.  So $\calG$ is counted at least $a_1!\cdots a_d!$ times.  Observe $n-w\leq n-|A|=\sum_{i=1}^d|N_i|a_i\leq k\sum_{i=1}^da_i$.  Combining this inequality with Fact \ref{factorial}, we obtain
\begin{align*}
a_1!\cdots a_d!&\geq \Big(\Big(\Big(\sum_{i=1}^d a_i\Big)/d\Big)!\Big)^d\geq \Big(\Big(\frac{n-w}{kd}\Big)!\Big)^d\geq\Big(\frac{n-w}{kd}\Big)^{\frac{n-w}{k}}\geq n^{n/k-o(n)},
\end{align*}
where the last inequality is because $n$ is large and $w, c, k, d$ are constants.  Therefore each $\calG$ is counted at least $n^{n/k-o(1)}$ many times.  Combining this with (\ref{align9}) yields that $|\calH_n|\leq n^{n(1+o(1))}n^{-n/k+o(n)}=n^{n(1-\frac{1}{k}+o(1))}$.
\end{proof}

Note that by Lemma \ref{lem:exact}, we now understand the speed of hereditary properties with finite components. The next two lemmas will help us understand the case of a totally bounded property with infinite components.  Specifically, they show that given a totally bounded $\calM$, if $\calM$ has an infinite component, then by deleting elements, we can find a substructure of $\calM$ with infinitely many components of size $k$ for arbitrarily large finite $k$. In the proof of Theorem \ref{thm:tb} we will combine this with Lemma \ref{lem:lowerboundcomp} to show that if a totally bounded hereditary $\calL$-property $\calH$ has infinite components, then $|\calH_n|\geq n^{n(1-o(1))}$.  

We will use the following notion of distance in an $\calL$-structure $\calM$.  Given $a, b\in M$, define the \emph{distance from $a$ to $b$ in $\calM$}, $d_{\calM}(a,b)$, to be $0$ if $a=b$, to be $\infty$ if no path exists between $a$ and $b$ in $\calM$, and otherwise to be the minimum length of a path from $a$ to $b$ in $\calM$.  Given $a\in A$, and $i\in \mathbb{N}$, define
$$
B^\calM_i(a):=\{e\in M: d_\calM(a,e)\leq i \}.
$$
Then for $X\subseteq A$ and $i\in \mathbb{N}$, set $B^\calM_i(X)=\bigcup_{a\in X}B^\calM_i(a)$.  Observe that if $\calM'\subseteq \calM$ and $a,b\in M'$, then $d_\calM(a,b)\leq d_{\calM'}(a,b)$.

\begin{Lemma}\label{lem:findcomps1}
Suppose $t\geq 1$ is an integer, and $\calL$ has maximum arity $r \geq 2$.  Assume $\calM$ is an $\calL$-structure which contains an infinite connected set $A$.  Then for any $a\in A$, there is $t\leq t'\leq tr$ and a connected set $A'\subseteq B^\calM_t(a)$ with $|A'|=t'$.
\end{Lemma}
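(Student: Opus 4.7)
The plan is an incremental, BFS-style construction inside $A$. Fix any $a\in A$ and set $A_0:=\{a\}$; since $A_0$ has no pairs of distinct elements, it is (vacuously) connected. If $t=1$ we are done, since $t\leq|A_0|=1\leq r=tr$. So assume $t\geq 2$.

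The engine of the proof is the following \emph{growth step}: if $B\subseteq A$ is a finite connected set with $B\subsetneq A$, then there is a connected $B'\subseteq A$ with $B\subsetneq B'$ and $|B'|\leq |B|+(r-1)$. To build $B'$, pick any $a'\in B$ and any $b\in A\setminus B$; by connectedness of $A$, there is a path $\bar{a}_1,\ldots,\bar{a}_k$ contained in $A$ from $a'$ to $b$. Let $j$ be the least index with $\bar{a}_j\not\subseteq B$; this exists because $b\in\bar{a}_k\setminus B$. Then $\bar{a}_j\cap B\neq\emptyset$: if $j=1$, then $a'\in\bar{a}_1\cap B$; if $j>1$, then $\bar{a}_{j-1}\subseteq B$ and $\bar{a}_{j-1}\cap\bar{a}_j\neq\emptyset$. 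Set $B':=B\cup(\cup\bar{a}_j)$. Since $|\cup\bar{a}_j|\leq r$ and $\cup\bar{a}_j$ meets $B$, we get $|B'|\leq|B|+(r-1)$; and $B\subsetneq B'$ because $\bar{a}_j\not\subseteq B$.

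To verify $B'$ is connected, take any pair $x\neq y$ in $B'$ and construct a path inside $B'$: if $x,y\in B$, use connectedness of $B$; if $x\in B$ and $y\in(\cup\bar{a}_j)\setminus B$, fix $z\in\bar{a}_j\cap B$ and concatenate a path inside $B$ from $x$ to $z$ (trivial if $x=z$) with the single-tuple path $\bar{a}_j$, which contains both $z$ and $y$; if $x,y\in\cup\bar{a}_j$, the single-tuple path $\bar{a}_j$ already suffices. In all cases the tuples used lie in $B'$.

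Finally, iterate the growth step starting from $A_0$. Because $A$ is infinite, each finite $A_i$ in the chain satisfies $A_i\subsetneq A$, so the step always applies. Stop at the first index $i$ with $|A_i|\geq t$. Then $|A_{i-1}|\leq t-1$, so $|A_i|\leq (t-1)+(r-1)=t+r-2\leq tr$, where the last inequality follows from $(t-1)(r-1)\geq 0$. Setting $A':=A_i$ yields a connected subset of $A$ of size $t'\in[t,tr]$, as required. The only real obstacle is the preservation of connectedness under the growth step, which needs the small case analysis above; the rest is straightforward bookkeeping on cardinalities.
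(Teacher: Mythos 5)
Your proof is correct and follows essentially the same approach as the paper: grow a connected subset of $A$ one relation-tuple at a time, using the connectedness and infinitude of $A$ to guarantee that each step can add a tuple meeting the current set while contributing at least one (and at most $r-1$) new element, then stop once the size reaches $t$. Your version is somewhat more careful than the paper's (you justify the existence of the next tuple via a path leaving $B$ and explicitly verify connectedness is preserved), and your bound $t\leq |A'|\leq t+r-2$ is in fact sharper than the stated $tr$.
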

\begin{proof}
Fix $a\in A$.  Since $A$ is connected and infinite, there is a relation $\psi_1(\xbar_1)$, and $\abar_1\in A^{|\xbar_1|}$ such that $\psi_1(\abar_1)$ and $a\in \abar_1$. Suppose $1\leq i<t$ and we have chosen $\abar_1,\ldots, \abar_i$ such that $(\cup \abar_1)\cup \ldots \cup (\cup \abar_i)$ is a connected subset of $A$, and $i\leq |(\cup \abar_1)\cup \ldots \cup (\cup \abar_i)|\leq ir$.  Since $A$ is infinite and connected, there is some $\abar_{i+1}\in A^{|\xbar_{i+1}|}$ and a relation $\psi_{i+1}(\xbar_{i+1})$, such that $M\models \psi_{i+1}(\abar_{i+1})$,  $(\cup \abar_{i+1})\cap ((\cup \abar_1)\cup \ldots \cup (\cup \abar_i))\neq \emptyset$, and $(\cup \abar_{i+1})\setminus ((\cup \abar_1)\cup \ldots \cup (\cup \abar_i))\neq \emptyset$. Note $i+1\leq |(\cup \abar_1)\cup \ldots \cup (\cup \abar_{i+1})|\leq (i+1)r$. After $t$ steps, $A':=(\cup \abar_1)\cup \ldots \cup (\cup \abar_t)$ will be connected with $t\leq |A'|\leq tr$. By construction, $A'\subseteq B^\calM_t(a)$.
\end{proof}

\begin{Lemma}\label{lem:findcomps}
Suppose $k,t\geq 1$ are integers, and $\calL$ has maximum arity $r \geq 2$. Assume $\calM$ is a totally $k$-bounded $\calL$-structure, and $\calM$ contains an infinite component.  Then there is $t\leq t'\leq tr$ and a substructure $\calM'\subseteq_{\calL} \calM$ so that $\calM'$ contains infinitely many distinct components of size $t'$.
\end{Lemma}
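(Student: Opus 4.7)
The plan is to extract from the infinite component an infinite family of pairwise well-separated connected sets of the same bounded size, and then pass to the substructure consisting of their union (together with $C^{\calM}$). Total $k$-boundedness is what makes this possible. For each element $a \in M$ and each $j \geq 1$, let $B_j(a)$ denote the set of elements reachable from $a$ along a path of length at most $j$. Fixing $a$ in any position of any relation $R \in \calL$ admits fewer than $k$ completions of the remaining arguments, so $a$ has at most $d := |\calL| \cdot r \cdot k \cdot (r-1)$ one-step neighbors, and inductively $|B_j(a)| \leq 1 + d + d^2 + \cdots + d^j$. Thus every such ball is finite.

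Let $A$ denote the infinite component of $\calM$ and set $F := \bigcup_{c \in C^{\calM}} B_{2t}(c)$, a finite set. Recursively choose $a_1, a_2, \ldots \in A$ with $a_i \notin F \cup \bigcup_{j<i} B_{4t}(a_j)$; this is always possible since $A$ is infinite while each forbidden set is finite. The triangle inequality for path distance then forces the balls $B_{2t}(a_i)$ to be pairwise disjoint and each disjoint from $C^{\calM}$. For each $a_i$, apply Lemma \ref{lem:findcomps1} starting from $a_i$ inside $A$ to produce a connected set $A(a_i) \subseteq A$ containing $a_i$ with $t \leq |A(a_i)| \leq tr$; the construction builds $A(a_i)$ step by step using overlapping tuples, so after at most $t$ steps $A(a_i) \subseteq B_t(a_i) \subseteq B_{2t}(a_i)$. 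Since the sizes $|A(a_i)|$ all lie in the finite set $\{t, t+1, \ldots, tr\}$, some value $t'$ is attained infinitely often, and after passing to an infinite subsequence we may assume $|A(a_i)| = t'$ for every $i$.

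Set $A' := C^{\calM} \cup \bigsqcup_i A(a_i)$ and $\calM' := \calM[A']$, which is a legitimate substructure since $C^{\calM} \subseteq A'$. Each $A(a_i)$ is connected in $\calM'$ by inheritance. To see it is a \emph{component} of $\calM'$, observe that any tuple $\bbar \subseteq A'$ satisfying a relation of $\calL$ and meeting $A(a_i)$ must lie in $B_{t+1}(a_i) \subseteq B_{2t}(a_i)$; by construction, $B_{2t}(a_i)$ is disjoint from $C^{\calM}$ and from every $B_{2t}(a_j)$ with $j \neq i$, hence from every other $A(a_j)$. Therefore no tuple in $\calM'$ links $A(a_i)$ to any element outside it, and $A(a_i)$ is a maximal connected subset, i.e.\ a component, of size $t'$. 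This produces infinitely many components of the same size $t'$ in $\calM'$, as required.

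The main technical point is the separation argument in the last paragraph, which is handled by the fact that total $k$-boundedness makes all balls finite and lets us choose the centers $a_i$ far apart in path distance; every other step is essentially bookkeeping, invoking Lemma \ref{lem:findcomps1} on each selected center and applying pigeonhole to the bounded range of possible sizes.
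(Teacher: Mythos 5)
Your proof is correct and follows essentially the same strategy as the paper's: both exploit total $k$-boundedness to make local neighbourhoods finite, invoke (the construction in) Lemma \ref{lem:findcomps1} to extract connected sets of size between $t$ and $tr$, isolate these sets so that each is a component of the induced substructure, and pigeonhole on the finitely many possible sizes. The only difference is the bookkeeping for the isolation step --- you pre-select centres at pairwise path-distance greater than $4t$ and confine each piece to its own finite ball, whereas the paper iteratively deletes the iterated neighbourhood $N(N(B_i))$ of each extracted piece and re-extracts an infinite component from the remainder; both mechanisms are sound.
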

\begin{proof}
Let $A$ be an infinite component of $\calM$.  Observe that since $\calM$ is totally $k$-bounded, we have that for any finite $X\subseteq M$ and any $s\in \mathbb{N}$, $B^\calM_s(X)$ is finite.  Let $D=C^{\calM}$. Then $B^\calM_1(D)$ is finite, so $\calM[A\setminus B^\calM_1(D)]$ has finitely many components.  One of them must be infinite, call this $A_0$.  

Since $A_0$ is an infinite component, and since $B^\calM_s(X)$ is finite for all $s\in \mathbb{N}$ and finite $X\subseteq M$, there exists a set $\{a_i: i\in \mathbb{N}\}\subseteq A_0$ such that for each $i\neq j$, $d_\calM(a_i,a_j)>2t+1$.  By Lemma \ref{lem:findcomps1}, we may choose for each $i\in \mathbb{N}$ a connected set $C_i\subseteq B^\calM_t(a_i)$ with $t\leq |C_i|\leq tr$.  Note that by construction, for all $i\in \mathbb{N}$, $B^\calM_1(C_i)\cap (D\cup \bigcup_{j\in \mathbb{N}\setminus \{i\}}C_j)=\emptyset$.  

Let $\calM':=\calM[D\cup \bigcup_{i\in \mathbb{N}}C_i]$.  Fix $i\in \mathbb{N}$. We show $C_i$ is a component of $\calM'$.   Clearly $C_i$ is connected in $\calM'$.  Suppose towards a contradiction there is $c\in C_i$ connected to some $a\in M'\setminus C_i$ by a finite path $\bbar_1,\ldots, \bbar_s$ in $\calM'$.  Then for some $1\leq u\leq s$, we have $\bbar_u\cap C_i\neq \emptyset$ and $\bbar_u\cap (M'\setminus C_i)\neq \emptyset$.  But now $B_1^{\calM'}(C_i)\cap (D\cup \bigcup_{j\in \mathbb{N}\setminus \{j\}}C_j)\neq \emptyset $, which is a contradiction since $B_1^{\calM'}(C_i)\subseteq B_1^{\calM}(C_i)$.  Thus each $C_i$ is a component of $\calM'$.  By the pigeon hole principle, there is some $t\leq t'\leq tr$ such that infinitely many $C_i$ have size $t'$.  This finishes the proof. 
\end{proof}

We can now prove our counting theorem for totally bounded properties.   

\begin{Theorem}\label{thm:tb}
Suppose $\calH$ is totally bounded. Then either $\calH$ is basic, $|\calH_n|\geq n^{n(1-o(1))n}$, or for some integer $k\geq 2$, $|\calH_n|=n^{n(1-1/k-o(1))}$.
\end{Theorem}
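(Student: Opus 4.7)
The plan is to split into two cases based on whether $\calH$ has finite components or has some infinite component, and to reduce each to results already proved in this section.

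\textbf{Case 1: $\calH$ has finite components.} Here there is nothing to do beyond quoting Lemma \ref{lem:exact}. Either the largest $k$ with infinitely many components of size $k$ is $1$, in which case $\calH$ is basic, or that $k$ is at least $2$, in which case $|\calH_n| = n^{n(1-1/k-o(1))}$. (If no such $k$ exists at all, meaning all component sizes appear only finitely often in $\calH$, then the bound $w$ in the proof of Lemma \ref{lem:exact} can be taken to cover everything and the same argument shows $\calH$ is basic; equivalently, one observes that $\calH$ is then trivial, contradicting non-triviality unless it is vacuously basic.)

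\textbf{Case 2: Some $\calM \in \calH$ contains an infinite component.} Fix an integer $k$ witnessing that $\calH$ is totally $k$-bounded. I would first dispose of the low arity possibilities: if $r=0$ there are no relations, so every singleton is a component, contradicting Case 2; if $r=1$ every relation symbol is unary and so each tuple $\abar_i$ in a path has length one, forcing $\abar_i=\abar_{i+1}$ in the intersection condition, so again all components are singletons. Thus in Case 2 we may assume $r\geq 2$, and Lemma \ref{lem:findcomps} applies.

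The core of the argument is then: given any integer $t \geq 1$, apply Lemma \ref{lem:findcomps} to $\calM$ with parameters $k$ and $t$ to obtain $t' \in [t, tr]$ and a substructure $\calM' \subseteq_\calL \calM$ having infinitely many components of size $t'$. Since $\calH$ is hereditary, $\calM' \in \calH$, so $\calH$ has infinitely many components of size $t'$. Lemma \ref{lem:lowerboundcomp} then yields
\[
|\calH_n| \;\geq\; n^{(1 - 1/t' - o(1))n} \;\geq\; n^{(1 - 1/t - o(1))n}.
\]
Since $t$ was arbitrary, letting $t \to \infty$ along a suitable diagonal gives $|\calH_n| \geq n^{n(1 - o(1))}$, which is the second alternative in the trichotomy.

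I do not expect a serious obstacle here, as the infrastructure set up earlier in the section does nearly all of the work: Lemma \ref{lem:exact} delivers the finite-components case and Lemmas \ref{lem:findcomps} and \ref{lem:lowerboundcomp} combine cleanly for the infinite-component case. The only mildly delicate point is the ``diagonal'' step in the last display, where one wants a single $o(1)$ bound independent of $t$; this is straightforward because for each fixed $t$ the inequality $|\calH_n| \geq n^{(1-1/t-o(1))n}$ holds for all sufficiently large $n$, so by choosing $t = t(n) \to \infty$ slowly enough one gets $|\calH_n| \geq n^{n(1 - o(1))}$ as claimed.
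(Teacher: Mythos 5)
Your proposal is correct and follows essentially the same route as the paper: the paper's proof also splits on whether $\calH$ has finite or infinite components, handling the former via Lemma \ref{lem:exact} and the latter by combining Lemma \ref{lem:findcomps} with Lemma \ref{lem:lowerboundcomp} and letting $t\to\infty$. The only cosmetic difference is that the paper dismisses the arity $r\leq 1$ case up front (noting $\calH$ is then basic) rather than inside the infinite-component case.
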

\begin{proof}
Clearly if $\calL$ has maximum arity $r\leq 1$, then $\calH$ is basic.  So assume $\calL$ has maximum arity $r\geq 2$.  Suppose $\calH$ has infinite components. Then there is $\calM\models T_{\calH}$ with an infinite component.  Lemma \ref{lem:findcomps} implies that for all $t$, there is $t\leq t'\leq tr$ such that $\calH$ has infinitely many components of size $t'$. By Lemma \ref{lem:lowerboundcomp}, $|\calH_n|\geq n^{n(1-1/tr-o(1))}$ for all $t\geq 1$, consequently $|\calH_n|\geq n^{n(1-o(1))}$. 

Assume now that $\calH$ has finite components. Then there is an integer $m$ such that for every $\calM\models T_{\calH}$, every component of $\calM$ has size at most $m$ and there is a maximal $m'\leq m$ such that $\calH$ contains infinitely many components of size $m'$.  By Lemma \ref{lem:exact}, if $m'=1$ then $\calH$ is basic. Otherwise $m'\geq 2$, and Lemma \ref{lem:exact} implies $|\calH_n|=n^{n(1-1/m'-o(1))}$. 
\end{proof}

Note Theorem \ref{thm:tb} shows that if $\calH$ is totally bounded, then $|\calH_n|\geq n^{n(1-o(1))}$ if and only if $\calH$ has infinite components.  Given $\ell \geq 2$, a hereditary $\calL$-property $\calH$ is \emph{factorial of degree $\ell$} if $|\calH_n|=n^{n(1-1/\ell -o(1))}$.  The proof of Theorem \ref{thm:tb} shows that if $\calH$ is totally bounded and factorial of degree $\ell$, then $\ell$ is the largest integer so that $\calH$ has infinitely many components of size $\ell$.  In fact we can show something stronger.

\begin{Corollary}\label{lem:tbb}
Suppose $\calH$ is a totally bounded hereditary $\calL$-property with finite components and $\ell\geq2$ is an integer. The there are finitely many countably infinite $\calL$-structures $\calM_1,\ldots, \calM_m$, each of which is totally bounded with finite components, and a trivial property $\calF$ such that $\calH=\calF\cup \bigcup_{i=1}^m age(\calM_i)$.  

Further, $\calH$ is factorial of degree $\ell$ if and only if $\ell$ is the largest integer such that for some $i\in [m]$, $\calM_i$ has infinitely many components of size $\ell$.
\end{Corollary}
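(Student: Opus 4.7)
The plan is to adapt the compactness-based template argument of Theorem \ref{thm:efthm1} to the bounded-components setting, with the uniform size bound on components serving as the key new ingredient. First I would fix an integer $K$ bounding the size of every component of every $\calM \in \calH$, which exists by the finite-components hypothesis. Because any path inside a component has length at most $K$, the relation ``$x$ and $y$ lie in a common component'' is first-order definable by a bounded existential formula, and hence so is the pointed isomorphism type (with constants tracked) of the component containing any given element. For each countably infinite $\calM \in \calH$, I would then write down a sentence $\xi_\calM$ asserting: (i) every component of the structure has an isomorphism type occurring in $\calM$ (there are only finitely many candidate types, since $K$ and $\calL$ are both finite); (ii) for every such type $\tau$ occurring only finitely often in $\calM$, the number of $\tau$-components is at most the multiplicity of $\tau$ in $\calM$; and (iii) each constant $c$ lies in a component whose pointed type at $c$ matches that of the component of $c^\calM$. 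By construction $\calM \models \xi_\calM$, and conversely any $\calN \models \xi_\calM$ admits a componentwise injective embedding into $\calM$, so $\calN \in age(\calM)$.

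Next I would apply compactness exactly as in the proof of Theorem \ref{thm:efthm1}: the theory
\[
T_\calH \cup \{\neg \xi_\calM : \calM \text{ countably infinite in } \calH\} \cup \Big\{\exists x_1 \cdots \exists x_n \bigwedge_{i \neq j} x_i \neq x_j : n \geq 1\Big\}
\]
is inconsistent, because any model would yield, via L\"owenheim-Skolem, a countably infinite $\calN \in \calH$ satisfying $\neg \xi_\calN$, contradicting $\calN \models \xi_\calN$. Compactness supplies finitely many countably infinite $\calM_1, \ldots, \calM_m \in \calH$ and a threshold $n_0$ such that every $\calN \in \calH$ with $|N| \geq n_0$ satisfies some $\xi_{\calM_i}$ and hence lies in $age(\calM_i)$. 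Taking $\calF$ to be the set of $\calN \in \calH$ with $|N| < n_0$ yields a trivial hereditary property with $\calH = \calF \cup \bigcup_{i=1}^m age(\calM_i)$, and each $\calM_i$ inherits total boundedness and finite components from $\calH$.

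For the factorial-degree characterization, I would apply Lemma \ref{lem:exact} to each $age(\calM_i)$, which is itself a totally bounded hereditary $\calL$-property with finite components, obtaining that $age(\calM_i)$ is either basic or factorial of some degree $\ell_i \geq 2$, where $\ell_i$ is the largest integer such that $age(\calM_i)$ contains infinitely many components of size $\ell_i$. The essential observation is that this $\ell_i$ coincides with the largest integer such that $\calM_i$ itself has infinitely many components of size $\ell_i$: if $\calN \in age(\calM_i)$ has infinitely many size-$k$ components, they embed as disjoint connected subsets into an elementary extension $\calM_i \preceq \calM_i'$, each contained in some component of size at most $K$, so a pigeonhole argument on isomorphism types of bounded-size components, combined with the fact that finite multiplicities of size-$s$ components are preserved under elementary extensions, forces $\calM_i$ to contain infinitely many components of some size $k' \geq k$. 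Since $\calF_n = \emptyset$ for $n \geq n_0$ and $\max_i |age(\calM_i)_n| \leq |\calH_n| \leq m \cdot \max_i |age(\calM_i)_n|$, I conclude $|\calH_n| = n^{n(1-1/\ell^* - o(1))}$ where $\ell^* = \max_i \ell_i$, so $\calH$ is factorial of degree $\ell$ precisely when $\ell = \ell^*$, which is the largest integer such that some $\calM_i$ has infinitely many components of size $\ell$.

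The main obstacle will be setting up $\xi_\calM$ carefully enough that $\calN \models \xi_\calM$ genuinely entails $\calN \in age(\calM)$: two components of the same bare isomorphism type in $\calM$ may be distinguished by which constants they contain, so the notion of component type feeding into the multiplicity conditions should really be a pointed one that records the constant labels in each component, and the embedding built componentwise must respect these labels. The elementary-extension/pigeonhole step in the degree characterization will also require careful bookkeeping to pin down the equivalence of infinite component multiplicity between $age(\calM_i)$ and $\calM_i$, since a size-$k$ component of a substructure may sit strictly inside a larger component of the ambient elementary extension.
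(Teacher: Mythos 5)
Your proposal is correct and follows essentially the same route as the paper: axiomatize the universal theory of each countably infinite template by a single sentence describing its (uniformly bounded) component types and their multiplicities, use compactness to extract finitely many templates covering all large members of $\calH$, and read off the factorial degree from Lemma \ref{lem:exact}/Theorem \ref{thm:tb}. The only divergence is that the paper first asserts there are only finitely many non-isomorphic countably infinite structures in $\calH$ and applies compactness to that finite list, whereas you run compactness over the full family $\{\neg\xi_{\calM}\}$ exactly as in Theorem \ref{thm:efthm1}; your version is, if anything, slightly more robust, and your care about pointed component types and about transferring infinite component multiplicities between $age(\calM_i)$ and $\calM_i$ fills in details the paper leaves implicit.
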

\begin{proof}
Since $\calH$ has finite components there are integers $t\geq 1$ and $w\geq 0$ such that there exists $\calM\models T_{\calH}$ with infinitely many components of size $t$, but for all $\calM'\models T_{\calH}$, there are at most $w$ elements of $\calM'$ in a component of size strictly larger than $t$.  Since $\calL$ is finite, this implies there are only finitely many non-isomorphic $\calL$-structures with domain $\mathbb{N}$ and satisfying $T_\calH$, say $\calM_1,\ldots, \calM_m$.  Since each $\calM_i$ is totally bounded and has all but $w$ elements in a component of size at most $t$, it is straightforward to see that $Th_{\forall}(\calM_i)$ is finitely axiomatizable, in fact by a single sentence, say $\psi_i$.  Then the following set of sentences is inconsistent.
$$
T_{\calH}\cup \{\neg \psi_i : i\in [m]\} \cup \{\exists x_1\ldots \exists x_n \bigwedge_{1\leq i\neq j\leq n}x_i\neq x_j: n\geq 1\}.
$$
By compactness, there is $K$ such that for all $\calM\models T_{\calH}$ of size at least $K$, $\calM\models \psi_i$ for some $i\in [m]$ (thus if $\calM$ is also finite, then $\calM\in age(\calM_i)$).  Let $\calF$ be the property consisting of the elements in $\calH$ of size at most $K$.  Then $\calH=\calF\cup \bigcup_{i=1}^m age(\calM_i)$.  For all $\ell\geq 2$, the proof of Theorem \ref{thm:tb} shows that $\calH$ is factorial of degree $\ell$ if and only if $\ell$ is the largest integer such that one of the $\calM_i$ has infinitely many components of size $\ell$.
\end{proof}

 \section{A dividing line: mutual algebraicity}\label{sec:ma}

This section contains the remaining ingredients needed for Theorem \ref{thm:mainthm1}.  We proceed by partitioning hereditary properties
 based on the dividing line of mutual algebraicity (see Subsection \ref{ss:prelim} for precise definitions).  The idea is that mutually algebraic properties are ``well behaved,'' allowing a detailed analysis of their structure and speeds, whereas
non-mutually algebraic properties have ``bad behavior'' implying a relatively fast speed. 
Specifically, in Subsection \ref{ss:macase}, we consider the case where $\calH$ is mutually algebraic.  We use structural implications of this assumption to prove the remaining counting dichotomies.  Namely, either $|\calH_n|\geq n^{n(1-o(1))}$, $|\calH_n|=n^{n(1-1/k-o(1))}$ for some integer $k\geq 2$, or $\calH$ is basic, in which case, by Section \ref{sec:eu}, the speed of $\calH$ is asymptotically equal to a sum of the form $\sum_{i=1}^kp_i(n)i^n$ for finitely many rational polynomials $p_1,\ldots, p_k$.   The proofs in Section \ref{ss:macase} rely on Section \ref{sec:tb} along with the fact that a mutually algebraic property is always controlled by finitely many totally bounded properties (see Subsection \ref{ss:macase} for details).

By contrast, in Subsection \ref{ss:lbma}, we show that for any finite relational language $\calL$, if $\calH$ is a non-mutually algebraic hereditary $\calL$-property $\calH$, then $|\calH_n|\geq n^{n(1-o(1))}$.  To prove this, we require a model theoretic result, Theorem \ref{mainthmold}, which relies on results from \cite{ourpaper}.  This theorem describes some properties of large, in fact uncountable, models of $T_{\calH}$.
This allows us to show there is an uncountable model of $T_{\calH}$ which has many distinct finite substructures, yielding the desired lower bound on $|\calH_n|$.  

Our strategy can be seen as a generalization of the strategy employed by Balogh, Bollob\'{a}s, and Weinreich in the graph case \cite{BBW1}.  However, executing this strategy is  significantly more complicated when dealing with relations of arity larger than $2$.  The crucial new ingredient in our proof, Theorem \ref{mainthmold}, required ideas from stability theory.

 \subsection{Preliminaries}\label{ss:prelim}
 In this subsection we give the relevant background on mutually algebraic properties.   We begin with the basic definitions, first introduced in \cite{laskowski2013}.

 \begin{Definition}\label{def:ma}{\em
Given an $\calL$-structure $\calM$, an $\calL$ formula $\phi(\xbar)=\phi(x_1,\ldots, x_s,\ybar)$ and $\abar\in M^{\lg(\ybar)}$,
 $\phi(\xbar,\abar)$ is \emph{$k$-mutually algebraic in $\calM$} if for every partition $[s]=I\cup J$ into nonempty sets $I$ and $J$, 
$$
\calM\models \forall \xbar_I\exists^{<k}\xbar_J\phi(x_1,\ldots, x_s,\abar).
$$ }
\end{Definition}

Note that an $\calL$-structure $\calM$ is totally $k$-bounded if and only if all relations of $\calL$ are $k$-mutually algebraic in $\calM$.

\begin{Definition}\label{def:mam}
{\em An $\calL$-structure $\calM$ is \emph{mutually algebraic} if, for every formula $\psi(\xbar)$ 
there is a finite set $\Delta=\Delta(\xbar; \ybar)$ of $\calL$-formulas, an integer $k$, and parameters $\abar\in M^{|\ybar|}$ such that the following hold.
\begin{enumerate}
\item For every $\varphi(\xbar',\ybar)\in \Delta$, $\varphi(\xbar', \abar)$ is $k$-mutually algebraic in $\calM$ (here, $\xbar'\subseteq \xbar$ is a subsequence of $\xbar$, possibly varying with
$\phi\in\Delta$); and
\item There is a formula, $\theta(\xbar; \ybar)$, which is a boolean combination of elements of $\Delta$, such that 
$\calM\models \forall \xbar (\psi(\xbar)\leftrightarrow \theta(\xbar; \abar))$.
\end{enumerate}}
\end{Definition}

In the definition above, there is no bound on the quantifier complexity of either $\phi$ or of the formulas in $\Delta$.
However, detecting whether or not a structure is mutually algebraic can be seen by looking at quantifier-free formulas.
In fact, as we are working in a finite language $\calL$
 without function symbols,  we have the following characterization.


\begin{Lemma}  \label{4.3}
{\em An $\calL$-structure $\calM$ is mutually algebraic  if and only if for some integers $s,k\geq 0$,
there is a finite set $\Delta=\Delta(\xbar, \ybar)$ of quantifier-free $\calL$-formulas with  $|\ybar|=s$ and parameters $\abar\in M^s$ such that the following hold.
\begin{enumerate}
\item For every $\varphi(\xbar',\ybar)\in \Delta$, $\varphi(\xbar', \abar)$ is $k$-mutually algebraic in $\calM$ (here too, $\xbar'\subseteq \xbar$ is a subsequence of $\xbar$, possibly varying with
$\phi\in\Delta$); and
\item For every relation symbol $R(\xbar')$ of $\calL$, there is a formula $\delta_R(\xbar',\ybar)$, which is a boolean combination of elements of $\Delta$, such that 
$$
\calM\models \forall \xbar' (R(\xbar')\leftrightarrow \delta_R(\xbar', \abar)).
$$
\end{enumerate}}
\end{Lemma}

\begin{proof}  First, assume $\calM$ is mutually algebraic.  By Proposition~4.1 of \cite{Laskowski2009}, every relation is equivalent in $\calM$ to a boolean combination of {\bf quantifier-free}
mutually algebraic formulas.  As there are only finitely many relations in $\calL$, we can choose a finite set $\Delta$ of quantifier-free formulas, and a uniform finite $k$, so that each relation of $\calL$ is equivalent in $\calM$ to a boolean combination of elements of $\Delta$, and such that every element of $\Delta$ is $k$-mutually algebraic in $\calM$.

Conversely, suppose there exist $s,k\in \mathbb{N}$, $\abar\in M^s$, and $\Delta(\xbar;\ybar)$, a set of quantifier-free $\calL$-formulas with $|\ybar|=s$, such that (1) and (2) hold.  Let $MA^*(\calM)$ denote the set of all formulas $\theta(\xbar;\bbar)$ with the property that $\theta(\xbar;\bbar)$ is equivalent in $\calM$ to a boolean combination of mutually algebraic formulas.  By assumption, every relation of $\calL$ is in $MA^*(\calM)$.  Clearly $MA^*(\calM)$ is closed under substituting constants for variables, and under taking boolean combinations.  It is closed under existential quantification
by Proposition~2.7 of \cite{laskowski2013}.  It follows that $\calM$ is mutually algebraic.
\end{proof}



\begin{Definition}\em{
We say a (possibly incomplete) theory $T$  is \emph{mutually algebraic} if every $\calM\models T$ is mutually algebraic. A hereditary $\calL$-property $\calH$ is \emph{mutually algebraic} if $T_{\calH}$ is mutually algebraic.}
\end{Definition}

Observe that every totally bounded $\calL$-structure is automatically mutually algebraic (just take $\Delta$ in Lemma~\ref{4.3} to be the set of all atomic formulas).  We will see that relative to an appropriate change of language, the converse holds as well.  In what follows, a {\em totally bounded frame}, defined precisely below, is an $\calL$-formula encoding conditions (1) and (2) of Lemma~\ref{4.3}.  Given a tuple of variables $\xbar$ and $x\in \cup\xbar$, let $\hat{x}$ denote the tuple obtained from $\xbar$ by deleting $x$.

\begin{Definition} \label{def:frame}  {\em  A {\em totally bounded frame} is a universal 
$\calL$-formula $\theta(\ybar)$ such that the following holds.  There exists $k\in\mathbb{N}$, a finite set $\Delta(\xbar,\ybar)$ of quantifier-free 
$\calL$-formulas,  and for each relation $R(\xbar')\in \calL$, a corresponding formula $\delta_R(\xbar';\ybar)$, which is a boolean combination of elements from $\Delta(\xbar;\ybar)$, so that

$$\theta(\ybar)\vdash  \left(\bigwedge_{\phi\in\Delta}\bigwedge_{x\in\cup \xbar} \forall x \exists^{\le k} \xhat\phi(x,\xhat,\ybar)\right)\wedge\left(\bigwedge_{R\in \calL} \forall\xbar'
[R(\xbar')\leftrightarrow \delta_R(\xbar',\ybar)]\right).$$

}
\end{Definition}

The following Lemma amounts to simply unpacking the definitions, with $(2)\Rightarrow(3)$ being an instance of compactness.

\begin{Lemma}  \label{lem:frame}  The following are equivalent for a (possibly incomplete) $\calL$-theory $T$:
\begin{enumerate}
\item  $T$ is mutually algebraic;
\item  For every $\calM\models T$, there is a totally bounded frame $\theta(\ybar)$ so that $\calM\models\exists\ybar\theta(\ybar)$; and
\item  There is a finite set $\{\theta_j(\ybar_j):1\le j\le m\}$ of totally bounded frames such that $T\vdash\bigvee_{j=1}^m \exists\ybar_j\theta_j(\ybar_j)$.
\end{enumerate}
\end{Lemma}

\subsection{Counting dichotomies for mutually algebraic properties.}\label{ss:macase}
In this subsection we analyze the possible speeds of mutually algebraic hereditary $\calL$-properties.  
The main idea is that, via totally bounded frames $\theta(\ybar)$, any mutually algebraic hereditary $\calL$-property  $\calH$ is essentially controlled by finitely many totally bounded properties
$\calH_\theta^*$, although each of these totally bounded properties will have its own language $\calL_\theta$.
The new language $\calL_\theta$ consists of the constants of $\calL$, $s$ new constant symbols, where $s=|\ybar|$, and a new relation symbol $R_\phi(\xbar')$ for each $\phi(\xbar',\ybar)\in\Delta(\xbar,\ybar)$.

\begin{Definition}  \label{def:constants} {\em  Suppose $\calH$ is a hereditary $\calL$-property and $s\in\mathbb{N}$.  Let $\calL(s):=\calL\cup\{c_1,\dots,c_s\}$, where each $c_i$ is a new constant symbol not in $\calL$.  
For any $\calN\in\calH$ and any $\abar\in N^s$, let $\calN_{\abar}$ denote the natural expansion of $\calN$ to an $\calL(s)$-structure obtained by interpreting each $c_i$ as $a_i$.
Let $\calH(s):=\{\calN_{\abar}:\calN\in\calH, \abar\in N^s\}$. 
}
\end{Definition}

When $\calL(s)$ is clear from context, we will write $\cbar$ to denote $(c_1,\ldots, c_s)$, the tuple of new constant symbols.  Clearly, if $\calH$ is a hereditary $\calL$-property, then $\calH(s)$ is a hereditary $\calL(s)$-property.  
%
Moreover, for any integer $n$, 
$$|\calH_n|\le |\calH(s)_n|\le n^s|\calH_n|.$$

\begin{Definition}
Given a totally bounded frame $\theta(\ybar)$ with $|\ybar|=s$, 
let 
$$
\calH_\theta:=\{\calN\in\calH(s):\calN\models\theta(\cbar)\}.
$$
\end{Definition}

Observe that for any totally bounded frame $\theta(\ybar)$ with $|\ybar|=s$, since $\theta(\ybar)$ is a universal formula, and since $\calH(s)$ is a hereditary $\calL(s)$-property, we have that $\calH_\theta$ is also a hereditary $\calL(s)$-property.

\begin{Lemma} \label{lem:easycount} Suppose $\calH$ is a hereditary $\calL$-property $\calH$  and $\theta(\ybar)$ is a totally bounded frame with $|\ybar|=s$.  For every $n\in \mathbb{N}$, if $n(\theta):=|\{\calM\in\calH_n:\calM\models\exists\ybar\theta(\ybar)\}|$, then
$n(\theta)\le |(\calH_\theta)_n|\le n^s\cdot n(\theta)$. 
\end{Lemma}

\begin{proof}  
The inequalities are obvious, since for any $\calM\in\calH_n$ with $\calM\models\exists\ybar\theta(\ybar)$, there is at least one, and at most $n^s$ many $\abar\in M^s$, such that $\calM_{\abar}\models \theta(\abar)$.
\end{proof}

\begin{Definition} \label{def:lots} {\em  Suppose $\calH$ is a hereditary $\calL$-property and $\theta(\ybar)$ is a totally bounded frame with data $k,s$, $\Delta(\xbar,\ybar)$, and 
$\{ \delta_R(\xbar',\ybar):R(\xbar')\in\calL\}$ as in Definition \ref{def:frame}.  
\begin{itemize}
\item  Let $\Delta_{\cbar}(\xbar):=\{\phi(\xbar',\cbar):\phi(\xbar',\ybar)\in\Delta\}$.
\item  Let $\calL_\theta:=\{\text{constants of }\calL(s)\}\cup\{R_\phi(\xbar'):\phi(\xbar',\cbar)\in\Delta_{\cbar}\}$.
\item  If $\psi$ is a boolean combination of elements of $\Delta_{\cbar}$, let $\psi^*$ denote the $\calL_{\theta}$-formula obtained as follows: for each $\phi(\xbar';\cbar)\in \Delta_{\cbar}$, replace any instance of $\phi(\xbar';\cbar)$ in $\psi$ with $R_{\phi}(\xbar')$.   
\item  Define a function $f:\calH_\theta\rightarrow \{\calL_\theta$-structures$\}$ as follows.  Given $\calM\in\calH_\theta$, let $f(\calM)$ be  the $\calL_\theta$-structure with underlying set $M$, where $c^{f(\calM)}=c^{\calM}$ for all constants $c\in\calL_\theta$, and where $R_\phi^{f(\calM)}:=\{\bbar\in M^{|\xbar'|}:\calM\models\phi(\bbar,\cbar)\}$.
\item Let $\calH^*_\theta:=\{f(\calM):\calM\in\calH_\theta\}$.
\end{itemize}
}
\end{Definition}

We claim that for any $\calM\in \calH_\theta$, any relation $R\in \calL$, and any relation $R_\phi\in \calL_\theta$, $R^\calM$ is $0$-definable in $f(\calM)$ and $R_\phi^{f(\calM)}$ is $0$-definable in $\calM$. Indeed, given a relation $R\in \calL$, $R^\calM=\delta_R(\calM;\cbar)=\delta_R^*(f(\calM);\cbar)$, and given a relation $R_\phi$ of $\calL_\theta$, $R_\phi^{f(\calM)}=\phi(\calM;\cbar)$.  An easy induction on formulas then implies that for any $\ell \in \mathbb{N}$, $\calM$ and $f(\calM)$ have exactly the same $0$-definable subsets of $M^\ell$ (this also uses the facts that $\calL(s)$ and $\calL_\theta$ have the same set of constants, and that $\calM$ and $f(\calM)$ have the same realizations of said constants). These observations make the following lemma straightforward.

\begin{Lemma} \label{lem:hardcount}  Let $\calH$ be a hereditary $\calL$-property and let $\theta(\ybar)$ be a totally bounded frame.
Then the following hold.
\begin{enumerate}
\item The function $f:\calH_\theta\rightarrow\calH_\theta^*$ is bijection. 
\item  For any integer $n$, $f$ maps $(\calH_\theta)_n$ onto $(\calH^*_\theta)_n$, so $|(\calH_\theta)_n|=|(\calH^*_\theta)_n|$.
\item  For every $\calM\in\calH_\theta$, $\calM$ and $f(\calM)$ have the same number of $\sim$-classes (in the sense of Definition~\ref{def:eq}).
\item  $\calH_\theta^*$ is a totally bounded, hereditary $\calL_\theta$-property.
\end{enumerate}
\end{Lemma}

\begin{proof}  Proof of (1):  That $f$ maps $\calH_\theta$ onto $\calH^*_\theta$ is immediate by the definition of $\calH^*_\theta$.  To see that $f$ is injective, suppose $\calM,\calN\in \calH_\theta$ and $f(\calM)=f(\calN)$.
Then clearly, $M=N$, and $c^{\calM}=c^{f(\calM)}=c^{f(\calN)}=c^{\calN}$ for each constant symbol $c\in \calL(s)$.
Fix any relation symbol $R(\xbar')\in\calL(s)$.  Since the relation symbols in $\calL(s)$ are the same as in $\calL$, $R(\xbar')\in \calL$. 
Since $\calM,\calN\models\theta(\cbar)$, we have
$$\calM\models\forall\xbar' (R(\xbar')\leftrightarrow \delta_R(\xbar';\cbar)) \quad \hbox{and}\quad \calN\models\forall\xbar' (R(\xbar')\leftrightarrow \delta_R(\xbar',\cbar)).$$
Thus, 
$R^{\calM}=\{\bbar\in M^{|\xbar'|}:\calM\models\delta_R(\bbar;\cbar)\}=\{\bbar\in M^{|\xbar'|}:f(\calM)\models\delta_R^*(\bbar)\}$,
and dually, $R^{\calN}=\{\bbar\in N^{|\xbar'|}:\calN\models\delta_R(\bbar;\cbar)\}=\{\bbar\in N^{|\xbar'|}:f(\calN)\models\delta_R^*(\bbar)\}$. Since $f(\calM)=f(\calN)$, we have $\delta_R^*(f(\calM);\cbar)=\delta_R^*(f(\calN);\cbar)$. Thus, $R^{\calM}=R^{\calN}$, so the $\calL(s)$-structures $\calM$ and $\calN$ are equal.

Proof of (2): This follows immediately from (1), since for all $\calM\in \calH_\theta$, $\calM$ has underlying set $[n]$ if and only if $f(\calM)$ has underlying set $[n]$.

Proof of (3): This follows from the fact that for every power $\ell$, the subsets of $M^\ell$ defined by quantifier-free $\calL(s)$-formulas in $\calM$ are the same as those defined by quantifier-free $\calL_\theta$-formulas in $f(\calM)$ (see the remarks following Definition \ref{def:lots}).

Proof of (4):  It is clear that $\calH_\theta^*$ is closed under isomorphism, since $\calH_\theta$ is.  To see that $\calH^*_\theta$ is hereditary, choose any $\calM^*\in \calH_\theta^*$ and let $\calN^*\subseteq \calM^*$ be an $\calL_\theta$-substructure of $\calM^*$.  Let $N$ denote the underlying set of $\calN^*$.  We want to show that there is some $\calN\in\calH_\theta$ with $f(\calN)=\calN^*$.  By definition of $\calH_\theta^*$, there is some $\calM\in\calH_\theta$ so that $\calM^*=f(\calM)$.  Let $\calN$ be the $\calL(s)$-substructure of $\calM$ with underlying set $N$ (this is possible, since for each constant $c$ of $\calL(s)$, $c^{\calM}=c^{f(\calM)}=c^{\calN^*}\in N$).  As $\calH_\theta$ is hereditary by
Lemma~\ref{lem:easycount}, $\calN\in\calH_\theta$, so $f(\calN)$ is defined.
We  claim that $\calN^*=f(\calN)$.  Clearly, $\calN^*$ and $\calN$ have the same underlying set, $N$.  For any constant symbol $c$ of $\calL_\theta$, 
$c^{f(\calN)}=c^{\calN}=c^{\calM}=c^{f(\calM)}=c^{\calN^*}$, with the first and third equalities arising by the definition of $f$ and the second and fourth by the definition of being a substructure.
Now choose any relation symbol $R_\phi(\xbar')\in\calL_\theta$.  Say $|\xbar'|=\ell$.  Then
$$
R_\phi^{\calN^*}=R_\phi^{f(\calM)}\cap N^\ell=\phi^{\calM}\cap N^\ell=\phi^{\calN}=R_\phi^{f(\calN)},
$$
where the first equality is from $\calN^*$ being an $\calL_\theta$-substructure of $f(\calM)$ and the underlying sets of $\calN,\calN^*$ both being $N$, the second and fourth equalities are from the definition of $f$, and the third equality is from
$\calN$ being an $\calL(s)$-substructure of $\calM$.  Thus, $\calN^*=f(\calN)$, so $\calN^*\in\calH_{\theta}^*$.  Therefore, we have shown that $\calH_{\theta}^*$ is a hereditary $\calL_\theta$-property.

That $\calH_{\theta}^*$ is totally bounded follows from the fact that for every $\calM\in \calH_\theta$, every $\phi(\xbar',\cbar)\in\Delta_{\cbar}$ is $k$-mutually algebraic in $\calM$ (since $\theta(\ybar)$ is a totally bounded frame), hence every $R_\phi\in\calL_{\theta}$ is $k$-mutually algebraic in $f(\calM)$.  
\end{proof}

We combine Lemmas~\ref{lem:frame} and \ref{lem:hardcount} to get a decomposition of any mutually algebraic property $\calH$.

\begin{Proposition}  \label{prop:decomp}  Suppose $\calH$ is a mutually algebraic hereditary $\calL$-property.   Then there are positive integers $s$ and $m$, such that for each $j\in [m]$, there is a finite relational language $\calL_j$, a totally bounded hereditary $\calL_j$-property $\tildeH_j$, and a map
$\pi_j:\tildeH_j\rightarrow\calH$ satisfying:
\begin{enumerate}
\item  For every $\calM\in\tildeH_j$, the following hold.
\begin{enumerate}
\item  $\calM$ and $\pi_j(\calM)$ have the same universe $M$; and
\item  the number of $\sim$-classes of $\pi_j(\calM)$ is at most the number of $\sim$-classes of $\calM$.
\end{enumerate}
\item  For every integer $n$, the restriction of $\pi_j$ to $(\tildeH_j)_n$ is at most $n^s$-to-one.
\item $\calH=\bigcup_{j=1}^m\{\pi_j(\calM): \calM\in \tildeH_j\}$.
\end{enumerate}
\end{Proposition}

\begin{proof}  
As $T_{\calH}$ is mutually algebraic, Lemma \ref{lem:frame} implies there exists a set $\{\theta_j(\ybar_j):j\in[m]\}$ of  totally bounded frames such that for every $\calM\in\calH$, there is some $j\in [m]$ so that $\calM\models   \exists\ybar_j\theta_j(\ybar_j)$.
Set $s:=\max\{|\ybar_j|:j\in[m]\}$.  For each $j\in [m]$, let $\calL_j:=\calL_{\theta_j}$, $\tildeH_j:=\calH^*_{\theta_j}$, and let $f_j:\calH_{\theta_j}\rightarrow \tildeH_j$ be as in Definition \ref{def:lots} applied to $\theta_j(\ybar_j)$.  Note that by Lemma \ref{lem:hardcount}, each $\tildeH_j$ is a totally bounded hereditary $\calL_j$-property.

For each $j\in [m]$, set $\calH_j:=\{\calM\in\calH:\calM\models\exists\ybar_j\theta_j(\ybar_j)\}$ and define a map $\pi_j:\tildeH_j\rightarrow\calH_j$ as follows:  given $\calM\in \tildeH_j$, let $\pi_j(\calM)$ be the $\calL$-reduct of the $\calL(|\ybar_j|)$-structure, $f_j^{-1}(\calM)$ (note $f_j^{-1}(\calM)$ is well defined since $f_j$ is injective).

Then (1a) holds by definition of $\pi_j$, and (1b) holds by Lemma \ref{lem:hardcount}(3) and the fact that, in general, the number of $\sim$-classes of a structure is non-increasing when taking reducts.  Property (2) follows from Lemmas \ref{lem:easycount} and \ref{lem:hardcount}(2). For (3), it suffices to show that for each $j\in [m]$, $\calH_j=\{\pi_j(\calM):\calM\in \tildeH_j\}$ (since $\calH=\bigcup_{j=1}^m\calH_j$).  To this end, fix $j\in [m]$.  We show first that $\calH_j\subseteq \{\pi_j(\calM):\calM\in \tildeH_j\}$. Suppose $\calN\in \calH_j$. By definition of $\calH_j$, there exists $\abar\in N^{|\ybar_j|}$ such that $\calN\models \theta_j(\abar)$.  Note $\calN_{\abar}\in \calH_{\theta_j}$.  Let $\calN^*=f_j(\calN_{\abar})\in \tildeH_j$.  Since $f_j$ is a bijection (Lemma \ref{lem:hardcount}), $f_j^{-1}(\calN^*)=f_j^{-1}(f_j(\calN_{\abar}))=\calN_{\abar}$.  Clearly the $\calL$-reduct of $\calN_{\abar}$ is $\calN$, and thus $\pi_j(\calN^*)=\calN$. This shows $\calN\in   \{\pi_j(\calM): \calM\in \tildeH_j\}$.  

We now show $\calH_j\supseteq \{\pi_j(\calM):\calM\in \tildeH_j\}$.  Suppose $\calN\in \{\pi_j(\calM): \calM\in \tildeH_j\}$.  Let $\calN^*\in \tildeH_j$ be such that $\calN=\pi_j(\calN^*)$, i.e., $\calN$ is the $\calL$-reduct of $\calN':=f_j^{-1}(\calN^*)$.  By definition of $f_j$, $\calN'\in \calH_{\theta_j}$.  By definition of $\calH_{\theta_j}$, the $\calL$-reduct of $\calN'$ must be in $\calH_j$, so $\calN\in \calH_j$.
\end{proof}

We now combine the results of Sections \ref{sec:eu} and \ref{sec:tb} to prove counting dichotomies for the speed of a mutually algebraic property.  We do this by characterizing their speeds in terms of the speeds of totally bounded properties.

\begin{Theorem}\label{thm:mamainthm1}
Suppose $\calH$ is mutually algebraic.  Then one of the following holds: $\calH$ is basic,  $|\calH_n|=n^{(1-1/\ell-o(1))n}$ for some $\ell\geq 2$, or $|\calH_n|\geq n^{(1-o(1))n}$. 
More specifically, let $\{\tildeH_j:j\in[m]\}$ be as in Proposition~\ref{prop:decomp}.  Then one of the following holds:
\begin{enumerate}[(a)]
\item For some $j\in [m]$, $\tildeH_j$ has infinite components. In this case, $|\calH_n|\geq n^{n(1-o(1))}$.
\item For every $j\in [m]$, $\tildeH_j$ has finite components, and $\ell\geq 2$ is maximal such that for some $j\in [m]$, $\calH^*_j$ has infinitely many components of size $\ell$.  In this case, $|\calH_n|=n^{n(1-1/\ell-o(1))}$.
\item For every $j\in [m]$, $\tildeH_j$ is basic. In this case, $\calH$ is basic.
\end{enumerate}
\end{Theorem}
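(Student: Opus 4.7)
The plan is to apply Proposition \ref{thm:tbc} to reduce to the totally bounded setting, classify each covering property by Theorem \ref{thm:tb}, and combine via Lemma \ref{lem:tbc}. Fix a totally bounded cover $\{(\calL_i,\calH_i,\alpha_i):i\in [m]\}$ of $\calH$ provided by Proposition \ref{thm:tbc}, and set $c=\max_i|\calL_i|$. By Theorem \ref{thm:tb}, each $\calH_j$ is either basic, has infinite components (in which case $|(\calH_j)_n|\geq n^{n(1-o(1))}$), or has finite components with some largest $\ell_j\geq 2$ for which $\calH_j$ has infinitely many components of that size (in which case $\calH_j$ is factorial of degree $\ell_j$). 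These three options on the cover lead directly to the three enumerated cases of the theorem, so it suffices to verify the speed estimates in each.

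Cases (a) and (b) are quantitative and follow immediately from Lemma \ref{lem:tbc}. In case (a), some $\calH_{j^*}$ has infinite components, so $|\calH_n|\geq n^{-c}|(\calH_{j^*})_n|\geq n^{n(1-o(1))}$. In case (b), all $\calH_j$ have finite components and $\ell\geq 2$ is chosen as in the statement; setting $\ell_j=1$ when $\calH_j$ is basic, we have $\ell=\max_j\ell_j$. Combining Lemma \ref{lem:tbc} with Theorem \ref{thm:tb} yields the upper bound $|\calH_n|\leq \sum_j|(\calH_j)_n|\leq m\cdot n^{n(1-1/\ell+o(1))}=n^{n(1-1/\ell+o(1))}$ and, picking $j^*$ with $\ell_{j^*}=\ell$, the lower bound $|\calH_n|\geq n^{-c}|(\calH_{j^*})_n|\geq n^{n(1-1/\ell-o(1))}$, giving exactly $|\calH_n|=n^{n(1-1/\ell-o(1))}$.

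The interesting case is (c), where every $\calH_j$ is basic and the claim is that $\calH$ itself is basic. Fix constants $k_j$ bounding the number of $\sim$-classes in each $\calN'\in\calH_j$, and set $k=\max_jk_j$. The key observation is that if $\calN'\in\calH_j$ and $a\sim_{\calL_j}b$ in $\calN'$, then $a\sim_{\calL}b$ in $\overline{\alpha}_j(\calN')$. Indeed, for any relation $R(\xbar)$ of $\calL$, the interpretation $R^{\overline{\alpha}_j(\calN')}$ is the realization set in $\calN'$ of the fixed Boolean combination $\alpha_j(R)$ of $\calL_j$-relations in the same variables $\xbar$; since the biconditionals appearing in the definition of $\sim$ hold for every $\calL_j$-atomic formula with parameters in $M\setminus\{a,b\}$, they pass through this Boolean combination. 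Hence $\sim_{\calL_j}$ refines $\sim_{\calL}$ on $\overline{\alpha}_j(\calN')$, so any $\calM\in\overline{\alpha}_j(\calH_j)$ has at most $k$ distinct $\sim_{\calL}$-classes. Absorbing the finitely many (bounded-size) structures in the trivial residual $\calF$ into a slightly larger uniform bound shows $\calH$ is basic.

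The step I expect to require the most care is the refinement claim in case (c): it is conceptually straightforward, but one has to check that having $\alpha_j(R)$ be a Boolean combination in the \emph{same} variables $\xbar$, rather than a formula introducing new quantification or new parameter patterns, is exactly what guarantees that both the permutation biconditional and the substitution biconditional defining $\sim$ are preserved. Once this is verified the argument reduces to a clean refinement-of-equivalence-relations estimate, and the remaining pieces are direct applications of the results of Section \ref{sec:tb}.
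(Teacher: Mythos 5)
Your proposal is correct and follows essentially the same route as the paper's proof: obtain a totally bounded cover via Proposition \ref{thm:tbc}, classify each $\calH_j$ by Theorem \ref{thm:tb}, combine the quantitative cases with Lemma \ref{lem:tbc}, and in the all-basic case observe that the relational interpretations can only merge $\sim$-classes. The only place you go beyond the paper is in spelling out why $\sim_{\calL_j}$ refines $\sim_{\calL}$ under $\overline{\alpha}_j$ (the paper asserts this without justification), and your Boolean-combination argument for that step is correct.
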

\begin{proof}
Fix $m,s\in \mathbb{N}$, and $\{\calL_j,\tildeH_j,\pi_j:j\in [m]\}$ be as in Proposition~\ref{prop:decomp}.  For each $j\in [m]$, let $\pi_j(\tildeH_j)=\{\pi_j(\calM): \calM\in \tildeH_j\}$.  We split into cases depending on the complexity of the totally bounded properties $\tildeH_j$.

Suppose first that for each $j\in [m]$, $\tildeH_j$ is basic. Then there is $K\in \mathbb{N}$ such that for each $j\in [m]$, every element of $\tildeH_j$ has at most $K$ distinct $\sim$-classes.  By Proposition~\ref{prop:decomp}(1b), every $\pi_j(\calM)$ has at most $K$ distinct $\sim$-classes.  Since $\calH=\bigcup_{j=1}^m\pi_j(\tildeH_j)$, this implies $\calH$ is basic as well.

Suppose now that for some $j\in [m]$, $\tildeH_j$ has infinite components. Then by Theorem \ref{thm:tb}, $|(\tildeH_j)_n|\geq n^{n(1-o(1))}$, so  by Proposition~\ref{prop:decomp}(2)
$|(\pi_j((\tildeH_j)_n)|\geq n^{-s}\cdot n^{n(1-o(1))}=n^{n(1-o(1))}$.  Thus,
 $|\calH_n|\geq n^{n(1-o(1))}$, since $\pi_j(\tildeH_j)\subseteq \calH$.

We are left with the case where $J:=\{j\in [m]: \tildeH_j\text{ is not basic}\}\neq \emptyset$ and for each $j\in [m]$, $\tildeH_j$ has finite components.  For each $j\in J$, let $w(j)\geq 2$ be the maximum integer such that some element of $\tildeH_j$ has infinitely many components of size $w(j)$.  By Theorem \ref{thm:tb}, $|(\tildeH_j)_n|=n^{n(1-1/w(j)-o(1))}$. Thus if $\ell=\max\{w(j): j\in [m]\}$, these observations and Proposition~\ref{prop:decomp} imply 
$$
n^{-s}\cdot n^{n(1-1/\ell-o(1))}\leq |\calH_n|\leq |J|n^{n(1-1/\ell+o(1))},
$$
which implies $|\calH_n|=n^{n(1-1/\ell+o(1))}$, since $|J|\leq m$ and $s$ are constants.
\end{proof}

Note that Theorem \ref{thm:mamainthm1} together with Corollary \ref{lem:tbb} give us a strong structural understanding of the properties in the factorial range, although we are required to consider properties in other languages.  In forthcoming work, the authors consider characterizations of these properties in terms of the original language, in analogy to the structural characterizations of the factorial range for graph properties from \cite{BBW1}.  This work also shows the gap between the factorial and penultimate range is directly related to \emph{cellularity}, a notion with several interesting model theoretic formulations (see for instance \cite{LM:growth, macpherson_schmerl_1991, schmerl_1990}).

\subsection{A lower bound for non-mutually algebraic hereditary classes}  \label{ss:lbma}

The goal of this subsection is to prove Proposition~\ref{thm:malowerbound}, which shows that if a hereditary $\calL$-property $\calH$ is not mutually algebraic, then $|\calH_n|\geq n^{(1-o(1))n}$.  
 In order to do this, we need to introduce  concepts and quote  results from \cite{ourpaper} that describe the structure of large models of $T_\calH$.
Throughout this subsection, assume that $\calL$ is a finite relational language where every atomic formula has free variables among $\zbar$, with $|\zbar|=r$.  

In a prior version \cite{oldversion} of this article, there was a gap in the proof of Proposition \ref{thm:malowerbound} (Proposition 4.10 there).  Remedying this required additional model theoretic results, namely Lemma 4.21 and Proposition 4.25.  The interested reader without model theoretic training may wish to also refer to  \cite{oldversion}, since the statements of the results there are still correct and use less model theoretic terminology.

Note that whenever $A\subseteq B\subseteq M$, there is a natural projection from $S_{\xbar}(B)$ onto  $S_{\xbar}(A)$ given by restriction, i.e. given $p\in S_{\xbar}(B)$,
let 
$$p\mr{A}:=\{\theta(\xbar,\abar)\in p:\cup\abar\subseteq  A\}.$$

An easy induction on the complexity of quantifier-free formulas shows that for any two distinct $p,q\in S_{\xbar}(A)$, there is some atomic $\calL$-formula
$\alpha(\xbar,\ybar)$ and some $\abar\in A^{|\ybar|}$ such that $\alpha(\xbar,\abar)$ is in the symmetric difference  $p\triangle q$.  Iterating this gives a bound on the size of a separating family for finitely many types.

\begin{Lemma}  \label{lem:distinct}  Suppose $\calM$ is any $\calL$-structure.  For any subset $A\subseteq M$ and any $\xbar\subseteq\zbar$, 
if $\{p_i(\xbar):i\in[m]\}\subseteq S_{\xbar}(A)$ are distinct, then there is $B\subseteq A$, such that $|B|\le mr$ and such that $\{p_i\mr{B}:i\in[m]\}$ are pairwise distinct.
\end{Lemma}

\begin{proof}  Given such a set of types, $\{p_i(\xbar):i\in[m]\}$, we claim by induction on $0\leq j\le m$ that there is $B_j\subseteq A$ of size at most $jr$ such that
$$|\{p_i\mr{B_j}:i\in[m]\}|\ge j,$$
which suffices to prove the Lemma.  For $j=0$, the claim is trivially by taking $B_0=\emptyset$.  So assume $0\leq j<m$, and suppose by induction that $B_j$ is chosen so that $|B_j|\leq jr$ and $|\{p_i\mr{B_j}:i\in[m]\}|\ge j$.  
Let $\ell:=|\{p_i\mr{B_j}:i\in[m]\}|$.  If $\ell\geq j+1$, then taking $B_{j+1}=B_j$ suffices.  However, if $\ell=j$, then as $j<m$, there are distinct $i,i'\in[m]$ such 
that $p_i\mr{B_j}=p_{i'}\mr{B_j}$.  As $p_i\neq p_{i'}$, by the observation above, there is an atomic $\alpha(\xbar,\abar)\in p_i\triangle p_{i'}$.
Setting $B_{j+1}:=B_j\cup\{\cup \abar\}$ then suffices (note $|B_{j+1}|\leq |B_j|+r\leq (j+1)r$).
\end{proof}

\begin{Definition}\label{def:supp2}{\em 
Suppose $\calM$ is an $\calL$-structure.  An  \emph{infinite array} in $\calM$ is any set of the form $\{\dbar_i: i\in \mathbb{N}\}\subseteq M^k$ for some $k\geq 1$, such that $(\cup \dbar_i)\cap(\cup \dbar_j)=\emptyset$ for distinct $i,j\in\mathbb{N}$.

Given $A\subseteq M$ and $p\in S_{\xbar}(A)$, we say $p$ \emph{supports an infinite array} in $\calM$ there is an infinite array of realizations of $p$ in $\calM$. }
\end{Definition}

\begin{Definition}  \label{def:supp1}{\em  An $\calL$-structure $\calU$ is {\em $\aleph_1$-saturated\/}\footnote{This notion usually refers to realizations of types involving quantifiers, but here we consider only quantifier-free types.}  if, for every countable set $A\subseteq U$, for every $\xbar\subseteq\zbar$, and for every 
$p(\xbar)\in S_{\xbar}(A)$, $\calU$ {\em realizes $p$}, i.e., there is $\bbar\in U^{|\xbar|}$ such that $\calU\models\theta(\bbar,\abar)$ for every $\theta(\xbar,\abar)\in p$.
}
\end{Definition}

Suppose $\calU$ is $\aleph_1$-saturated.  A type $\pp\in S_{\xbar}(U)$ is called a {\em global type}.  Such types $p$ are typically not realized in $\calU$, but for any countable set $A\subseteq U$, the restriction $\pp|A$ is realized in $\calU$.  In \cite{ourpaper}, the authors identify three important classes of global types.

\begin{Definition} {\em  Suppose $\calU$ is $\aleph_1$-saturated and $\xbar\subseteq\zbar$.

\vspace{2mm}
\noindent$\bullet$  $\Supp_{\xbar}(\calU):=\{\pp\in S_{\xbar}(U): \text{for every countable }A\subseteq U, \text{ there is }\bbar\in (U\setminus A)^{|\xbar|}\text{ realizing } \pp|A\}$.

\vspace{2mm}
\noindent$\bullet$ $\QMA_{\xbar}(\calU)=\{\pp\in \Supp_{\xbar}(\calU):\pp\text{ contains a mutually algebraic formula of the form }\theta(\xbar,\abar)\}$.

\vspace{2mm}
\noindent$\bullet$  A type $\pp\in\Supp_{\xbar}(\calU)$ is {\em array isolated} if there is some $\theta(\xbar,\abar)\in\pp$ such that $\pp$ is the unique element of $\Supp_{\xbar}(\calU)$ containing $\theta(\xbar;\abar)$.
}
\end{Definition}

Global types $\pp\in\Supp_{\xbar}(\calU)$ are called {\em supportive types}.  This is because an easy compactness argument shows that for a global type $\pp$, $\pp\in \Supp_{\xbar}(\calU)$ if and only
if for every countable $A\subseteq U$, $\pp|A$ supports an infinite array in $\calU$.


In Theorem~6.1 of \cite{ourpaper}, the authors give several
equivalents of mutual algebraicity in a finite, relational language.

\begin{Theorem}[Theorem 6.1 of \cite{ourpaper}]    \label{mainthmold}
Suppose $\calU$ is an $\aleph_1$-saturated $\calL$-structure.  Then the following are equivalent.
\begin{enumerate}
\item  $Th(\calU)$ is mutually algebraic;
\item  For all $\xbar\subseteq\zbar$, $\Supp_{\xbar}(\calU)$ is finite;
\item  For all $\xbar\subseteq\zbar$, $\QMA_{\xbar}(\calU)$ is finite;
\item  For all $\xbar\subseteq\zbar$, every $\pp\in\Supp_{\xbar}(\calU)$ is array isolated.
\end{enumerate}
\end{Theorem}

Moreover, the proof of Theorem \ref{mainthmold} in \cite{ourpaper} shows the equivalence of (2), (3), and (4) holds locally for each tuple $\xbar\subseteq \zbar$.  A crucial idea in this section will be, given a non-mutually algebraic hereditary property $\calH$, to consider the {\em shortest}  $\xbar\subseteq\zbar$ for which Clause (3) of 
Theorem~\ref{mainthmold} fails in some $\aleph_1$-saturated $\calU\models T_{\calH}$ (e.g. this will occur in the proof of Proposition \ref{prop:notMA}).  In a related vein, our next lemma, Lemma \ref{5.9}, considers what one can deduce about types in the variables $\xbar$, when every proper subtuple $\xbar'\subsetneq \xbar$ satisfies Clause (3) above.  To state Lemma \ref{5.9}, we first require  a definition and some further results from \cite{ourpaper}.

%

\begin{Definition}\label{def:product}{\em 
Suppose $\calU$ is an $\aleph_1$-saturated $\calL$-structure and $\pp(\xbar)\in \Supp_{\xbar}(\calU)$ and $\qq(\ybar)\in\Supp_{\ybar}(\calU)$ are array isolated, global types in disjoint variables $\xbar,\ybar\subseteq \zbar$.  The \emph{free product} $\pp\otimes\qq$ is defined to be the set of formulas $\theta(\xbar,\ybar;\abar)$, such that for some countable $\calM\prec \calU$ with $\cup \abar\subseteq M$, some $\cbar\in U^{|\xbar|}$ realizing $\pp\mr{M}$, and some $\dbar\in U^{|\ybar|}$ realizing $\qq\mr{M\cbar}$, it holds that $\calU\models \theta(\cbar,\dbar;\abar)$.

}
\end{Definition}

In \cite{ourpaper}, the authors showed that in the notation of Definition \ref{def:product}, $\pp\otimes\qq\in \Supp_{\xbar\ybar}(\calU)$, and moreover, $\theta(\xbar,\ybar,\abar)\in\pp\otimes\qq$ if and only if  $\calU\models\theta(\cbar,\dbar,\abar)$ for every countable $\calM\preceq\calU$ with $\cup \abar\subseteq M$, for every $\cbar\in U^{|\xbar|}$ realizing $\pp\mr{M}$,
and for every $\dbar\dbar\in U^{|\ybar|}$ realizing $\qq\mr{M\cbar}$.

We are now ready to state Lemma \ref{5.9}.  Its proof arises from simply relativizing the proof of Proposition~5.9 of \cite{ourpaper} to a subsequences $\xbar\subseteq\zbar$ with only finitely many supportive types.


\begin{Lemma}[{\it cf.} Proposition 5.9 of \cite{ourpaper}]  \label{5.9}
Let $\calU$ be an $\aleph_1$-saturated $\calL$-structure.
Suppose $\xbar\subseteq\zbar$ is a non-empty subtuple, and $\QMA_{\xbar'}(\calU)$ is finite for all $\xbar'\subsetneq\xbar$.  Then the following hold.
\begin{enumerate}
\item  For all $\xbar'\subsetneq\xbar$, every $\pp\in\Supp_{\xbar'}(\calU)$ is a free product of at most $r$ types 
from $\bigcup\{\QMA_{\xbar''}(\calU):\xbar''\subseteq\xbar'\}$.
In particular $\Supp_{\xbar'}(\calU)$ is finite.
\vspace{2mm}
\item  Every $\pp\in\Supp_{\xbar}(\calU)\setminus \QMA_{\xbar}(\calU)$ is a free product of at most $r$ types 
 from $\bigcup\{\QMA_{\xbar'}(\calU):\xbar'\subsetneq\xbar\}$.  Thus,
$\Supp_{\xbar}(\calU)\setminus\QMA_{\xbar}(\calU)$ is finite as well.
\end{enumerate}
\end{Lemma}

\begin{proof}  For (1), this is a direct consequence of the proof of Proposition~5.9 of \cite{ourpaper}. 
%

We now show (2).  Choose $\pp\in\Supp_{\xbar}(\calU)\setminus \QMA_{\xbar}(\calU)$, and let 
$\calM\preceq\calU$ be any countable submodel.  Choose a realization
$\cbar$ of $\pp\mr{M}$ in $\calU$, and fix a maximal mutually algebraic decomposition $\cbar=\cbar_1\wedge\dots\wedge\cbar_s$ of $\cbar$ (i.e. for each $1\leq i\leq s$, $\cbar_i$ satisfies a quantifier-free mutually algebraic formula with parameters from $M$, and $s$ is as small as possible).  Since $\qftp(\cbar/M)$ is not mutually algebraic,
$s>1$.  For each $j$, let $\qq_j$ be the global type extending $\qftp(\cbar_j/M)$.  Then run the argument from the proof of Proposition~5.9 to conclude that if
$\pp\neq \qq_1\otimes\dots\otimes\qq_s$, then $\qftp(\cbar/M)$ would contain a mutually algebraic formula $\theta(\xbar,\mbar)$, contradicting $\pp\not\in\QMA_{\xbar}(\calU)$.
\end{proof}

Observe that when $|\xbar|=1$, Lemma \ref{5.9} (1) is vacuous, and (2) is immediate (since in that case, $\Supp_{\xbar}(\calU)=\QMA_{\xbar}(\calU)$).  Further, we point out that if $Th(\calU)$ is mutually algebraic, then Lemma \ref{5.9} follows immediately from Proposition~5.9  and Theorem~6.1 of \cite{ourpaper}.

With the technical machinery above, our next step is to describe an {\em indiscernible grid} (Definition \ref{def:grid} below) and show that if $\calH$ is any non-mutually algebraic hereditary property, then
there is an indiscernible grid $\calN\models T_{\calH}$ (Proposition \ref{prop:notMA} below).  From this $\calN$, we will extract a family of finite substructures that we will will use to prove Proposition~\ref{thm:malowerbound}.

\begin{Definition} \label{def:grid1} {\em  Suppose $\calM$ is an infinite $\calL$-structure, $\xbar\subseteq \zbar$ is a non-empty subtuple, and $P=\{p_i(\xbar):i\in \mathbb{N}\}$ is a set of distinct types from $S_{\xbar}(M)$.    A {\em grid for $\calM$ and $P$} is an $\calL$-structure $\calN$ whose universe has the form $N=M\cup\bigcup\{\cup\bbar_{i,q}:i\in\mathbb{N},q\in\Q\}$, such that the following holds.
\begin{enumerate}
\item For each $i\in \mathbb{N}$ and $q\in \mathbb{Q}$, $\bbar_{i,q}\in (N\setminus M)^s$, and $|\cup \bbar_{i,q}|=s$, where $s=|\xbar|$. 
\item For each $i\in \mathbb{N}$, $\{\bbar_{i,q}: q\in \mathbb{Q}\}$ is a set of realizations of $p_i(\xbar)$.
\item Each $b\in N\setminus M$ is contained in exactly one $\bbar_{i,q}$.
\end{enumerate}
}
\end{Definition}

Suppose $\calN$ is a grid for $\calM$ and $P$, in the notation of Definition \ref{def:grid1}. Observe that (1) implies that for each $i\in \mathbb{N}$, and every $x\neq x'$ from the tuple $\xbar$, we have that the formula $x\neq x'$ is in $p_i(\xbar)$.  Given $\sigma\in Aut(\mathbb{Q},\leq)$, note that $\sigma$ induces a permutation $\sigma^*$ on $N$ as follows:
$\sigma^*(m)=m$ for all $m\in M$ and $\sigma^*(\bbar_{i,q})=\bbar_{i,\sigma(q)}$.

\begin{Definition}\label{def:grid}  {\em Suppose $\calN$ is a grid for $\calM$ and $P$, in the notation of Definition \ref{def:grid1}.  
\begin{enumerate}
\item We say $\calN$ is an {\em indiscernible grid for $\calM$ and $P$} if for every $\sigma\in Aut(\Q,\le)$, $\sigma^*$ is an $\calL$-automorphism of $\calN$.
\item Assuming $\calN$ is an indiscernible grid for $\calM$ and $P$, a {\em hybrid tuple} is an $s$-tuple $\dbar\in (N\setminus M)^s$ such that $\dbar$ is not a permutation of any
$\bbar_{i,q}$.  A \emph{hybrid type} is an element of $S_{\xbar}(M)$ of the form $\qftp(\dbar/M)$, for some hybrid tuple $\dbar$.  

\end{enumerate}
}
\end{Definition}

\begin{Remark}  {\em  Note that if $s=1$, then there are no hybrids.  
This is what makes the $s=1$ case much easier than the general case in what follows.}
\end{Remark}



\begin{Proposition} \label{prop:notMA}   Suppose $T$ is any universal $\calL$-theory that is not mutually algebraic.
Then there are $1\leq s\le r$,  an integer $e$,  a countable $\calM\models T$, and an infinite set $P=\{p_i(\xbar):i\in\mathbb{N}\}\subseteq S_{\xbar}(M)$ with $|\xbar|=s$,
such that each $p_i(\xbar)$ contains a mutually algebraic formula $\theta_i(\xbar)$.  Moreover:

\begin{enumerate}
\item  There exists an indiscernible grid $\calN$ for $\calM$ and $P$ such that $\calN\models T$; and
\item  $|\{\qftp(\dbar/M):\dbar\in N^s$ a hybrid tuple$\}|\le e$.
\end{enumerate}
\end{Proposition}


\begin{proof}  As $T$ is not mutually algebraic, use  Theorem~\ref{mainthmold}(3) of \cite{ourpaper} to choose $1\leq s\le r$ least such that there exists $\xbar\subseteq \zbar$ with $|\xbar|=s$, and an 
$\aleph_1$-saturated $\calU\models T$ with infinitely many distinct global types $S=\{\pp_i(\xbar)\in \QMA_{\xbar}(\calU):i\in\mathbb{N}\}$.
 Fix such a $\calU$, $\xbar$, and $S=\{\pp_i(\xbar)\in \QMA_{\xbar}(\calU):i\in\mathbb{N}\}$.  
  By the minimality of $s$, for any proper subsequence $\xbar'\subsetneq\xbar$,
there are only finitely many $\qq(\xbar')\in\QMA_{\xbar'}(\calU)$.  Thus, by eliminating at most finitely many types from $S$, we may assume
that for each $i\in\mathbb{N}$, $(x\neq x')\in\pp_i(\xbar)$ for all distinct $x,x'\in\xbar$.  Since each $\pp_i(\xbar)\in\Supp_{\xbar}(\calU)$,  $(x\neq m)\in \pp_i$ 
for all $x\in \xbar$ and $m\in \calU$.  Let $\calM\preceq \calU$ be any countable, elementary substructure such that the restrictions 
 $p_i(\xbar):=\pp_i\mr{M}$ are pairwise distinct 
  and set $P:=\{p_i(\xbar): i\in \mathbb{N}\}$.

To prove (1) holds, choose, for each $i\in\mathbb{N}$, an
infinite array $\{\dbar_{i,\ell}:\ell\in\mathbb{N}\}$ of realizations of $p_i(\xbar)$ in $\calU$.
By the pigeon-hole principle, we may assume, possibly after reindexing, that
$\dbar_{i,\ell}$ and $\dbar_{i',\ell'}$ are disjoint unless $(i,\ell)=(i',\ell')$.  
Let $\calM'\preceq\calU$ be a countable elementary substructure containing $M\cup\bigcup\{\dbar_{i,\ell}:i,\ell\in\mathbb{N}\}$.
Visibly, $\calM'$ contains a grid for $\calM$ and $P$, but it might not be indiscernible.
We obtain an indiscernible grid by compactness:  Let $\calL^*$ be $\calL$, adjoined with new constant symbols $\{\cbar_m:m\in M\}\cup\{\cbar_{i,q}:i\in\mathbb{N},q\in\Q\}$ (each $\cbar_{i,q}$ is an $s$-tuple of new 
constant symbols) and let $T^*$ be the $\calL^*$-theory asserting:
\begin{itemize}
\item  The elementary diagram of $\calM$;
\item  For each $i\in\mathbb{N}$, each $\cbar_{i,q}$ realizes $p_i(\xbar)$, 
\item For distinct $(i,q),(i',q')\in \mathbb{N}\times \mathbb{Q}$, $\cbar_{i,q}$ and $\cbar_{i',q'}$ are disjoint;
\item  For each $\sigma\in Aut(\Q,\le)$, $\sigma^*$ is an $\calL$-automorphism of $M\cup\bigcup\{\bigcup\cbar_{i,q}:i\in\mathbb{N},q\in\Q\}$.
\end{itemize}

Arguing by induction on the number of $\sigma$'s mentioned,  using Ramsey's theorem one shows  that every finite subset of $T^*$ can be realized in an $\calL^*$-expansion of $\calM'$.  
By compactness, $T^*$ has a model $\calM^*$.   For each $i\in \mathbb{N},q\in \mathbb{Q}$, let $\bbar_{i,q}$ be the realization of $\cbar_{i,q}$ in $\calM^*$.  Set $N=M\cup\bigcup\{\bigcup\bbar_{i,q}:i\in\mathbb{N},q\in\Q\}$, and $\calN^*=\calM^*[N]$.  Finally, let $\calN$ be the $\calL$-reduct of $\calN^*$. 

To prove (2), let $\calU\succeq \calM$ be an $\aleph_1$-saturated elementary extension.  By the minimality of $s$, for every proper $\xbar'\subsetneq\xbar$,
there are only finitely many $q(\xbar')\in S_{\xbar'}(M)$ that contain a mutually algebraic formula and support an infinite array.  It follows that
$\QMA_{\xbar'}(\calU)$ is finite for each proper $\xbar'\subsetneq\xbar$. Hence, $\calQ:=\bigcup\{\QMA_{\xbar'}(\calU):\xbar'\subsetneq\xbar\}$ is finite as well.

We claim that for every hybrid tuple $\dbar\in (N\setminus M)^s$, $\qftp(\dbar/M)$ is the restriction to $M$ of a free product of types from $\calQ$.
To see this, fix $\dbar\in (N\setminus M)^s$ that is not contained in any $\bbar_{i,q}$.  Clearly, $\qftp(\dbar/M)$ supports an infinite array (in fact, $\calN$ contains an infinite array of realizations of this type), but the indiscernibility demonstrates that $\qftp(\dbar/M)$ cannot contain a mutually algebraic formula.
Thus, the global extension $\pp$ of $\qftp(\dbar/M)$ is in $\Supp_{\xbar}(\calU)\setminus\QMA_{\xbar}(\calU)$.  So, by Lemma~\ref{5.9}(2), 
$\pp$ is the free product of at most $r$ global types from $\calQ$.  Since $\calQ$ is finite, this shows (2).

\end{proof}


\begin{Lemma} \label{lem:template} Suppose $\calH$ is a hereditary $\calL$-property that is not mutually algebraic.  Then there are positive integers $s\le r$ and $e$ such that for every positive integer $L$, there is $\calN_L\models T_{\calH}$ and quantifier-free formulas $\{\phi_i(\xbar,\abar):i\in[L]\}$ such that $N_L$ can be partitioned as 
$$
N_L=A_L\cup\{\cup \bbar_{i,q}:i\in [L],q\in\Q\},
$$
  so that the following hold.
\begin{enumerate}
\item For each $i\in [L]$ and $q\in \mathbb{Q}$, $\bbar_{i,q}\in N_L^s$ and $|\cup \bbar_{i,q}|=|\xbar|=s$.
\item  $|A_L|\le (L+e)r+w$, where $w$ is the number of constants from $\calL$.
\item For each constant $c$ of $\calL$, $c^{\calN_L}\in A_L$.
\item  $\cup \abar=A_L$.
\item  For all $i\in [L]$, the following hold.
\begin{enumerate}
\item  For all $q\in \Q$, $\calN_L\models\phi_i(\bbar_{i,q};\abar)$; and
\item  If  $\dbar\in (N_L)^s$ and $\calN_L\models\phi_i(\dbar;\abar)$, then  $\dbar$ is a permutation of some $\bbar_{i,q}$.
\end{enumerate}
\end{enumerate}
\end{Lemma}

\begin{proof}  As $T_{\calH}$ is not mutually algebraic, Proposition~\ref{prop:notMA} implies there exist positive integers $s\le r$ and $e$, a countable $\calM\models T_{\calH}$, an infinite set $P=\{p_i(\xbar):i\in \mathbb{N}\}\subseteq S_{\xbar}(M)$, where $|\xbar|=s$, and an indiscernible grid $\calN$ for $\calM$ and $P$, with
universe 
$$
N=M\cup\bigcup\{\cup\bbar_{i,q}:i\in\mathbb{N},q\in\Q\},
$$
such that there are at most $e$ many hybrid types realized in $\calN$ over $M$.  Let $\{q_j:j\in[e]\}$ be the hybrid types over $M$ realized in $\calN$.

Let $A_L$ be a minimal subset of $\calM$ containing $c^{\calN}$ for every constant $c$, and such that the restrictions of the types $\{p_i(\xbar): i\leq L+e\}$ to $A_L$ are pairwise distinct.  In light of Lemma~\ref{lem:distinct}, such an $A_L$ can be found of cardinality at most $(L+e)r+w$.

Let $\calN_L$ be the substructure of $\calN$ with universe $N_L=A_L\cup\{\bbar_{i,q}:i\in [L],q\in\Q\}$.
As $A_L$ is finite, every complete quantifier-free type over $A_L$ can be described by a single formula.  For each $i\in[L]$, let $\phi_i(\xbar;\abar)$ be the formula
describing the restriction $p_i\mr{A_L}$.  Since each $\bbar_{i,q}$ realizes $p_i$, we have $\calN_L\models\phi_i(\bbar_{i,q};\abar)$ for every $q\in\Q$.  By construction, we now have properties (1)-(3) as well as (4a).  

We just need to verify (4b).  To this end, fix $\dbar\in (N_L)^s$ such that $\calN_L\models\phi_i(\dbar;\abar)$ for some $i\in [L]$.  Since $p_i$ implies $x\neq m$ for every $x\in\xbar$ and $m\in M$, and since $\phi_i(\xbar;\abar)$ describes $p_i\mr{A_L}$, we must have $\dbar\in (N_L\setminus A_L)^s$. By definition of $N_L$ and $A_L$, this implies $\dbar\in (N\setminus M)^s$.  By our choice of $A_L$, for each $j\in [e]$, $p_i\mr{A_L}\neq q_j\mr{A_L}$, so $\qftp(\dbar/M)\notin \{q_j: j\in [e]\}$.  Thus $\dbar\in (N\setminus M)^s$ is not a hybrid tuple, and consequently must be a permutation of $\bbar_{i',q'}$ for some $i'\in \mathbb{N}$ and $q'\in \mathbb{Q}$. Since $\dbar\in N_L^s$, we must have $i'\in [L]$.  Finally, because $i'\in [L]$, $\dbar\models p_{i}\mr{A_L}$, and $p_i\mr{A_L}\neq p_{j}\mr{A_L}$ for all $j\in [L]\setminus\{i\}$, we must have that $i=i'$. 
\end{proof}

\vspace{2mm}
\begin{Proposition}\label{thm:malowerbound}
For any hereditary $\calL$-property $\calH$, if $\calH$ is not mutually algebraic, then  $|\calH_n|\geq n^{(1-o(1))n}$.
\end{Proposition}
\begin{proof}

Assume $\calH$ is a hereditary $\calL$-property which is not mutually algebraic.  Let $c$ denote the number of constants of $\calL$.  Apply Lemma~\ref{lem:template} to obtain $1\leq s\leq r$ and $e$.   We show that for every integer $t\geq 1$,  $|\calH_n|\geq n^{(1-\frac{r+1}{ts+r}-o(1))n}$.  This implies  $|\calH_n|\ge n^{(1-o(1))n}$.

Fix an integer $t\ge 1$ and choose $n\gg tre$.  We will construct a special $\calG\in \calH_n$, and then show there are many distinct elements of $\calH_n$, each isomorphic to $\calG$.   

Set $L=\lfloor \frac{n-er-c}{st+r}\rfloor$.  Let $\calN_L$, $A_L$, $\{\phi_i(\xbar;\abar): i\in [L]\}$ and $\{\bbar_{i,q}: i\in [L], q\in \mathbb{Q}\}$ be as in the conclusion of Lemma~\ref{lem:template}.  Let $B=\bigcup\{\cup\bbar_{i,j}:i\in[L], j\in [t]\}$.  Note that $|B|=Lst$, so $|A_L\cup B|\leq Lst+(L+e)r+c\leq n$.  Choose $X\subseteq N_L\setminus (A_L\cup B)$ so that $|A_L|+|B|+|X|=n$.  

Define $\calG:=\calN_L[A_L\cup B\cup X]$, and observe that $\calG\in \calH_n$.  For each $i\in [L]$, let $\theta_i(\xbar)$ be the formula $\phi_i(\xbar,\abar)\wedge\bigwedge_{x\in\xbar,a\in X\cup A_L} x\neq a$.  Note each $\theta_i(\xbar)$ has parameters from $A_L\cup X$, and $\calG\models\bigwedge_{j=1}^t \theta_i(\bbar_{i,j})$.  Further, for all $\dbar\in G^s$,  if $\calG\models\theta_i(\dbar)$, then there is $j\in [t]$ such that $\dbar$ is a permutation of $\bbar_{i,j}$.

Let $\theta(\xbar)=\bigvee_{i=1}^L\theta_i(\xbar)$.  Clearly $\calG\models \bigwedge_{i=1}^L\bigwedge_{j=1}^t\theta(\bbar_{ij})$.  Further, for all $\dbar\in G^s$, if $\calG\models \theta(\dbar)$, then $\dbar$ is the permutation of $\bbar_{i,j}$ for some $i\in [L]$ and $j\in [t]$.  We now give a procedure for constructing many distinct $\calL$-structures, each isomorphic to $\calG$.

\begin{enumerate}
\item Choose an equipartition $\calW=\{W_1,\ldots, W_{Lt}\}$ of $B$ into $Lt$ pieces, each of size $s$.  For each $i\in [Lt]$, let $\wbar_i$ denote the tuple enumerating $W_i$ in increasing order.
\item Choose an equipartition $\calQ=\{Q_1,\ldots, Q_L\}$ of $\calW$ into $L$ pieces, each of size $t$.  For each $1\leq i\leq L$, let $1\leq \alpha^i_1<\ldots<\alpha^i_t\leq Lt$ be such that $Q_i=\{W_{\alpha^i_1},\ldots, W_{\alpha^i_t}\}$. 
\item Let $f_{\calW,\calQ}:[n]\rightarrow [n]$ be the function which fixes $A_L\cup X$ pointwise, and such that, for each $1\leq i\leq L$ and $1\leq j\leq t$, $f_{\calW,\calQ}(\bbar_{ij})=\wbar_{\alpha^i_j}$. 
\item Let $\calG_{\calW,\calQ}$ be the $\calL$-structure with universe $[n]$ so that $f_{\calW,\calQ}:\calG\rightarrow\calG_{\calW,\calQ}$ is an $\calL$-isomorphism.
\end{enumerate}

We now show that if $\calW$ and $\calW'$ are distinct choices from step (1), then for any respective choices of $\calQ$ and $\calQ'$ in step (2), we have $\calG_{\calW,\calQ}\neq \calG_{\calW',\calQ'}$.  Indeed, let $\calW\neq \calW'$ be distinct equipartitions of $B$. Then there is some $W\in \calW$ with $W\notin \calW'$.  Let $\wbar$ enumerate $W$ in increasing order.  Then by construction, there are $i\in [L]$, $j\in [t]$, such that $\wbar=f_{\calW,\calQ}(\bbar_{ij})$, and thus $\calG_{\calW,\calQ}\models \theta(\wbar)$.  However, $W\notin \calW'$ implies that no permutation of $\wbar$ can be equal to $f_{\calW',\calQ'}(\bbar_{i'j'})$, for any $i'\in [L]$, $j'\in [t]$.  Consequently, $\calG_{\calW',\calQ'}\models \neg \theta(\wbar)$.  Thus $\calG_{\calW,\calQ}\neq \calG_{\calW',\calQ'}$.  

We now show that for any fixed choice of $\calW=\{W_1,\ldots, W_t\}$ from step (1), if $\calQ\neq \calQ'$ are distinct choices in step (2), then we have $\calG_{\calW,\calQ}\neq \calG_{\calW,\calQ'}$.  Indeed, suppose $\calQ=\{Q_1,\ldots, Q_t\}$ and $\calQ'=\{Q_1',\ldots, Q_t'\}$ are distinct equipartitions of $\calW$. Then there is some $1\leq i\neq i'\leq t$ for which $Q_i\cap Q_i'\neq \emptyset$, say $u\in Q_i\cap Q_i'$. Let $\wbar_u$ enumerate $W_u$ in increasing order.  By construction, $\wbar_u=f_{\calW,\calQ}(\bbar_{ij})=f_{\calW,\calQ'}(\bbar_{i'j'})$ for some $j,j'\in [t]$.   Thus $\calG_{\calW,\calQ}\models \phi_i(\wbar_u;\abar)$ while $\calG_{\calW, \calQ'}\models \phi_{i'}(\wbar_u;\abar)$.  Since $i\neq i'$, $\phi_i(\xbar;\abar)\vdash \neg\phi_{i'}(\xbar;\abar)$, so we must have $\calG_{\calW,\calQ}\neq \calG_{\calW,\calQ'}$.

Thus $|\calH_n|$ is at least the number of equipartitions of $[|B|]=[Lst]$ into $Lt$ pieces, times the number of equipartitions of $[Lt]$ into $L$ pieces.  By Lemma \ref{lem:lb} we obtain the following, where $n'=Lst$ (note $n'\gg s,r,e,c$).
\begin{align*}
|\calH_n|\geq (n')^{(1-1/s-o(1))n'}(n'/s)^{(1-1/t)(n'/s)}=(n')^{(1-1/ts-o(1))n'}&=n^{(1-1/ts-o(1))n'}\\
&=n^{(1-1/ts-o(1))\frac{tsn}{ts+r}}\\
&=n^{(\frac{ts}{ts+r}-\frac{1}{ts+r}-o(1))n}\\
&=n^{(1-\frac{r+1}{ts+r}-o(1))n}.
\end{align*}
We have shown that for all $t\geq 1$, $|\calH_n|\geq n^{(1-\frac{r+1}{ts+r}-o(1))n}$.  Consequently, $|\calH_n|\geq n^{(1-o(1))n}$. 
\end{proof}

 \section{Proof of Theorem \ref{thm:mainthm1} and Minimal Properties in Range 1}
 In this section we bring together what we have shown to prove Theorem \ref{thm:mainthm1}.  We then characterize the minimal properties of each speed in range 1.  
 
 \vspace{2mm}
 
  \noindent{\bf Proof of Theorem \ref{thm:mainthm1}.}
If $\calH$ is basic, then by Theorem \ref{thm:efthm} there are finitely many rational polynomials $p_1,\ldots, p_k$ such that for all sufficiently large $n$, $|\calH_n|=\sum_{i=1}^kp_i(x)i^n$, so case (1) holds.  So assume $\calH$ is not basic.  Observe that by definition, this implies $r\geq 2$.
 
If $\calH$ is also not mutually algebraic, then Theorem \ref{thm:malowerbound} implies $n^{n(1-o(1))}\leq |\calH_n|$ and case (3) holds.  We are left with the case when $\calH$ is mutually algebraic and not basic.  By Theorem \ref{thm:mamainthm1}, either $|\calH_n|\geq n^{n(1-o(1))}$ (so case (3) holds), or there is an integer $k\geq 2$ such that $|\calH_n|=n^{n(1-1/k-o(1))}$ (case (2) holds). 
 \qed
 \vspace{5mm}
 
 An immediate corollary of the proof of Theorem \ref{thm:mainthm1} is the converse of Theorem \ref{thm:efthm}.

\begin{Corollary}\label{cor:basic}
$\calH$ is basic if and only if there is $k\geq 1$ and rational polynomials $p_1,\ldots, p_k$ such that for sufficiently large $n$, $|\calH_n|=\sum_{i=1}^kp_i(x)i^n$.
\end{Corollary}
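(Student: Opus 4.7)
The forward direction is precisely the content of Theorem \ref{thm:efthm}, so only the converse requires argument. The plan is to deduce it from Theorem \ref{thm:mainthm1} by a simple growth-rate comparison: speeds of the form $\sum_{i=1}^k p_i(n) i^n$ grow at most singly exponentially in $n$, whereas the proof of Theorem \ref{thm:mainthm1} shows that any non-basic hereditary $\calL$-property already has speed at least $n^{n(1-1/k'-o(1))}$ for some $k' \geq 2$, which is $n^{\Omega(n)}$ and hence incompatible with the assumed form.

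More precisely, assume there exist $k \geq 1$ and rational polynomials $p_1,\ldots,p_k$ such that $|\calH_n| = \sum_{i=1}^k p_i(n) i^n$ for all sufficiently large $n$. Then there are constants $C,d$ so that $|\calH_n| \leq C n^d k^n$ for large $n$. Suppose for contradiction that $\calH$ is not basic. By the proof of Theorem \ref{thm:mainthm1}, non-basic $\calL$-properties split into two subcases: either $\calH$ is not mutually algebraic, in which case Proposition \ref{thm:malowerbound} gives $|\calH_n| \geq n^{n(1-o(1))}$; or $\calH$ is mutually algebraic and not basic, in which case Theorem \ref{thm:mamainthm1} yields either $|\calH_n| \geq n^{n(1-o(1))}$ or $|\calH_n| = n^{n(1 - 1/\ell - o(1))}$ for some integer $\ell \geq 2$. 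In every situation we obtain $|\calH_n| \geq n^{n(1/2 - o(1))}$, which contradicts the upper bound $C n^d k^n$ once $n$ is large enough. Hence $\calH$ is basic, finishing the converse direction.

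There is no real obstacle here beyond assembling the pieces: the machinery of Theorem \ref{thm:mainthm1} already produces the required growth dichotomy, and the converse reduces to the observation that the polynomial/exponential and factorial ranges are cleanly separated.
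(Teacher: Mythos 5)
Your proposal is correct and is exactly the argument the paper intends: the forward direction is Theorem \ref{thm:efthm}, and the converse follows because the case analysis in the proof of Theorem \ref{thm:mainthm1} shows every non-basic property has speed at least $n^{n(1-1/\ell-o(1))}$ for some $\ell\geq 2$, which is superexponential and so incompatible with a speed of the form $\sum_{i=1}^k p_i(n)i^n$. The paper states this corollary as "immediate" from that proof without writing out the details, and your write-up supplies precisely those details.
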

 
Now that we have Corollary \ref{cor:basic}, we can characterize the minimal properties of each speed in range (1).
 Suppose $\calH$ is a hereditary $\calL$-property.  A \emph{strict subproperty of $\calH$} is any hereditary $\calL$-property $\calH'$ satisfying $\calH'\subsetneq \calH$.  We say $\calH$ is \emph{polynomial} if asymptotically $|\calH_n|=p(n)$ for some rational polynomial $p(x)$.  In this case, the \emph{degree of $\calH$} is the degree of $p(x)$.  We say $\calH$ is \emph{exponential} if its speed is asymptotically equal to a sum of the form $\sum_{i=1}^{\ell}p_i(n)i^n$, where the $p_i$ are rational polynomials and $\ell \geq 2$.  In this case, the \emph{degree of $\calH$} is $\ell$.  A polynomial hereditary $\calL$-property $\calH$ is \emph{minimal} if every strict subproperty of $\calH$ is polynomial of strictly smaller degree.  An exponential hereditary $\calL$-property $\calH$ is \emph{minimal} if every strict subproperty of $\calH$ is either exponential of strictly smaller degree or polynomial.

\begin{Theorem}
Suppose $\calH$ is a non-trivial hereditary $\calL$-property.  
\begin{enumerate}
\item $\calH$ is a minimal polynomial property of degree $k\geq 0$ if and only if $\calH=age(\calM)$ for some countably infinite $\calL$-structure with one infinite $\sim$-class and exactly $k$ elements contained in finite $\sim$-classes.
\item $\calH$ is a minimal exponential property of degree $\ell \geq 2$ if and only if $\calH=age(\calM)$ for some countably infinite $\calL$-structure with $\ell$ infinite $\sim$-classes and no finite $\sim$-classes.
\end{enumerate}
\end{Theorem}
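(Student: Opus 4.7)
The overall approach leverages three ingredients already established: Corollary \ref{cor:basic} (speeds of the form $\sum p_i(n)i^n$ correspond exactly to basic properties), Corollary \ref{cor:polcount1} (any non-trivial basic $\calH$ decomposes as $\calF \cup \bigcup_{i=1}^m age(\calM_i)$ for countably infinite basic $\calM_i$, with the asymptotic speed controlled by the $\sim$-invariants $\ell_i$ and $m_i$), and Corollary \ref{cor:polcor} (the speed contribution of a single template). The plan is to use the first two corollaries to set up both directions and the last, together with a rigidity claim for ages, to run the minimality arguments.

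For the ``if'' direction of both (1) and (2), suppose $\calH = age(\calM)$ for the prescribed $\calM$. Applying Corollary \ref{cor:polcor} directly to $\calM$ bounds $|\calH_n|$ from below by a polynomial of degree $k$ in case (1), or by a sum $\sum_{i=1}^\ell p_i(n)i^n$ with $p_\ell\not\equiv 0$ in case (2). For the matching upper bound, apply Corollary \ref{cor:polcount1} to $\calH$ itself: each $\calM_j$ in the resulting decomposition embeds into some elementary extension $\calM' \succ \calM$, which inherits the full $\sim$-class structure of $\calM$ (since ``there are exactly $t$ $\sim$-classes of prescribed finite sizes'' is first-order expressible). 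The observation that $\sim^{\calM_j}$ is always coarser than $\sim^{\calM'}\!|_{M_j}$, immediate from Definition \ref{def:eq}, then forces $\ell_j \leq \ell$ and $m_j \leq k$ for every $j$. Minimality now reduces to a rigidity claim: if $\calN \in age(\calM)$ is countably infinite and shares $\calM$'s $\sim$-invariants, then $age(\calN) = age(\calM)$. The nontrivial containment $age(\calM) \subseteq age(\calN)$ follows by fixing a finite substructure $\calM[X]$, partitioning $X$ according to $\calM$'s $\sim$-classes, and using that each $\sim$-class is fully homogeneous (the transposition of two $\sim$-equivalent elements is an automorphism, so every permutation of a class extends to one) to build an isomorphic copy of $\calM[X]$ inside $\calN$, working inside a common elementary extension. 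Given rigidity, any strict subproperty $\calH' \subsetneq \calH$ must, by Corollary \ref{cor:polcount1}, have every template in its decomposition with strictly smaller $\sim$-invariants, which rules out degree $\geq k$ in case (1) and degree $\geq \ell$ in case (2).

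For the ``only if'' direction, assume $\calH$ is minimal of the appropriate form. Corollaries \ref{cor:basic} and \ref{cor:polcount1} yield $\calH = \calF \cup \bigcup age(\calM_i)$. In case (1), since $\calH$ is polynomial we have $\ell_i = 1$ for every countably infinite $\calM_i$, and the formula $\deg \calH = \max\{m_i\}$ produces some $\calM_i$ with exactly one infinite $\sim$-class and $k$ finite-class elements. The ``if'' direction shows $age(\calM_i)$ is polynomial of degree $k$, and since $age(\calM_i) \subseteq \calH$ has the same degree, minimality yields $\calH = age(\calM_i)$. Case (2) is parallel in outline: select some $\calM_i$ with $\ell_i = \ell$ and argue $\calH = age(\calM_i)$ via the ``if'' direction and minimality. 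The main obstacle I expect here is ensuring the representative has no finite $\sim$-classes, since nothing in the degree formula $\deg \calH = \max\{\ell_i\}$ alone forces $m_i = 0$. My plan is to pass from $\calM_i$ to a substructure obtained by removing all elements in finite $\sim$-classes; the delicate point is that this removal can coarsen $\sim$ and merge infinite classes, so one must argue using minimality of $\calH$ together with the homogeneity of $\sim$-classes that the resulting substructure retains $\ell$ distinct infinite $\sim$-classes and still generates the same age.
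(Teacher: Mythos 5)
Your outline tracks the paper's proof almost step for step: both directions reduce via Corollaries \ref{cor:basic} and \ref{cor:polcount1} to a single $age(\calM_i)$, your ``rigidity'' claim is exactly the paper's closing ``standard argument'' that a countably infinite member of $age(\calM)$ with matching $\sim$-invariants must be isomorphic to $\calM$ (and your coarsening and homogeneity observations are the right way to carry that out), and your ``only if'' direction is the paper's forward direction. A minor imprecision: your blanket claim ``$m_j\le k$ for every $j$'' in the ``if'' direction really uses $\ell=1$ --- when there are several infinite classes, one of them may meet $M_j$ in a finite nonempty set and inflate the count of elements in finite $\sim^{\calM_j}$-classes --- but since that bound is only needed in case (1), no harm is done.

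The step you flag in the ``only if'' direction of (2) --- arranging that the witness has no finite $\sim$-classes --- is a genuine gap, not merely a delicate point, and your proposal does not close it. Your plan (delete the elements in finite classes and argue the remainder still has $\ell$ infinite classes) is also the paper's plan: it asserts that $age(\calM_i')$ is a strict subproperty which is still exponential of degree $\ell$, contradicting minimality. But the merging you worry about genuinely occurs, and when it does the contradiction evaporates. Take $\calL=\{R(x,y)\}$ and $\calM$ with domain $\{d\}\cup A_1\cup A_2$, where $A_1,A_2$ are infinite and $R^{\calM}=\{(a,d):a\in A_1\}$. The $\sim$-classes are $\{d\}$, $A_1$, $A_2$, so by Corollary \ref{cor:polcount1} the property $age(\calM)$ is exponential of degree $2$ (explicitly $|age(\calM)_n|=n2^{n-1}-n+1$), yet deleting $\{d\}$ leaves an edgeless structure with a single $\sim$-class, whose age is polynomial. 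Moreover any strict subproperty of $age(\calM)$ that omits a finite structure of the form ``$d$ together with $j$ in-neighbours and $m$ further isolated vertices'' has speed $O(n^{\max\{j,m\}})$, so every strict subproperty is polynomial and $age(\calM)$ is minimal exponential of degree $2$ even though $\calM$ has a finite $\sim$-class. So no amount of additional care with the deletion argument will rescue this step: the ``no finite $\sim$-classes'' clause of part (2), and the corresponding step of the paper's proof, need to be repaired rather than elaborated. Everything else in your proposal is sound and coincides with the paper's argument.
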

\begin{proof} Fix a hereditary $\calL$-property $\calH$ which is polynomial of degree $k\geq 0$ (respectively exponential of degree $\ell\geq 2$) and minimal.  By Corollary \ref{cor:basic}, $\calH$ is basic. By Corollary \ref{cor:polcount1} there are finitely many countably infinite $\calL$-structures $\calM_1,\ldots, \calM_m$, each with finitely many $\sim$-classes, such that $\calH=\calF\cup \bigcup_{i=1}^m age(\calM_i)$, where $\calF$ is a trivial hereditary $\calL$-property.  Since $\calH$ is minimal, we may assume $\calF=\emptyset$ (since deleting $\calF$ does not change the asymptotic speed of $\calH$).  Further, since $age(\calM_i)$ is a basic hereditary $\calL$-property for each $i\in [m]$, Corollary \ref{cor:basic} and $\calH=\bigcup_{i=1}^m age(\calM_i)$ implies there is $1\leq i\leq m$ such that $age(\calM_i)$ is also polynomial of degree $k$ (respectively exponential of degree $\ell$).  By  minimality, $\calH=age(\calM_i)$.  Since $\calH=age(\calM_i)$ is polynomial of degree $k$ (respectively exponential of degree $\ell\geq 2$), Corollary \ref{cor:polcount1} implies $\calM_i$ has one infinite $\sim$-class and exactly $k$ elements in finite $\sim$-classes (respectively $\ell$ infinite $\sim$-classes).  We now show further, that in the case when $\calH$ is exponential of degree $\ell \geq 2$, $\calM_i$ has no finite $\sim$-classes.  Indeed, suppose it did.  Let $\calM_i'$ be the substructure of $\calM_i$ obtained by deleting the finite $\sim$-classes. Then $age(\calM_i')$ is a strict subproperty of $\calH$ which is exponential of degree $\ell$ by Corollary \ref{cor:polcor}, a contradiction. This takes care of the forward directions of both (1) and (2).

Suppose for the converse that $\calM$ is a countably infinite $\calL$-structure with one infinite $\sim$-class and exactly $k\geq 0$ elements contained in a finite $\sim$-classes (respectively with $\ell$ infinite $\sim$-classes and no finite $\sim$-classes).  By Corollary \ref{cor:polcount1}, $age(\calM)$ is polynomial of degree $k$ (respectively exponential of degree $\ell$).  Suppose by contradiction there is is a strict subproperty $\calH'$ of $age(\calM)$ which is also polynomial of degree $k$ (respectively exponential of degree $\ell$).  

Corollary \ref{cor:polcount1} implies there are finitely many countably infinite $\calL$-structures $\calM_1,\ldots, \calM_m$, each with finitely many $\sim$-classes, such that $\calH'=\calF\cup \bigcup_{i=1}^m age(\calM_i)$, where $\calF$ is a trivial hereditary $\calL$-property.  Again, there must be some $i\in [m]$ so that $age(\calM_i)$ is itself polynomial of degree $k$ (respectively exponential of degree $\ell$).  By Corollary \ref{cor:polcount1}, $age(\calM_i)$ has one infinite $\sim$-class and exactly $k$ elements contained in a finite $\sim$-classes (respectively with $\ell$ infinite $\sim$-classes an no finite ones).  It is straightforward to check that because $age(\calM_i)\subseteq age(\calM)$, we must have that $\calM_i\models Th_{\forall}(\calM)$.  

Since $\calM_i\models Th_{\forall}(\calM)$, by fact (2) from the end of Section 1.1, there is some $\calM'\models Th(\calM)$ with $\calM_i\subseteq \calM'$.  By downward L\"owenheim-Skolem, there is a countable $\calM''$ with $\calM_i\subseteq \calM''\prec \calM'$.  Our assumptions on the structure of $\calM$ imply that $Th(\calM)$ is countably categorical, and thus $\calM''$ is isomorphic to $\calM$.  Consequently, we have shown $\calM$ has a substructure isomorphic to $\calM_i$.  This along with what we have shown about the structure of $\calM$ and $\calM_i$ imply that $\calM_i$ must in fact be isomorphic to $\calM$, contradicting that $age(\calM_i)\subsetneq age(\calM)$.

\end{proof}

\section{penultimate range}\label{sec:oscillate}

In this section we show that for each $r\geq 2$ there is a hereditary property of $r$-uniform hypergraphs whose speed oscillates between speeds close the lower and upper bounds of the penultimate range (case (3) of Theorem \ref{thm:mainthm}).  Our example is a straightforward generalization of one used in the graph case (see \cite{BBW2}). We include the full proofs for completeness. 

Throughout this section $r\geq 2$ is an integer and $\calG$ is the class of finite $r$-uniform hypergraphs.  We will use different notational conventions in this section, as it requires no logic.  For this section, a \emph{property} $\calP$ means a class of finite $r$-uniform hypergraphs closed under isomorphism. The \emph{speed} of a property $\calP$ is the function $n\mapsto |\calP_n|$.  We denote elements of $\calG$ as pairs $G=(V,E)$ where $V$ is the set of vertices of $G$ and $E\subseteq {V\choose r}$ is the set of edges. In this notation, we let $v(G)=|V|$ and $e(G)=|E|$. Given $U\subseteq V$, $G[U]$ is the hypergraph $(U,E\cap {U\choose r})$.  Given a hypergraph $H=(U, E')$, we write $H\subseteq G$ if $H$ is an induced subgraph of $G$, i.e. if $U\subseteq V$ and $H=G[U]$.  We begin by defining properties which we will use throughout the section.

\begin{Definition}
For $c\in \mathbb{R}^{\geq0}$, define
\begin{align*}
\calS^c=\{G\in \calG : e(G)\leq cv(G)\}\text{ and }
\calQ^c=\{G\in \calG : H\in \calS^c\text{ for all }H\subseteq G\}.
\end{align*}
\end{Definition}

Suppose $G=(V,E)$ is a finite $r$-uniform hypergraph. The \emph{density of $G$} is $\rho(G)=e(G)/v(G)$, and we say $G$ is \emph{strictly balanced} if for all $V'\subsetneq V$, $\rho(G[V'])<\rho(G)$.  The following theorem, proved by Matushkin in \cite{matushkin}, is a generalization to hypergraphs of results about strictly balanced graphs (see  \cite{RR, JGT:JGT3190100214}).

\begin{Theorem}[Matushkin \cite{matushkin}]\label{matushkin}
Suppose $r\geq 2$ is an integer and $c \in \mathbb{Q}^{\geq 0}$.  There exists a strictly balanced $r$-uniform hypergraph with density $c$ if an only if $c \geq \frac{1}{r-1}$ or $c=\frac{k}{1+k(r-1)}$ for some integer $k\geq 1$.
\end{Theorem}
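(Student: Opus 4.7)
The plan is to prove the two directions separately. For the sufficiency direction, I would construct explicit strictly balanced $r$-uniform hypergraphs of each prescribed density. For $c = k/(1+k(r-1))$ with $k \geq 1$, the natural example is the $r$-uniform sunflower $S_k$: a single central vertex $v_0$ together with $k$ edges, each consisting of $v_0$ and $r-1$ fresh (disjoint across edges) petal vertices. Then $v(S_k) = 1 + k(r-1)$ and $e(S_k) = k$, so $\rho(S_k) = c$, and strict balance follows by a direct verification: any proper vertex subset either destroys at least one entire edge, dropping $e$ by at least $1$ while dropping $v$ by at most $r-1$, or retains the center but deletes some petals, and in either case the arithmetic yields a strictly smaller density. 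For the continuum range $c \geq 1/(r-1)$ with $c = p/q$ rational, I would build examples either by gluing sunflower-like blocks of slightly higher density together with carefully chosen pendant attachments that drive density down to $c$, or via a probabilistic construction on $n$ vertices with $e = \lceil cn \rceil$ random edges, verifying that strict balance holds with positive probability for $n$ large after handling small sub-hypergraphs separately.

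For the necessity direction, given a strictly balanced $G = (V,E)$ with $c = e(G)/v(G) < 1/(r-1)$, the goal is to force $c = k/(1+k(r-1))$. Strict balance, rewritten by complementation, says that for every nonempty $S \subsetneq V$ the number of edges meeting $S$ strictly exceeds $c|S|$. Applied to singletons this gives $\deg(v) > c$ for all $v$, and since $c < 1/(r-1) \leq 1$, every vertex has degree at least $1$. Next, $G$ must be connected: any decomposition $G = G_1 \sqcup G_2$ forces one component to have density $\geq c$, contradicting strict balance for the vertex set of the other component. A standard structural bound then gives that any connected $r$-uniform hypergraph with $e$ edges satisfies $v \leq 1 + e(r-1)$, with equality exactly when $G$ is an $r$-uniform hypertree (easy induction on $e$: each new edge attached shares at least one vertex with the existing structure, adding at most $r-1$ new ones). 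Combining, $c \geq e/(1 + e(r-1))$, with equality precisely when $G$ is a hypertree, giving exactly $c = k/(1+k(r-1))$ with $k = e(G)$.

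The crux, and the main obstacle, is ruling out non-hypertree (i.e., connected but cyclic) strictly balanced hypergraphs with $c < 1/(r-1)$. For graphs $(r=2)$ this step is immediate: every cycle subgraph has density $1 = 1/(r-1)$, so strict balance with $c < 1$ forbids any cycle and forces $G$ to be a tree. For $r \geq 3$ the notion of a cycle in a hypergraph is genuinely more intricate (Berge, loose, tight and related variants), and the sharp bound I need is that every minimally cyclic $r$-uniform hypergraph has density at least $1/(r-1)$. I would attack this by a unified argument on the incidence bipartite graph of $G$, using an ear-decomposition of its $2$-edge-connected blocks to isolate a minimal cycle and to verify the density bound via a careful edge-vertex count. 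This is the technical heart of the argument; once established, $G$ must be a hypertree, $v = 1 + e(r-1)$, and $c = e/(1+e(r-1))$, completing the classification.
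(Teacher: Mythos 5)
First, a point of reference: the paper does not prove this statement --- it is quoted from Matushkin's paper as an external result --- so there is no internal proof to compare against, and your proposal must stand on its own. Judged that way, your necessity direction is essentially complete and is in fact easier than you believe. The step you single out as ``the technical heart'' --- ruling out connected, non-hypertree strictly balanced hypergraphs of density below $\frac{1}{r-1}$ --- is a one-line consequence of the bound you already wrote down. If $G$ is connected, has no isolated vertices, and is not a hypertree, then $v(G)\leq e(G)(r-1)$ (since $v\leq 1+e(r-1)$ holds with equality only for hypertrees and $v$ is an integer), whence $\rho(G)=e/v\geq \frac{1}{r-1}$ outright. No lemma about densities of minimal hypergraph cycles, and no ear decomposition of the incidence bipartite graph, is needed: the hypothesis $c<\frac{1}{r-1}$ directly forces $v=1+e(r-1)$ and hence $c=e/(1+e(r-1))$. (You should also record explicitly that strict balance forbids isolated vertices --- deleting one strictly increases density --- which is what licenses the count $v\le r+(e-1)(r-1)$.) The sunflower verification for $c=k/(1+k(r-1))$ is correct.

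The genuine gap is the sufficiency direction for arbitrary rational $c\geq\frac{1}{r-1}$, which is the substantive content of Matushkin's theorem (it generalizes the Ruci\'nski--Vince theorem for graphs, itself a nontrivial explicit construction). Neither of your two proposed routes is carried out, and the probabilistic one fails as stated: a uniformly random $r$-uniform hypergraph on $n$ vertices with $\lceil cn\rceil$ edges has, with high probability, isolated vertices (a fixed vertex is isolated with probability roughly $e^{-cr}$, so their expected number is linear in $n$), and deleting an isolated vertex strictly increases the density of the remaining induced subhypergraph, so such a hypergraph is not strictly balanced w.h.p. More generally, strict balance applied to $V\setminus\{v\}$ forces every degree to exceed $c$, a condition the sparse uniform model violates except on an event of exponentially small probability, against which the union bound over small dense subsets no longer closes without substantial extra work. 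The gluing route is only a phrase; producing, for every rational $c=p/q\geq\frac{1}{r-1}$, a specific hypergraph of density exactly $c$ all of whose proper induced subhypergraphs are strictly sparser is exactly the hard part of the theorem, and your proposal does not contain it.
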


Given an vertex set $V$ and a partition $\calP=\{P_1,\ldots, P_k\}$ of $V$, an \emph{$r$-matching compatible with $\calP$} is a set $E\subseteq {V\choose r}$ such that for every $e\in E$ and $1\leq i\leq k$, $|e\cap P_i|\leq 1$, and for every $e\neq e'\in E$, $e\cap e'=\emptyset$.  We say $\calP$ is an \emph{equipartition} of $V$ if $||P_i|-|P_j||\leq 1$ for all $1\leq i,j\leq k$.  The following is a straightforward generalization of Theorem 16 in  \cite{ZitoSch}.
 
\begin{Proposition}\label{thm:genQ}
For any constant $c\geq \frac{1}{r-1}$, $|\calS_n^c|=n^{(r-1)(c+o(1))n}=|\calQ^c_n|$. 
\end{Proposition}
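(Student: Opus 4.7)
The plan is to split the proof into upper and lower bound arguments. For the upper bound, I would first observe that any $G\in \calS^c_n$ has at most $\lfloor cn\rfloor$ edges, whence
\[
|\calS^c_n|\le \sum_{m=0}^{\lfloor cn\rfloor}\binom{\binom{n}{r}}{m}.
\]
A routine estimate using $\binom{\binom{n}{r}}{m}\le (e\binom{n}{r}/m)^m$, together with the fact that the summand is maximized near $m=\lfloor cn\rfloor$, yields $|\calS^c_n|\le n^{(r-1)(c+o(1))n}$. Since $\calQ^c\subseteq \calS^c$ directly from the definitions, the same upper bound applies to $|\calQ^c_n|$. The matching lower bound on $|\calS^c_n|$ is immediate because any choice of $\lfloor cn\rfloor$ edges on $[n]$ already lies in $\calS^c$; thus $|\calS^c_n|\ge \binom{\binom{n}{r}}{\lfloor cn\rfloor}=n^{(r-1)(c-o(1))n}$ by Stirling.

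The delicate part is the lower bound $|\calQ^c_n|\ge n^{(r-1)(c-o(1))n}$. Using the monotonicity $\calQ^{c'}\subseteq \calQ^c$ for $c'\le c$, I would reduce to the case of rational $c\ge 1/(r-1)$ and handle irrational $c$ by approximation from below. For such rational $c$, Theorem \ref{matushkin} supplies a strictly balanced $r$-uniform hypergraph $H$ with density exactly $c$. The most obvious construction --- vertex-disjoint unions of copies of $H$ on $[n]$ --- produces only $n^{n(1-1/v(H))(1-o(1))}$ hypergraphs in $\calQ^c$, which is insufficient as soon as $(r-1)c>1-1/v(H)$, i.e. for most $c>1/(r-1)$ with $v(H)$ bounded. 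The richer family I would use is degree-based: for integer $rc$, the $rc$-regular $r$-uniform hypergraphs on $[n]$ all lie in $\calQ^c$, since the degree bound forces $r\cdot e(G[V'])\le rc|V'|$, and hence $e(G[V'])/|V'|\le c$ for every $V'\subseteq [n]$. A standard configuration-model computation then produces $n^{(r-1)cn(1-o(1))}$ such hypergraphs.

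The principal obstacle is the case of non-integer $rc$, where the coarse inclusion $\calQ^{\lfloor rc\rfloor/r}\subseteq \calQ^c$ loses a constant (rather than $o(1)$) factor in the exponent. To remedy this, I would pass to $r$-uniform hypergraphs whose vertex degrees lie in $\{\lfloor rc\rfloor, \lceil rc\rceil\}$, with the proportion $rc-\lfloor rc\rfloor$ of high-degree vertices distributed so that no induced sub-hypergraph exceeds density $c$. Arranging this distribution is the main technical point; a probabilistic placement combined with a union bound over "bad" subsets (those of density above $c$) should show that many such mixed-degree hypergraphs exist, with the count still of order $n^{(r-1)cn(1-o(1))}$. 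Combining this with the rational approximation of irrational $c$ then completes the lower bound.
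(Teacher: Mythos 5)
Your upper bound, the lower bound for $\calS^c_n$, the reduction of irrational $c$ to rational $c$ by monotonicity, and the observation that disjoint copies of a single strictly balanced $H$ are too few, are all correct, and the integer-$rc$ case is sound in principle: maximum degree at most $rc$ does force $e(G[V'])\le c|V'|$ for every $V'$, and the enumeration of $rc$-regular $r$-uniform hypergraphs is a citable (though not trivial) fact. The genuine gap is the non-integer $rc$ case, which you correctly identify as the principal obstacle but then only sketch. This case is not marginal: it contains the boundary value $c=\tfrac{1}{r-1}$ for every $r\ge 3$ (where $rc=\tfrac{r}{r-1}$ is not an integer), and rounding down to $\lfloor rc\rfloor/r$ loses a constant factor $\tfrac{1}{r}$ in the exponent, so the entire burden of the proposition for most $c$ rests on the step you leave open. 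The proposed remedy --- random placement of mixed degrees plus a union bound over subsets of density exceeding $c$ --- runs into two concrete difficulties. First, $\calQ^c$ is a condition on \emph{all} $2^n$ subsets, so the union bound over subsets of linear size $\beta n$ requires failure probabilities below $e^{-H(\beta)n}$; a first-moment/Poisson-tail estimate for the event $e(G[V'])>c\beta n$ gives roughly $(e\beta^{r-1})^{c\beta n}$, and the resulting condition $(1-c(r-1))\ln(1/\beta)+c<0$ fails at the critical density $c=\tfrac{1}{r-1}$. Second, bounded-size dense configurations (e.g.\ a few edges with large pairwise intersections) occur with probability that is only polynomially, not exponentially, small, so they must be handled by a separate deletion or conditioning argument. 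Neither issue is fatal in the sense that the statement is false, but neither is resolved by the argument as written, so the proof is incomplete exactly where it matters.

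For comparison, the paper avoids randomness entirely and uses Theorem \ref{matushkin} for \emph{all} rational $c\ge\tfrac{1}{r-1}$: fix a strictly balanced $H$ on $[t]$ of density exactly $c$, take an equipartition $W_1,\dots,W_t$ of $[n]$, and replace each edge $e$ of $H$ by an arbitrary maximal matching $E_e$ transverse to the parts $\{W_x:x\in e\}$. Membership of the union in $\calQ^c$ is checked deterministically by a layering trick: for $X\subseteq[n]$ with $n_i=|X\cap W_i|$, writing $V_\ell=\{i:n_i\ge\ell\}$ gives $|X|=\sum_\ell v(H[V_\ell])$ and $e(G[X])=\sum_\ell e(H[V_\ell])$, and strict balancedness of $H$ bounds each summand. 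The count $\bigl(\lfloor n/t\rfloor!\bigr)^{(r-1)ct}=n^{(r-1)cn-o(n)}$ then comes from the free choice of the $ct$ matchings. If you want to salvage your approach, you would need either to carry out the second-moment/deletion analysis for the mixed-degree random model in full, or to replace it with a deterministic construction of this blow-up type.
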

\begin{proof}
For the upper bound bound, note that by definition, for large $n$, both $|\calQ_n^c|$ and $|\calS_n^c|$ are bounded above by the following.
\begin{align*}
\sum_{j=0}^{\lfloor cn\rfloor}{{n\choose r}\choose j}\leq cn{n^r\choose cn}\leq (cn+1)\Big(\frac{n^re}{cn}\Big)^{cn} =n^{(r-1)cn(1+o(1))}.
\end{align*}
For the lower bound, it suffices to show $|\calQ_n^c|\geq n^{(r-1)cn-o(n)}$.  Assume first $c$ is rational. Then by Theorem \ref{matushkin}, we can choose a finite, strictly balanced $r$-uniform hypergraph $H=(V,E)$ with density $c$. Without loss of generality, say $V=[t]$ for some $t\in \mathbb{N}^{>0}$.  Note $|E|=ct$ and $H\in \calQ^c$.  Suppose $n\gg t$ is sufficiently large, and choose an equipartition $\calP=\{W_1,\ldots, W_t\}$ of $[n]$.   For each $e\in E$, choose $E_e$ to be a maximal matching in $[n]$ compatible with $\calP$ and satisfying $E_e\subseteq \bigcup_{x\in e}W_x$.  Note the number of ways to choose $E_e$ is at least $(\lfloor n/t\rfloor !)^{r-1}$.

We claim $G:=([n],\bigcup_{e\in E}E_{e})\in \calQ^c_n$.  Fix $X\subseteq [n]$.  We show $e(G[X])\leq c|X|$.  For each $1\leq i\leq t$, let $X_i=X\cap W_i$ and $n_i=|X_i|$.  For each $1\leq u\leq n$, let $V_{u}=\{i\in [t]: n_i\geq u\}$, and let $H_u=H[V_{u}]$. Let $\ell=\max\{u: V_u\neq \emptyset\}$, and observe that $\ell\leq \lceil n/t\rceil$.  Observe that $|X|=\sum_{u=1}^{\ell}v(H_u)$, since 
$$
|X|=\sum_{u=1}^{\ell}u|V_u\setminus V_{u+1}|=\sum_{u=1}^{\ell}u(|V_u|-|V_{u+1}|)=|V_1|-|V_{\ell+1}|+\sum_{u=2}^{\ell}(u-(u-1))|V_u|=\sum_{u=1}^{\ell}v(H_u),
$$
 where the last equality uses that $V_{\ell+1}=\emptyset$. We now show that $e(G[X])=\sum_{u=1}^{\ell}e(H_u)$.  For each $1\leq u\leq \ell$, let $V_u'=V_u\setminus V_{u+1}$, and given $1\leq i_1,\ldots, i_r\leq \ell$, let 
 $$
 e(V_{i_1}',\ldots, V'_{i_r})=|\{\{a_1,\ldots, a_r\}\in E: a_1\in V_{i_1}',\ldots, a_r\in V_{i_r}'\}|.
 $$
 Then by construction,
 \begin{align*}
e(G[X])=\sum_{1\leq i_1\leq \ldots\leq i_r\leq \ell_X}i_1e(V'_{i_1},\ldots, V_{i_r}').
\end{align*}
On the other hand, for each $1\leq u\leq \ell$, $e(H_u)=\sum_{u\leq i_1\leq \ldots \leq i_r\leq \ell}e(V_{i_1}',\ldots,V_{i_r}')$, so 
$$
\sum_{u=1}^{\ell}e(H_u)=\sum_{u=1}^{\ell}\sum_{u\leq i_1\leq \ldots \leq i_r\leq \ell}e(V_{i_1}',\ldots,V_{i_r}')=\sum_{1\leq i_1\leq \ldots\leq i_r\leq \ell}i_1e(V'_{i_1},\ldots, V_{i_r}').
$$
Thus $e(G[X])=\sum_{u=1}^{\ell}e(H_u)$. Since $H$ is strictly balanced of density $c$, we know that for each $u\in [\ell]$, $e(H_u)\leq cv(H_u)$.  Combining these observations yields that
$$
e(G[X])=\sum_{u=\ell}^ne(H_u)\leq c\sum_{i=1}^\ell v(H_u)=c|X|.
$$
Thus $G\in \calQ_n^c$.  Clearly distinct choices for the set $\{E_e: e\in E\}$ yield distinct elements $([n], \bigcup_{e\in E}E_e)$ of $\calQ_n^c$. Since for each $e\in E$, the number of ways to choose $E_e$ is at least $(\lfloor n/t\rfloor!)^{r-1}$, this shows that 
$$
|\calQ^c_n|\geq \Big(\Big(\lfloor n/t\rfloor!\Big)^{r-1}\Big)^{|E|}=\Big(\lfloor n/t\rfloor!\Big)^{(r-1)ct}=n^{(r-1)cn-o(n)}.
$$

Assume now $c$ is irrational. Note that for all $c'\leq c$, $\calQ^{c'}\subseteq \calQ^c$. Thus by the calculations above, for all $\frac{1}{r-1}\leq c'< c$, where $c'\in \mathbb{Q}$, we have $|\calQ^c_n|\geq n^{(r-1)(c'-o(1))n}$. Clearly this implies $|\calQ^c_n|\geq n^{(r-1)(c-o(1))n}$.
\end{proof}
\begin{Definition}
Given an increasing (possibly finite) sequence $\nu=(\nu_1,\nu_2,\ldots )$ of natural numbers, let 
$$
\calP^{\nu,c}=\{G\in \calG: \text{ if }H\subseteq G\text{ and $v(H)=\nu_i$ for some $i$, then }e(H)\leq c\nu_i\}.
$$
\end{Definition}

\begin{Lemma}\label{lem:Lemma9}
Let $c\geq \frac{1}{r-1}$, $\epsilon>1/c$, and $\nu=(\nu_i)_{i\in \mathbb{N}}$ a sequence of natural numbers with $k=\sup\{\nu_i: i\in \mathbb{N}\}\in \mathbb{N}\cup \{\infty\}$.  Then
\begin{enumerate}
\item $|\calP^{\nu,c}_n|\geq n^{(r-1)(c+o(1))n}$ and $|\calP^{\nu,c}_n|= n^{(r-1)(c+o(1))n}$ whenever $n=\nu_i$ for some $i\in \mathbb{N}$,
\item if $k< \infty$ and $n$ is sufficiently large, then $|\calP_{\nu,c}^n|\geq 2^{n^{r-\epsilon}}$.
\end{enumerate}
\end{Lemma}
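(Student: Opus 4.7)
The plan is to handle (1) and (2) separately. Part (1) follows immediately from Proposition \ref{thm:genQ} via suitable inclusions, while part (2) requires a probabilistic construction of a single ``host'' hypergraph whose spanning subhypergraphs all remain in $\calP^{\nu,c}$.

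For (1), the lower bound comes from the trivial inclusion $\calQ^c \subseteq \calP^{\nu,c}$: if $G \in \calQ^c$ then $e(H) \leq c\,v(H)$ for every $H \subseteq G$, which in particular covers the case $v(H) = \nu_i$. Thus $|\calP^{\nu,c}_n| \geq |\calQ^c_n| = n^{(r-1)(c+o(1))n}$ by Proposition \ref{thm:genQ}. For the matching upper bound when $n = \nu_i$, any $G \in \calP^{\nu,c}_n$ is itself a subhypergraph of size $\nu_i$, which forces $e(G) \leq cn$ and hence $G \in \calS^c_n$; Proposition \ref{thm:genQ} again gives $|\calS^c_n| = n^{(r-1)(c+o(1))n}$, closing the sandwich.

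For (2), since $k = \sup_i \nu_i < \infty$ and $\nu$ is increasing, only finitely many sizes $\nu_1 < \nu_2 < \cdots < \nu_m = k$ occur. The strategy is to build a hypergraph $H$ on $[n]$ with $e(H) \geq n^{r-\epsilon}$ such that $e(H[S]) \leq c\nu_i$ for every $i$ and every $\nu_i$-set $S$. Because $\calP^{\nu,c}$ is closed under taking subhypergraphs, every one of the $2^{e(H)}$ subsets of $E(H)$ gives a distinct labeled hypergraph in $\calP^{\nu,c}_n$, so $|\calP^{\nu,c}_n| \geq 2^{n^{r-\epsilon}}$. I would find $H$ by the probabilistic method: choose $\epsilon'$ with $k/(ck+1) < \epsilon' < \epsilon$ (possible because $k/(ck+1) < 1/c < \epsilon$), and let $G \sim G^r(n,p)$ with $p = n^{-\epsilon'}$. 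Then $\mathbb{E}[e(G)] = p\binom{n}{r} = \Theta(n^{r-\epsilon'})$, and a standard Chernoff bound gives $e(G) \geq n^{r-\epsilon}$ with probability $1 - o(1)$. For each $\nu_i \geq r$, the expected number of $\nu_i$-subsets $S$ with $e(G[S]) > c\nu_i$ is at most
\[
\binom{n}{\nu_i}\binom{\binom{\nu_i}{r}}{\lceil c\nu_i \rceil + 1} p^{\lceil c\nu_i\rceil + 1} = O\bigl(n^{\nu_i - \epsilon'(c\nu_i + 1)}\bigr) = o(1),
\]
since $\nu \mapsto \nu/(c\nu+1)$ is increasing, so $\nu_i/(c\nu_i+1) \leq k/(ck+1) < \epsilon'$; subsets with $\nu_i < r$ support no edges at all. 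A union bound over the finitely many sizes $\nu_i$ then produces an $H$ with both required properties.

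The main obstacle is calibrating $\epsilon'$ so that edges are plentiful (expected count beats $n^{r-\epsilon}$) while bad substructures vanish in expectation. The hypothesis $\epsilon > 1/c$ is exactly what opens up the window $(k/(ck+1),\,\epsilon)$ from which $\epsilon'$ can be drawn, and its strictness is essential: without it, for some $\nu_i$ near $k$ the bad-subset expectation would fail to tend to $0$, and the union bound would collapse.
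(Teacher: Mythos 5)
Your part (1) is exactly the paper's argument: the inclusion $\calQ^c\subseteq\calP^{\nu,c}$ gives the lower bound via Proposition \ref{thm:genQ}, and when $n=\nu_i$ the inclusion $\calP^{\nu,c}_n\subseteq\calS^c_n$ closes the sandwich. Your part (2) is also the paper's route: take $G_{n,p}$ with $p=n^{-\epsilon'}$, use a first-moment union bound over the (finitely many, since $k<\infty$) relevant small vertex sets to find one host hypergraph $G$ with $e(G)\geq n^{r-\epsilon}$ whose small induced subhypergraphs are all sparse, and then count the $2^{e(G)}$ edge-subsets, each of which stays in $\calP^{\nu,c}$. So the comparison comes down to your calibration of $\epsilon'$, and there is a genuine slip there.

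A $\nu_i$-set $S$ is bad when $e(G[S])>c\nu_i$, i.e.\ when $e(G[S])\geq m_i:=\lfloor c\nu_i\rfloor+1$; your first moment only counts sets with at least $\lceil c\nu_i\rceil+1$ edges, which misses the bad sets with exactly $\lceil c\nu_i\rceil$ edges whenever $c\nu_i\notin\mathbb{Z}$. With the correct threshold the union bound needs $\epsilon'>\nu_i/m_i$ for every $i$, and your window $(k/(ck+1),\epsilon)$ is too wide for that. Concretely, take $r=2$, $c=3/2$, $\nu$ containing $9$ with $k=10$, and $\epsilon=0.7>1/c$: your window permits $\epsilon'=0.63>10/16$, but a $9$-set is bad as soon as it has $14$ edges, and the expected number of bad $9$-sets is $\Theta\bigl(n^{9-14\epsilon'}\bigr)=\Theta\bigl(n^{0.18}\bigr)\to\infty$, so the step fails. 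The repair is exactly the paper's choice of parameter: since $m_i>c\nu_i$ one always has $\nu_i/m_i<1/c$, so any $\epsilon'\in(1/c,\epsilon)$ (nonempty by the hypothesis $\epsilon>1/c$) gives $\binom{n}{\nu_i}p^{m_i}\leq n^{\nu_i(1-\epsilon'c)}\to 0$ for every $i$, while still keeping $\mathbb{E}[e(G)]=\Theta\bigl(n^{r-\epsilon'}\bigr)\gg n^{r-\epsilon}$. With that one change your argument coincides with the paper's.
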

\begin{proof}
Note $\calQ^c\subseteq \calP^{c,\nu}$ so by Proposition \ref{thm:genQ}, $|\calP^{c,\nu}_n|\geq |\calQ^c_n| \geq n^{(r-1)(c+o(1))n}$.  When $n=\nu_i$ for some $i\in \mathbb{N}$, then by definition, $\calQ_n^c\subseteq \calP^{\nu,c}_n\subseteq \calS_n^c$. Consequently Proposition \ref{thm:genQ} implies $|\calP^{c,\nu}_{n}|=n^{(c(r-1)+o(1))n}$. This shows (1) holds.

Assume now $k<\infty$ and let $(k)=(1,\ldots, k)$. Note $\calP^{(k),c}\subseteq \calP^{\nu,c}$, so it suffices to show that for large enough $n$, $|\calP^{(k),c}_n|\geq 2^{n^{2-\epsilon}}$.  Choose $\delta$ satisfying $\epsilon>\delta>1/c$ and let $p=n^{-\delta}$.  Recall that $G_{n,p}$ is the random $r$-uniform hypergraph on vertex set $[n]$, with edge probability $p$.  We consider the probability $G_{n,p}\notin \calP^{(k),c}_n$, i.e. the probability that there is $H\subseteq G_{n,p}$ with $v(H)\leq k$ and $e(H)>cv(H)$.  Given $1\leq j\leq k$ and $S\in {[n]\choose j}$, let $X_S:G_{n,p}\rightarrow \mathbb{N}$ be the random variable defined by $X_S(G)=1$ if $e(G[S])>cj$.  Note $\mathbb{P}(X_S= 1)\leq \sum_{i=\lceil cj\rceil}^{j\choose r}{{j\choose r}\choose i}p^i(1-p)^{{j\choose r}-i}\leq C_jp^{cj}$ for some constant $C_j$ depending only on $j$, $r$ and $c$.  Therefore we have the following.
\begin{align*}
\mathbb{P}(G_{n,p}\notin \calP^{(k),c}_n)&\leq \sum_{j=1}^k\sum_{S\in {[n]\choose j}}\mathbb{P}(X_S= 1)\leq \sum_{j=1}^k{n\choose j}C_jp^{cj}\leq  \sum_{i=1}^k C_j(n^{1-\delta c})^j.
\end{align*}
Since $\delta c>1$, we have $1-\delta c<0$ and thus $\mathbb{P}(G_{n,p}\notin \calP^{(k),c}_n)\rightarrow 0$ as $n\rightarrow \infty$.  Thus we may choose $n_0$ so that $\mathbb{P}(G_{n,p}\notin \calP^{(k),c}_{n})<1/3$ for all $n\geq n_0$.  

Fix any $0<\epsilon_0<1/6$. If $n$ is sufficiently large, $\mathbb{P}(e(G_{n,p})< pN/2)\leq 1/2+\epsilon_0$, where $N={n\choose r}$.  Combining this with the above, we have shown that for all sufficiently large $n$, 
$$
\mathbb{P}(G_{n,p}\in \calP_n^{(k),c}\text{ and }e(G_{n,p})\geq pN/2)\geq 1/2-\epsilon_0-1/3>0.
$$
Thus for all sufficiently large $n$, there exists $G=([n],E)\in \calP_n^{(k),c}$ such that $e(G)\geq pN/2$.  Since $([n],E')\in \calP_n^{(k),c}$ for all $E'\subseteq E$, we have that
$$
|\calP^{(k),c}_{n}|\geq 2^{pN/2} =2^{n^{-\delta}{n\choose r}/2}\geq 2^{n^{r-\epsilon}},
$$
where the last inequality is because $n$ is sufficiently large, and $\epsilon<\delta$. 
\end{proof}


We now show that for any $c\geq 1/(r-1)$ and $\epsilon >1/c$, there is a property whose speed oscillates between $n^{nc(r-1)(1-o(1))}$ and $2^{n^{r-\epsilon}}$.

\begin{Theorem}\label{thm:oscilate}
Let $c\geq \frac{1}{r-1}$ and $\epsilon>1/c$. There exists sequences $\nu=(\nu_i)_{i\in \mathbb{N}}$ and $\mu=(\mu_i)_{i\in \mathbb{N}}$ where $\mu_i=\nu_i-1$ for all $i\in \mathbb{N}$ such that the following hold.
\begin{enumerate}
\item $|\calP^{\nu,c}_n|=n^{(r-1)(c+o(1))n}$ whenever $n=\nu_i$,
\item $|\calP^{\nu,c}_n|\geq 2^{n^{r-\epsilon}}$ whenever $n=\mu_i$,
\item $n^{(r-1)(c+o(1))n}\leq |\calP^{\nu,c}|<2^{n^{r-\epsilon}}$ if $n\neq \mu_i$.
\end{enumerate}
\end{Theorem}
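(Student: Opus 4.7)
The plan is a recursive construction of $\nu = (\nu_i)_{i \in \mathbb{N}}$. The key idea is to let $\mu_i = \nu_i - 1$ be the \emph{smallest} integer at which the finite-prefix speed exceeds the target bound $2^{n^{r-\epsilon}}$: this single choice will simultaneously deliver the lower bound required at $n = \mu_i$ in (2) and the upper bound required for $n < \mu_i$ in (3).

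Formally, set $\nu_0 = 0$ and $\nu^{(0)} := \emptyset$, and suppose $\nu_1 < \cdots < \nu_{i-1}$ have been constructed. Writing $\nu^{(i-1)} := (\nu_1,\ldots,\nu_{i-1})$, the finite prefix has $\sup \nu^{(i-1)} < \infty$, so Lemma~\ref{lem:Lemma9}(2) yields $|\calP^{\nu^{(i-1)},c}_n| \geq 2^{n^{r-\epsilon}}$ for every sufficiently large $n$. Define $\mu_i := \min\{n > \nu_{i-1} : |\calP^{\nu^{(i-1)},c}_n| \geq 2^{n^{r-\epsilon}}\}$ and $\nu_i := \mu_i + 1$. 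The restriction $n > \nu_{i-1}$ together with the existence guarantee from Lemma~\ref{lem:Lemma9}(2) ensures that $\mu_i$ exists and that $\nu_i > \nu_{i-1}$, so the recursion produces a strictly increasing infinite sequence.

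The verification hinges on the trivial observation that $\calP^{\nu,c}_n$ depends only on those $\nu_j$ with $\nu_j \leq n$; hence $\calP^{\nu,c}_n = \calP^{\nu^{(i-1)},c}_n$ for every $n < \nu_i$. Property~(1) is immediate from Lemma~\ref{lem:Lemma9}(1) applied to the infinite sequence $\nu$. Property~(2) follows from the identity $\calP^{\nu,c}_{\mu_i} = \calP^{\nu^{(i-1)},c}_{\mu_i}$ combined with the defining property of $\mu_i$. For property~(3), fix $n$ not of the form $\mu_j$ and let $i$ be the unique index with $\nu_{i-1} \leq n < \nu_i$. The lower bound is Lemma~\ref{lem:Lemma9}(1). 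For the upper bound, either $n = \nu_{i-1}$, in which case Lemma~\ref{lem:Lemma9}(1) gives $|\calP^{\nu,c}_n| = n^{(r-1)(c+o(1))n}$, which is below $2^{n^{r-\epsilon}}$ for large $n$ in the intended range $\epsilon < r-1$; or $\nu_{i-1} < n < \mu_i$, in which case the minimality in the definition of $\mu_i$ yields $|\calP^{\nu^{(i-1)},c}_n| < 2^{n^{r-\epsilon}}$ directly.

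The main technical content is packaged into Lemma~\ref{lem:Lemma9}, itself a hypergraph generalization of the corresponding graph statements from \cite{BBW2}. With that lemma in hand, the present argument is essentially combinatorial bookkeeping; the only genuine subtlety is to define $\mu_i$ as a \emph{minimum}, which is precisely the device that forces the speed to cross the threshold $2^{n^{r-\epsilon}}$ only at the prescribed points $\{\mu_i\}$, thereby producing the desired oscillation between the two asymptotic regimes.
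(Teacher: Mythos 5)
Your construction is exactly the one in the paper: recursively take $\mu_i$ to be the minimal integer past $\nu_{i-1}$ at which the speed of the finite-prefix property crosses $2^{n^{r-\epsilon}}$ (which exists by Lemma~\ref{lem:Lemma9}(2)), set $\nu_i=\mu_i+1$, and use the fact that $\calP^{\nu,c}_n$ only depends on the $\nu_j\le n$. The proposal is correct and in fact spells out the verification of (1)--(3) that the paper leaves implicit, including the (legitimate) caveat that the strict upper bound in (3) at $n=\nu_{i-1}$ requires $\epsilon<r-1$, the only regime in which the theorem is non-vacuous.
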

\begin{proof}
Set $\nu_0=r+1$. Assume now $k\geq 0$ and suppose by induction we have chosen $\nu_0,\ldots, \nu_k$.  Let $\nu=(\nu_1,\ldots, \nu_k)$ and note by Lemma \ref{lem:Lemma9}, $|\calP^{\nu,c}_n|\geq 2^{n^{r-\epsilon}}$ for large enough $n$.  Choose $\mu_{k}>\nu_k$ minimal so that $|\calP^{\nu,c}_{\mu_{k}}|\geq 2^{\mu_{k+1}^{r-\epsilon}}$ and set $\nu_{k+1}=\mu_{k}+1$.
\end{proof} 

\noindent {\bf Proof of Theorem \ref{thm:oscilate1}}  Let $\calL=\{R(x_1,\ldots, x_r)\}$.  Given $c\geq \frac{1}{r-1}$ and $\epsilon>1/c$, let $\nu=(\nu_i)_{i\in \mathbb{N}}$ and $\mu=(\mu_i)_{i\in \mathbb{N}}$ be sequences as in Theorem \ref{thm:oscilate}.  Let $T_{\nu,c}$ be the universal theory of $\calP^{\nu,c}$ and observe that class of models of $T_{\nu,c}$ is a hereditary $\calL$-property $\calH$ such that for arbitrarily large $n$, $|\calH_n|=n^{(r-1)(c+o(1))n}$ and for arbitrarily large $n$, $|\calH_n|\geq 2^{n^{r-\epsilon}}$. 
 \qed

\bibliographystyle{amsplain}


\begin{thebibliography}{10}

\bibitem{Alekseev1}
V.~E. Alekseev, \emph{Hereditary classes and coding of graphs}, Problemy
  Kibernet. (1982), no.~39, 151--164. \MR{694829}

\bibitem{ABBM}
Noga Alon, J{\'o}zsef Balogh, B{\'e}la Bollob{\'a}s, and Robert Morris,
  \emph{The structure of almost all graphs in a hereditary property}, J.
  Combin. Theory Ser. B \textbf{101} (2011), no.~2, 85--110. \MR{2763071
  (2012d:05316)}

\bibitem{BALOGH20061263}
J\'{o}zsef Balogh, B\'{e}la Bollob\'{a}s, and Robert Morris, \emph{Hereditary
  properties of partitions, ordered graphs and ordered hypergraphs}, European
  Journal of Combinatorics \textbf{27} (2006), no.~8, 1263 -- 1281, Special
  Issue on Extremal and Probabilistic Combinatorics.

\bibitem{JGT:JGT20266}
J\'{o}zsef Balogh, B\'{e}la Bollob\'{a}s, and Robert Morris, \emph{Hereditary
  properties of combinatorial structures: Posets and oriented graphs}, Journal
  of Graph Theory \textbf{56} (2007), no.~4, 311--332.

\bibitem{balogh2007hereditary}
J{\'o}zsef Balogh, B{\'e}la Bollob{\'a}s, and Robert Morris, \emph{Hereditary
  properties of tournaments}, the electronic journal of combinatorics
  \textbf{14} (2007), no.~1, 60.

\bibitem{BBW1}
J{\'o}zsef Balogh, B{\'e}la Bollob{\'a}s, and David Weinreich, \emph{The speed
  of hereditary properties of graphs}, J. Combin. Theory Ser. B \textbf{79}
  (2000), no.~2, 131--156. \MR{1769217}

\bibitem{BBW2}
\bysame, \emph{The penultimate rate of growth for graph properties}, European
  J. Combin. \textbf{22} (2001), no.~3, 277--289. \MR{1822715}

\bibitem{ajump}
J{\'o}zsef Balogh, B{\'e}la Bollob{\'a}s, and David Weinreich, \emph{A jump to
  the bell number for hereditary graph properties}, Journal of Combinatorial
  Theory, Series B \textbf{95} (2005), no.~1, 29 -- 48.

\bibitem{BNS}
J\'{o}zsef Balogh, Bhargav Narayanan, and Jozef Skokan, \emph{The number of
  hypergraphs without linear cycles}, arXiv:1706.01207 [math.CO], 2017.

\bibitem{BoTh1}
B{\'e}la Bollob{\'a}s and Andrew Thomason, \emph{Projections of bodies and
  hereditary properties of hypergraphs}, Bull. London Math. Soc. \textbf{27}
  (1995), no.~5, 417--424. \MR{1338683}

\bibitem{BoTh2}
B\'{e}la Bollob\'{a}s and Andrew Thomason, \emph{Hereditary and monotone
  properties of graphs}, The mathematics of Paul Erd{\"o}s, II, Algorithms
  Combin., vol.~14, Springer, Berlin, 1997, pp.~70--78. \MR{1425205}

\bibitem{Mac:growth}
Macpherson~H. D., \emph{Growth rates in infinite graphs and permutation
  groups}, Proceedings of the London Mathematical Society \textbf{s3-51},
  no.~2, 285--294.

\bibitem{FROSU}
V.~Falgas-Ravry, K.~O'Connell, J.~Str\"{o}mberg, and A.~Uzzell,
  \emph{Multicolour containers and the entropy of decorated graph limits},
  arXiv:1607.08152 [math.CO], 2016.

\bibitem{RR}
E.~Gy\"{o}ri, B.~Rothschild, and A.~Ruci\'{n}ski, \emph{Every balanced graph is
  contained in a sparsest possible balanced graph}, Math. Proc. Cambridge Phil.
  Soc. \textbf{98} (1985), 397--401.

\bibitem{HM:growth}
I.~M. Hodkinson and H.~D. Macpherson, \emph{Relational structures determined by
  their finite induced substructures}, Journal of Symbolic Logic \textbf{53}
  (1988), no.~1, 222-230.
  


\bibitem{KaKl}
T.~Kaizer and M.~Klazar, \emph{On growth rates of closed permutation classes},
  Electron. J. Combin. \textbf{9} (2002-3), no.~2, Research Paper 10, 20 pp.
  (electronic).

\bibitem{KLAZAR2007258}
Martin Klazar and Adam Marcus, \emph{Extensions of the linear bound in the F\"{u}redi-Hajnal conjecture}, Advances in Applied Mathematics \textbf{38}
  (2007), no.~2, 258 -- 266.
  
  \bibitem{oldversion}
M.~C. Laskowski and C.~Terry, \emph{Jumps in speeds of hereditary properties in finite relational languages}, 
arXiv:1803.10575 (v3) [math.CO], 2018.


\bibitem{ourpaper}
M.~C. Laskowski and C.~Terry, \emph{Uniformly bounded arrays and mutually
  algebraic structures}, Notre Dame Journal of  Formal Logic \textbf{61} (2020), no.~2, 265-282.

\bibitem{Laskowski2009}
Michael~C. Laskowski, \emph{The elementary diagram of a trivial, weakly minimal
  structure is near model complete}, Archive for Mathematical Logic \textbf{48}
  (2009), no.~1, 15--24.

\bibitem{laskowski2013}
\bysame, \emph{Mutually algebraic structures and expansions by predicates}, J.
  Symbolic Logic \textbf{78} (2013), no.~1, 185--194.

\bibitem{LM:growth}
Michael~C. Laskowski and Laura~L. Mayer, \emph{Stable structures with few
  substructures}, The Journal of Symbolic Logic \textbf{61} (1996), no.~3,
  985--1005.

\bibitem{macpherson_schmerl_1991}
Dugald Macpherson and James~H. Schmerl, \emph{Binary relational structures
  having only countably many nonisomorphic substructures}, Journal of Symbolic
  Logic \textbf{56} (1991), no.~3, 876--884.

\bibitem{matushkin}
Aleksandr Matushkin, \emph{Zero-one law for random uniform hypergraphs},
  arXiv:1607.07654 [math.Pr], 2016.

\bibitem{pikhurko}
Oleg Pikhurko, \emph{On possible {T}ur\'an densities}, Israel J. Math.
  \textbf{201} (2014), no.~1, 415--454. \MR{3265290}

\bibitem{JGT:JGT3190100214}
Andrzej Ruci\'{n}ski and Andrew Vince, \emph{Strongly balanced graphs and
  random graphs}, Journal of Graph Theory \textbf{10} (1986), no.~2, 251--264.

\bibitem{ZitoSch}
Edward~R. Scheinerman and Jennifer Zito, \emph{On the size of hereditary
  classes of graphs}, J. Combin. Theory Ser. B \textbf{61} (1994), no.~1,
  16--39. \MR{1275261}

\bibitem{schmerl_1990}
James~H. Schmerl, \emph{Coinductive $\aleph_0$-categorical theories}, Journal
  of Symbolic Logic \textbf{55} (1990), no.~3, 1130--1137.

\bibitem{classification}
S.~Shelah, \emph{Classification theory: and the number of non-isomorphic
  models}, Studies in Logic and the Foundations of Mathematics, Elsevier
  Science, 1990.
  
  \bibitem{hodges}
  Wilfred~Hodges, \emph{Model Theory}, Encyclopedia of Mathematics and its Applications, Cambridge University Press, 1993.

\bibitem{TERRY2018}
C.~Terry, \emph{Structure and enumeration theorems for hereditary properties in
  finite relational languages}, Annals of Pure and Applied Logic \textbf{169}
  (2018), no.~5, 413--449.

\bibitem{meVCcount}
C.~Terry, \emph{$\text{VC}_{\ell}$-dimension and the jump to the fastest speed
  of a hereditary $\mathcal{L}$-property}, Proc. Amer. Math. Soc. \textbf{146}
  (2018), 3111--3126.

\end{thebibliography}

\end{document}